\documentclass[12pt]{amsart}
\usepackage[utf8]{inputenc}
\pdfoutput=1
\usepackage{amsmath,amssymb,amsthm,amsfonts}
\usepackage{mathtools}%

\usepackage[english]{babel}%
\numberwithin{equation}{section}

\usepackage{hyperref}%
\usepackage{bbm}%
\usepackage{bm}%
\usepackage{mathrsfs}%
\usepackage{xparse}
\usepackage{xstring}

\usepackage{tikz,graphicx,color}
\usepackage{tikz-cd}%
\usepackage{tikz-3dplot}%
\usetikzlibrary{calc}%
\usetikzlibrary{arrows}%
\usetikzlibrary{shapes}%
\usetikzlibrary{patterns}%
\usetikzlibrary{positioning}%
\usetikzlibrary{arrows.meta}
\usetikzlibrary{decorations.markings}
\usetikzlibrary{knots}

\usepackage{epstopdf}%
\usepackage{blkarray}

\usepackage[arrow]{xy}%
\usepackage{diagbox}%
\usepackage{caption}
\usepackage{subcaption}
\usepackage{arcs}%
\usepackage{xcolor}%
\usepackage{xspace}

\usepackage{microtype}
\usepackage[margin=1.0in]{geometry}%

\usepackage{enumitem}
\usepackage{letltxmacro}
\usepackage{etoolbox}

\def\myarabic#1{\normalfont(\roman{#1})}
\newlist{theoremlist}{enumerate}{1}
\setlist[theoremlist]{label=\myarabic{theoremlisti},ref={\myarabic{theoremlisti}},itemindent=0pt,labelindent=0pt,
 leftmargin=*,noitemsep}

\makeatletter
\renewcommand{\p@theoremlisti}{\perh@ps{\thetheorem}}
\protected\def\perh@ps#1#2{\textup{#1#2}}
\newcommand{\itemrefperh@ps}[2]{\textup{#2}}
\newcommand{\itemref}[1]{\begingroup\let\perh@ps\itemrefperh@ps\ref{#1}\endgroup}

\newcommand\restr[2]{{
  \left.\kern-\nulldelimiterspace 
  #1 
  \vphantom{\big|} 
  \right|_{#2} 
  }}

\usepackage{nameref}
\usepackage[capitalize]{cleveref}

\newtheorem{theorem}{Theorem}[section]

\newtheorem{lemma}[theorem]{Lemma}
\newtheorem{proposition}[theorem]{Proposition}

\theoremstyle{definition}
\newtheorem{remark}[theorem]{Remark}

\newtheorem{example}[theorem]{Example}

\newtheorem{assumption}[theorem]{Assumption}



\newcommand{\Z}{\mathbb{Z}}
\newcommand{\C}{\mathbb{C}}
\newcommand{\R}{\mathbb{R}}

\def\G{\tilde{G}}
\def\Gtor{G}

\def\vertices{V}
\def\blackvertices{B}
\def\whitevertices{W}
\def\faces{F}
\def\edges{E}

\def\Pcomplex{\Sigma} 
\NewDocumentCommand{\cells}{mm}{%
  \IfSubStr{#1}{_}{\left(#1\right)_{#2}}{#1_{#2}}%
}

\definecolor{curveblue}{RGB}{61,153,204}

\newcommand{\wt}{\mathrm{wt}}
\newcommand{\val}{\operatorname{val}}
\newcommand{\ra}{\rightarrow}
\def\res{\operatorname{res}}

\def\divisor{D}

\def\casimir{C}

\def\Log{\operatorname{Log}}

\usepackage{mathtools}

\usetikzlibrary{arrows.meta,decorations.markings}

\crefname{figure}{Figure}{Figures}

\def\figref#1(#2){Figure~\hyperref[#1]{\ref*{#1}(#2)}}

\definecolor{calpolypomonagreen}{rgb}{0, 0.6, 0.2}

\newcounter{todofigure}

\def\Rpos{\R_{>0}}

\def\Puis{\mathbb K}

\def\Puispos{\Puis_{>0}}
\def\val{\operatorname{val}}

\def\trop{\dagger}

\def\init{\operatorname{init}}
\def\lc{\operatorname{lc}}

\def\imunit{{\mathrm{i}}}

\def\bw{\mathsf{w}}
\def\bb{\mathsf{b}}
\def\be{{\mathsf{e}}}
\def\bface{\mathsf{f}}

\def\dam{{\bf d}}

\def\torus{\mathbb{T}}

\def\bQ{\mathsf{Q}}
\def\bK{\mathsf{K}}

\def\onebf{\mathbf{1}}

\def\zzG{{{Z}(G)}}

\def\toricsurface{V_N^{\mathrm{tor}}}
\def\toricsurfacesmall{V^{\mathrm{tor}}}
\def\toricsurfacedegeneration{\mathcal V_\Subd^\mathrm{tor}}
\def\toricsurfacedegenerationfibert{\mathcal V_{\Subd}^\mathrm{tor}(t)}
\def\toricsurfacedegenerationfiberzero{\mathcal V_{\Subd}^\mathrm{tor}(\infty)}

\def\spectralcurve{\Sigma}
\def\openspectralcurve{{\Sigma^{\circ}}}

\def\degeneratecurve{\Sigma(\infty)}
\def\amoeba{\mathbb A}

\def\sym{\operatorname{Sym}}
\def\jac{{\operatorname{Jac}}}
\def\jacpos{{\operatorname{Jac}_{>0}}}

\def\divisor{D}

\def\vecRC{\bm{\Delta}}
\def\RC{{\Delta}}
\def\vecptjac{\bm{\xi}}

\def\veccasimirs{\bm{C}}

\tikzset{qvert/.style={draw,black,circle,fill=gray,minimum size=5pt,inner sep=0pt}  } 
\tikzset{bvert/.style={draw,circle,fill=black,minimum size=5pt,inner sep=0pt}  }  
\tikzset{gbvert/.style={draw, gray, circle,fill=gray,minimum size=5pt,inner sep=0pt}  } 
\tikzset{gvert/.style={draw,gray,circle,fill=white,minimum size=5pt,inner sep=0pt}  } 
\tikzset{wvert/.style={draw,circle,fill=white,minimum size=5pt,inner sep=0pt}  } 
\tikzset{fvert/.style={text=MidnightBlue}  } 
\tikzset{sqvert/.style={draw,black,rectangle,fill=black,minimum size=5pt,inner sep=0pt}  } 
\tikzset{lvert/.style={draw,circle,fill=black,minimum size=4pt,inner sep=0pt}  }  
\usetikzlibrary{arrows}
\tikzcdset{arrow style=tikz, diagrams={>={Stealth[round,length=4pt,width=4.95pt,inset=2.75pt]}}}
\tikzset{nvert/.style={draw,circle,fill=black,minimum size=3pt,inner sep=0pt}  }

\def\pt{{\bm{p}}} 

\def\ptR{{\bm{x}}} 

\def\ptC{{\bm{z}}} 
\def\ptZ{\bm{m}} 

\def\NZ{N_{\Z}}
\def\No{N^\circ}
\def\NZo{N^\circ_{\Z}}

\def\hei{c^{\trop}}

\def\Subd{\Upsilon}

\def\<{\langle}
\def\>{\rangle}
\def\wbsup{{\bw\bb}}
\def\wxsup_#1{{\bw\bb_{#1}}}

\def\Divtr{\divisor^{\trop}}
\def\Ctr{C^\trop}

\def\Pcomplex{\Sigma} 

\def\chisup^#1{\chi^{#1}}

\def\Eside{{E^\ast}}

\def\Nbd{N^\partial}
\def\divptgeom{\zeta}

\def\det{\operatorname{det}}

\def\Conv{\operatorname{conv}}

\def\Q{\mathbb{Q}}

\def\Latcap_#1{\mathbb{L}_{#1}}
\def\starof(#1){\operatorname{star}_{\Pcomplex}(#1)}
\def\normout_#1{[#1]^\perp_{\mathtt{out}}}
\def\normoutpar_#1{(#1)^\perp_{\mathtt{out}}}

\def\abeldiffbw_#1{\delta^{\wbsup}_{#1}}
\def\tor<#1,#2>{\<#1,#2\>_{\torus}}

\def\WEDGE_#1{(\bb_{#1}\xrightarrow{e_{2#1-1}}\bw_{#1}\xrightarrow{e_{2#1}}\bb_{#1+1})}

\def\GGKphi(#1){\hom[#1]}
\def\GGKrw_#1{r^\trop_{#1}}

\def\hom[#1]{[\mkern-3mu[#1]\mkern-3mu]}
\def\ptsign(#1){\varepsilon_{#1}}
\def\Pbb{\mathbb{P}}
\def\amoebaof#1{\amoeba_{#1}}
\def\amoebaSig{\amoebaof{\openspectralcurve}}

\def\ptCg{\vecptjac}
\def\ptSig{\zeta}
\def\ptSigT{\zeta^\trop}
\def\basept{{\ptSig_0}}
\def\baseptT{{\ptSig_0^\trop}}

\def\mcurve{\Sigma}
\def\tropmcurve{\mcurve^\trop}

\begin{document}
	
	\title{Tropicalization of Fock's inverse spectral transform}

  \author{Terrence George}
\address{
Tata Institute of Fundamental Research-Centre for Applicable Mathematics
}
\email{terrence@tifrbng.res.in}
	\date{}
	
\begin{abstract}
The dimer spectral transform gives algebro-geometric action-angle coordinates for cluster integrable systems. Fock gave an explicit formula for its inverse. We study Fock's inverse map in the tropical limit where Harnack spectral curves degenerate to nodal curves. We determine the leading order asymptotics of the classical objects appearing in Fock's formula and show that they are governed by their tropical counterparts. This yields a tropical inverse spectral transform on tropical spectral data. We show that Fock's inverse spectral transform commutes with tropicalization, and also derive a tropical version of Fay's trisecant identity.
\end{abstract}

	\maketitle

\section{Introduction}

 Goncharov--Kenyon~\cite{GK} constructed a remarkable class of integrable systems from periodic bipartite dimer models called cluster integrable systems. For these systems, the spectral transform of Kenyon--Okounkov~\cite{KO07b} identifies the phase space of weights on the dimer model with spectral data on a Harnack spectral curve, giving algebro-geometric action-angle coordinates. Fock~\cite{Fock} constructed an explicit inverse to the spectral transform in terms of theta functions and abelian differentials. 

In this paper, we study Fock's map in the tropical limit. Applying the spectral transform to a positive Puiseux-valued family of weights, we obtain a family of Harnack spectral curves degenerating to a nodal curve. This degeneration determines a tropical spectral curve together with tropical spectral data. Our main result is that Fock's map has a tropical limit which makes the following diagram commute:

\[
\begin{tikzcd}[column sep=huge,row sep=large]
\left\{
\text{Puiseux-valued weights}
\right\}
\arrow[r,"\text{spectral transform}"]
\arrow[d,"\operatorname{tropicalization}"']
&
\{
\text{family of spectral data}
\}
\arrow[d,"\operatorname{tropicalization}"]
\\
\{
\text{tropical weights}
\}
&
\{
\text{tropical spectral data}
\}
\arrow[l,"\text{tropical Fock's map}"]
\end{tikzcd}
\]

Fock's map is given by an explicit formula in terms of classical
objects on the spectral curve: the Abel map, the period matrix, the
theta function, the vector of Riemann constants, and normalized
differentials of the third kind. Our main technical contribution is to determine the leading asymptotics of these classical objects and show that they are governed by the corresponding tropical objects introduced by Mikhalkin--Zharkov~\cite{MZ}. The Harnack property of the spectral curves plays a crucial role by ensuring that the arguments of the theta functions appearing in Fock's formula are real. This implies that the terms in the series expansions of the theta functions are positive, and so there are no cancellations among leading terms.

As a second application of our asymptotic results, we tropicalize Fay's trisecant identity. Again, the Harnack property implies that the three terms in the identity have definite signs. Thus, the tropicalized identity takes the stronger form that a term is equal to the maximum of two other terms rather than saying that maximum of three terms is attained at least twice.

Although we consider tropical curves arising from embedded spectral curves in this paper, the underlying degeneration and asymptotic results admit a more intrinsic formulation. We may instead start with a planar abstract tropical curve and construct a corresponding family of degenerating M-curves by plumbing. Thus, many of the tropical-geometric results below are naturally statements about planar abstract tropical curves rather than embedded tropical curves arising from dimers.

Tropical limits of the dimer model on a torus have been studied by Ma in her thesis~\cite{Ma2015} and by Bocklandt~\cite{bocklandt2015dimerabc,BocklandtStrebel}. We have several motivations for studying this tropicalization. Firstly, it is a natural step toward understanding action-angle coordinates for tropical cluster
integrable systems. As shown by
Inoue--Lam--Pylyavskyy~\cite{InoueLamPylyavskyy}, periodic box-ball
systems are examples of tropical cluster integrable systems. The dynamics of such tropical integrable systems can be
linearized on the Jacobian of a tropical spectral curve and expressed
in terms of tropical theta functions. Tropical theta functions and tropical Fay's
trisecant identity play an
important role in this theory; see, for example,
Inoue--Takenawa~\cite{InoueTakenawaSpectral,InoueTakenawaFay,InoueTakenawaJacobian},
Inoue--Iwao~\cite{InoueIwao}, and the review of
Inoue--Kuniba--Takagi~\cite{InoueKunibaTakagi}. Related degenerations of Riemann theta functions have also been studied by Agostini--Çelik--Struwe--Sturmfels~\cite{thetasurfaces}, Ichikawa~\cite{ichikwaCMP} and Agostini--Fevola--Mandelshtam--Sturmfels~\cite{AFMS} in connection with tropical limits of KP solutions. In forthcoming work with Galashin~\cite{GalashinGeorge2}, we will study
the tropicalization of the forward spectral transform for cluster integrable
systems.

Secondly, Fock's weights have recently found several applications in the statistical mechanics of dimers; see Boutillier--Cimasoni--de Tili\`ere~\cite{BDdT}, Berggren--Borodin~\cite{Berggren-Borodin-Aztec}, Bobenko--Bobenko~\cite{Bob2}, Boutillier--de Tili\`ere~\cite{BoutillierTiliereAztec}, and Berggren--Borodin--George~\cite{BerggrenBorodinGeorge}. Tropicalization has proved to be a powerful tool in the analysis of dimer limit shapes; see Berggren--Borodin~\cite{Berggren-Borodin-tropical} and Berggren--Borodin--George~\cite{BerggrenBorodinGeorge}. In particular, the appendix of~\cite{BerggrenBorodinGeorge} shows that suitable tropical Fock weights can be used to realize dimer models on more general bipartite graphs within the Aztec diamond.

Thirdly, in George--Galashin~\cite{GalashinGeorge2024}, we initiated
the study of the boundary stratification of a compactification of the
space of spectral data inspired by Postnikov's totally nonnegative
Grassmannian~\cite{postnikov2006total}. The polyhedral structure of the
positive tropical Grassmannian is closely related to the stratification
of the totally nonnegative Grassmannian; see
Speyer--Williams~\cite{SpeyerWilliams1,SpeyerWilliams2} and
Arkani-Hamed--Lam--Spradlin~\cite{ArkaniHamedLamSpradlin}. We hope that
understanding the positive tropical points of spectral data will shed
some light on the corresponding stratification of the compactification
of the space of spectral data.

Finally, we mention a recent work of Ichikawa that is related to the present work. In~\cite{IchikawaDegenerating}, Fock's dimer model is studied for families of M-curves degenerating by pinching real ovals. This degeneration is, in a sense, complementary to the one considered here. 

\subsection{Main results}

Let $\Gtor$ be a minimal periodic bipartite graph. In \cite{KO07b}, Kenyon--Okounkov constructed the spectral transform, a map
\begin{align*}
 \lambda: \{\text{positive edge weights on $\Gtor$}\}/\text{gauge} & \to \{\text{spectral data associated to $\Gtor$}\}
\end{align*}
assigning to each choice $[\wt]$ of positive edge weights modulo gauge equivalence on $\Gtor$ a spectral datum consisting of a Harnack curve $\mcurve(t)$,  a standard divisor $\divisor(t)$, and an angle map $\nu(t)$ assigning a point at infinity of $\mcurve(t)$ to each
zig-zag path of $\Gtor$.

Fock~\cite{Fock} constructed an inverse to the spectral transform which we now describe. The notation used below will be defined precisely in \cref{sec:backgroudd}. At this stage, the reader should regard the
formulas as displaying the structure of Fock's inverse spectral
transform and its tropical analogue. The important point is that the
classical objects appearing in Fock's formula are replaced, term by
term, by their tropical counterparts.\footnote{Fock's formula is usually stated for edge weights and in terms of the prime form. We choose to state it using wedges because it allows us to express the resulting ratio of prime forms in terms of differentials of the third kind which admit a simple tropicalization. The tropicalization of the prime form appears in the physics literature in~\cite{Tourkine}.} 

A \emph{wedge} in $\Gtor$ is a pair of edges that share a black vertex. A wedge $\mathfrak w$ determines three zig-zag paths
$\alpha,\beta_-,\beta_+$ and two adjacent faces
$\bface_-,\bface_+$, as shown in \cref{fig:wedge}. We define the wedge weight 
\begin{equation}
\label{eq:fock-weight-intro}
\wt_{\mathrm{Fock}}(\mathfrak w)
=
-\frac{
\theta\left(
\mu_{\basept}(\dam(\bface_-))
-\vecptjac+\vecRC
\middle|
\Pi
\right)
}{
\theta\left(
\mu_{\basept}(\dam(\bface_+))
-\vecptjac+\vecRC
\middle|
\Pi
\right)
}
\exp\left(
\int_{\basept}^{\alpha}
\omega_{\beta_+-\beta_-}
\right).
\end{equation}
Here, $\dam$ is Fock's discrete Abel map,
$\mu_{\basept(t)}$ is the Abel map on $\mcurve(t)$,
$\Pi(t)$ is its period matrix, $\theta$ is the Riemann theta function,
$\vecRC(t)$ is the vector of Riemann constants, and
$\omega_{\beta_+(t)-\beta_-(t)}(t)$ is the normalized differential of
the third kind with residue divisor
$\beta_+(t)-\beta_-(t)$. 

Any cycle $\gamma$ in $\Gtor$ can be written as a sum of wedges,
$
\gamma
=
\mathfrak w_1+\cdots+\mathfrak w_r.
$
We assign to $\gamma$ a cycle weight
\begin{equation}\label{eq:fock_intro_cycle}
   [\wt_{\mathrm{Fock}}](t)(\gamma) := \pm \prod_{j=1}^r  \wt_{\mathrm{Fock}}(t)(\mathfrak w_j).
\end{equation}
These cycle weights determine a
gauge-equivalence class of the edge weights $[\wt_{\mathrm{Fock}}]$, and thus we obtain a map
\[
\lambda^{-1}_{\mathrm{Fock}}:
\{
\text{spectral data associated to $\Gtor$}
\}
\rightarrow
\{
\text{positive edge weights on $\Gtor$}
\}/\text{gauge}.
\]
Fock proved that $\lambda^{-1}_{\mathrm{Fock}}  = \lambda^{-1}$.

Now let $[\wt](t)$ be a positive Puiseux-valued family of edge weights
modulo gauge equivalence on $\Gtor$, and let
\[
[\wt^\trop]:=\val([\wt](t))
\]
be its tropicalization. Applying the spectral transform gives a family
of spectral data
\[
\lambda([\wt](t))
=
(
\mcurve(t),\divisor(t),\nu(t)
).
\]

\begin{figure}
\centering
\includegraphics[
    width=.82\textwidth
]{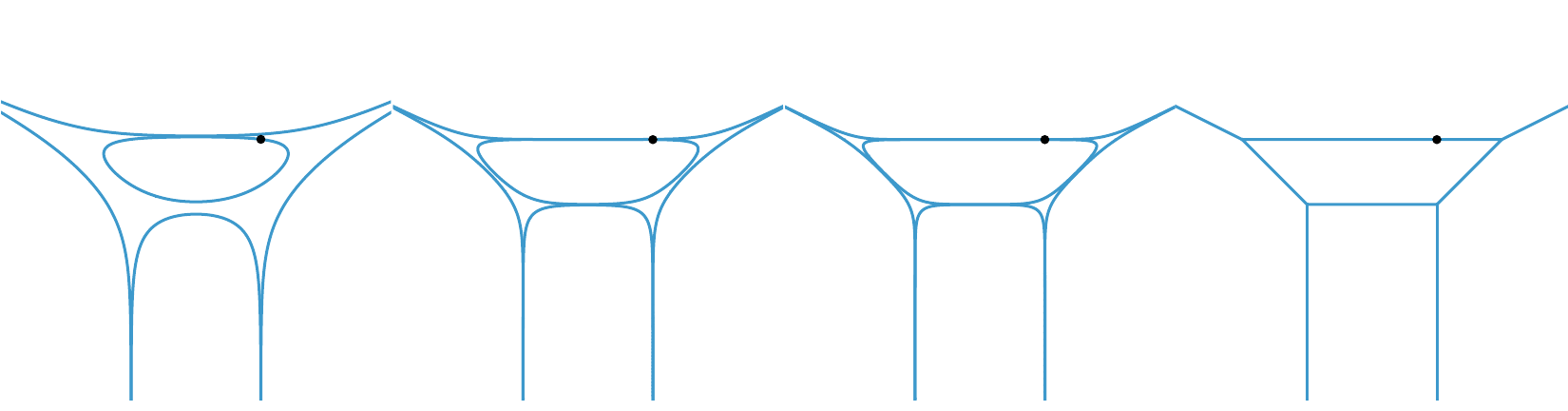}

\caption{
Convergence of the rescaled amoebas to the (embedded) tropical spectral
curve. The panels, from left to right and top to bottom, correspond to
$t=10$, $t=100$, $t=1000$, and the tropical limit. The black point is
the image of the standard divisor; its rescaled logarithmic image
converges to the corresponding point on the tropical spectral curve.
}
\label{fig:running-example-kapranov}
\end{figure}

\begin{figure}
\centering

\begin{minipage}[c]{0.42\textwidth}
\centering

\begin{tikzpicture}[
  xscale=0.58,
  yscale=1.08,
  >=Stealth,
  line cap=round,
  line join=round
]

\tikzset{
  Bfront/.style={
    line width=1.1pt,
  },
  Bouterfront/.style={
    line width=1.1pt,
  },
  Bback/.style={
    line width=1.1pt,
    dashed
  }
}

\filldraw[
  fill=gray!25,
  draw=black,
  line width=0.9pt,
  even odd rule
]

(5.00,0.28)
  ellipse[
    x radius=4.15,
    y radius=1.18
  ]

(4.12,0.40)
  -- (4.31,0.71)
  .. controls (4.49,0.91) and (4.78,0.97) .. (5.00,0.97)
  .. controls (5.22,0.97) and (5.51,0.91) .. (5.69,0.71)
  -- (5.88,0.40)
  .. controls (5.58,0.18) and (5.27,0.12) .. (5.00,0.12)
  .. controls (4.73,0.12) and (4.42,0.18) .. cycle;


\draw[Bouterfront]
  (0.85,0.28)
  arc[
    start angle=180,
    end angle=360,
    x radius=4.15,
    y radius=0.96
  ];

\draw[Bback]
  (9.15,0.28)
  arc[
    start angle=0,
    end angle=180,
    x radius=4.15,
    y radius=0.96
  ];


\draw[Bfront]
  (3.95,0.53)
  arc[
    start angle=180,
    end angle=0,
    x radius=1.05,
    y radius=0.52
  ];

\draw[Bback]
  (6.05,0.53)
  arc[
    start angle=0,
    end angle=-180,
    x radius=1.05,
    y radius=0.52
  ];

\end{tikzpicture}

\end{minipage}
%
\begin{minipage}[c]{0.10\textwidth}
\centering
\[
\xrightarrow{\;t\to\infty\;}
\]
\end{minipage}
%
\begin{minipage}[c]{0.44\textwidth}
\centering

\begin{tikzpicture}[
  xscale=0.68,
  yscale=1.13,
  >=Stealth,
  line cap=round,
  line join=round
]

\tikzset{
  surface/.style={
    fill=gray!25,
    draw=black,
    line width=0.9pt
  },
  front/.style={
    black,
    line width=0.9pt
  },
  back/.style={
    black,
    line width=0.9pt,
    dashed
  },
  hiddenequator/.style={
    black,
    line width=0.9pt,
    dashed
  },
  nodept/.style={
    circle,
    fill=black,
    inner sep=1.7pt
  }
}


\filldraw[surface]
  (2.00,0.65)
  ellipse[
    x radius=1.30,
    y radius=0.85
  ];

\draw[back]
  (0.70,0.65)
  arc[
    start angle=180,
    end angle=0,
    x radius=1.30,
    y radius=0.40
  ];

\draw[front]
  (0.70,0.65)
  arc[
    start angle=180,
    end angle=360,
    x radius=1.30,
    y radius=0.40
  ];

\filldraw[surface]
  (4.60,0.65)
  ellipse[
    x radius=1.30,
    y radius=0.85
  ];

\draw[back]
  (3.30,0.65)
  arc[
    start angle=180,
    end angle=0,
    x radius=1.30,
    y radius=0.40
  ];

\draw[front]
  (3.30,0.65)
  arc[
    start angle=180,
    end angle=360,
    x radius=1.30,
    y radius=0.40
  ];


\filldraw[surface]
  (2.00,-0.15)
  ellipse[
    x radius=1.30,
    y radius=0.85
  ];

\draw[back]
  (0.70,-0.15)
  arc[
    start angle=180,
    end angle=0,
    x radius=1.30,
    y radius=0.40
  ];

\draw[front]
  (0.70,-0.15)
  arc[
    start angle=180,
    end angle=360,
    x radius=1.30,
    y radius=0.40
  ];

\filldraw[surface]
  (4.60,-0.15)
  ellipse[
    x radius=1.30,
    y radius=0.85
  ];

\draw[back]
  (3.30,-0.15)
  arc[
    start angle=180,
    end angle=0,
    x radius=1.30,
    y radius=0.40
  ];

\draw[front]
  (3.30,-0.15)
  arc[
    start angle=180,
    end angle=360,
    x radius=1.30,
    y radius=0.40
  ];


\draw[hiddenequator]
  (0.70,0.65)
  arc[
    start angle=180,
    end angle=360,
    x radius=1.30,
    y radius=0.40
  ];

\draw[hiddenequator]
  (3.30,0.65)
  arc[
    start angle=180,
    end angle=360,
    x radius=1.30,
    y radius=0.40
  ];


\node[nodept] at (3.30,0.65) {};
\node[nodept] at (3.30,-0.15) {};
\node[nodept] at (2.00,0.25) {};
\node[nodept] at (4.60,0.25) {};

\end{tikzpicture}

\end{minipage}

\caption{
A degeneration of a smooth genus-one curve to a nodal curve consisting
of four genus-zero components.
}
\label{fig:genus-one-degeneration}
\end{figure}

\begin{figure}
\centering
\begin{tikzpicture}[scale = 1.3]
  \def\ep{0.3}
  \def\rc{7}
  \def\lw{1pt}
  \def\r{1}

  \fill[black!5] (0,0) circle (\r cm);

  \coordinate[bvert, label=above:${\bb}$] (w) at (90:\r);
  \coordinate[wvert, label=right:${\bw_-}$] (b1) at (0:\r);
  \coordinate[wvert, label=left:${\bw_+}$] (b2) at (180:\r);

  \path let \p1 = (b1), \p2 = (w) in
    coordinate (m1) at ($(\p1)!.5!(\p2)$);
  \path let \p1 = (b2), \p2 = (w) in
    coordinate (m2) at ($(\p1)!.5!(\p2)$);

  \node[below=2pt] at (m1) {${\be_-}$};
  \node[below=2pt] at (m2) {${\be_+}$};

  \node at (40:1.2) {${\bface_-}$};
  \node at (140:1.2) {${\bface_+}$};

  \draw[-] (b1) -- (w) -- (b2);

  \draw[red,<-,line width=\lw,rounded corners=\rc]
    (1,\ep) -- (m1) -- (m2) -- (-1,\ep);

  \draw[green!80!black,<-,line width=\lw,rounded corners=\rc]
    (0.3,1.05) -- (m1) -- (1,-\ep);

  \draw[blue,->,line width=\lw,rounded corners=\rc]
    (-0.3,1.05) -- (m2) -- (-1,-\ep);
\end{tikzpicture}

\caption{
A wedge in $\Gtor$ defined by two edges
$\be_-=\bb\bw_-$ and $\be_+=\bb\bw_+$ sharing a black vertex $\bb$.
The zig-zag paths $\alpha$, $\beta_-$, and $\beta_+$ are drawn in
red, green, and blue respectively.
}
\label{fig:wedge}
\end{figure}

As $t\to\infty$, the rescaled amoebas of $\mcurve(t)$ converge to an
embedded tropical spectral curve
$\tropmcurve_{\mathrm{emb}}$; see
\cref{fig:running-example-kapranov}. More geometrically, the family $\mcurve(t)$ together with the angle map
forms a family of pointed curves degenerating to a stable nodal curve
$\mcurve(\infty)$ whose irreducible components are themselves Harnack
curves; see \cref{fig:genus-one-degeneration}. Such degenerations were
also studied by Olarte~\cite{Olarte} in his construction of the
compactification of the moduli space of Harnack curves. Associated to $\tropmcurve_{\mathrm{emb}}$ is an abstract tropical
curve
\[
(\tropmcurve,\ell,\bm g).
\]
Its underlying graph $\tropmcurve$ is obtained from
$\tropmcurve_{\mathrm{emb}}$ by contracting the unbounded edges and
replacing edges of multiplicity greater than one by parallel edges, and
records the combinatorics of the degeneration. The edge lengths $\ell$
record the rates at which the corresponding nodes are smoothed, while
the vertex genera $\bm g$ record the genera of the irreducible
components of $\mcurve(\infty)$.

The standard divisor tropicalizes to an effective divisor
$
\divisor^\trop
$
on $\tropmcurve$ (\cref{fig:running-example-kapranov}) and the angle map tropicalizes to a tropical angle map
$
\nu^\trop:\zzG\rightarrow\vertices(\tropmcurve).
$
Thus, the family of spectral data determines a tropical spectral datum
$
(
(\tropmcurve,\ell,\bm g),
\divisor^\trop,
\nu^\trop
).
$

Our main technical results determine the leading asymptotics of every
classical object appearing in \eqref{eq:fock-weight-intro}. We obtain
the asymptotics of the period matrix and Abel map using the results of
Hu--Norton~\cite{HuNorton}, and also determine the asymptotics of the
normalized differentials of the third kind, the vector of Riemann
constants, and the theta function. Their leading terms are governed by
the corresponding tropical objects.

The Harnack property is particularly important for the theta function:
the theta arguments appearing in \eqref{eq:fock-weight-intro} are real,
so the terms of the theta series are positive and no cancellation can
occur among terms of maximal order.

Combining these asymptotics gives a tropical version of Fock's formula:
\[
\begin{aligned}
\wt_{\mathrm{Fock}}^\trop(\mathfrak w)
:={}&
\theta^\trop\left(
\mu_{\baseptT}^{\trop}
    (\dam^\trop(\bface_-))
-\mu_{\baseptT}^{\trop}
    (\divisor^\trop)
+\vecRC^{\trop}
\middle|
\Pi^{\trop}
\right)
\\
&-
\theta^\trop\left(
\mu_{\baseptT}^{\trop}
    (\dam^\trop(\bface_+))
-\mu_{\baseptT}^{\trop}
    (\divisor^\trop)
+\vecRC^{\trop}
\middle|
\Pi^{\trop}
\right)
\\
&-
\int_{\baseptT}^{\alpha^\trop}
\omega_{\beta_+^\trop-\beta_-^\trop}^\trop.
\end{aligned}
\]
Here and throughout, we use a $\dagger$ in the superscript to denote the tropical counterpart of a classical object. If
$
\gamma=\mathfrak w_1+\cdots+\mathfrak w_r
$
is a cycle, set
\begin{equation}
\label{eq:tropical-fock-weight-intro}
[\wt_{\mathrm{Fock}}^\trop](\gamma)
:=
\sum_{j=1}^r
\wt_{\mathrm{Fock}}^\trop(\mathfrak w_j).
\end{equation}
These cycle weights determine a tropical gauge-equivalence class of
edge weights. Thus, we obtain a map
\[
(\lambda^{-1}_{\mathrm{Fock}})^\trop:
\{
\text{tropical spectral data associated to $\Gtor$}
\}
\rightarrow
\{
\text{tropical edge weights on $\Gtor$}
\}/\text{gauge}.
\]

Our first main result is:

\begin{theorem}[cf.~\cref{thm:fock_weight_tropicalization}]
\label{thm:main}
Let $[\wt](t)$ be $\Puispos$-valued edge weights modulo gauge
equivalence satisfying a genericity condition (Assumption~\ref{ass:generic-leading-coefficients}), and let
$
\lambda([\wt](t))
=
(
\mcurve(t),\divisor(t),\nu(t)
).
$
Let
$
(
(\tropmcurve,\ell,\bm g),
\divisor^\trop,
\nu^\trop
)
$
be the corresponding tropical spectral datum. Then,
\[
(\lambda^{-1}_{\mathrm{Fock}})^\trop
(
(\tropmcurve,\ell,\bm g),
\divisor^\trop,
\nu^\trop
)
=
[\wt^\trop].
\]
In other words, tropicalization commutes with Fock's inverse spectral
transform.
\end{theorem}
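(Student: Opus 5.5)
Since $\lambda^{-1}_{\mathrm{Fock}}=\lambda^{-1}$ by Fock's theorem, we have $[\wt](t)=[\wt_{\mathrm{Fock}}](t)$ as classes of Puiseux-valued weights. Gauge classes are determined by cycle weights, so it suffices to show, for every cycle $\gamma=\mathfrak w_1+\cdots+\mathfrak w_r$, that
\[
\val\bigl([\wt_{\mathrm{Fock}}](t)(\gamma)\bigr)=\sum_{j=1}^r \wt^\trop_{\mathrm{Fock}}(\mathfrak w_j).
\]
Valuation turns products into sums, and the global sign $\pm$ is irrelevant. So the statement reduces to a wedge-by-wedge identity: for each wedge $\mathfrak w$, the leading exponent as $t\to\infty$ of $\log|\wt_{\mathrm{Fock}}(t)(\mathfrak w)|$ (normalized by $\log t$) equals $\wt^\trop_{\mathrm{Fock}}(\mathfrak w)$. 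I will also check that the cancellations of non-leading, non-tropical constants between wedges are consistent. This is automatic once each wedge's valuation is identified.

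For a single wedge I will treat the two factors of \eqref{eq:fock-weight-intro} separately.

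\emph{Exponential factor.} I will apply the asymptotic result for normalized differentials of the third kind. This says that $\int_{\basept}^{\alpha}\omega_{\beta_+-\beta_-}$ equals $-\log t\cdot\int_{\baseptT}^{\alpha^\trop}\omega^\trop_{\beta^\trop_+-\beta^\trop_-}+O(1)$. The points $\alpha,\beta_\pm$ are the angle-map images of the zig-zag paths, so their tropical positions are given by $\nu^\trop$. Harnack reality makes the integral real modulo $\pi\imunit$, so its real part gives the valuation.

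\emph{Theta ratio.} I will use the asymptotics of the period matrix and Abel map (Hu--Norton), in the form $\Pi(t)\approx \frac{\imunit\log t}{\pi}\Pi^\trop$ plus bounded terms. I will also use the tropicalizations of $\vecRC(t)$, of the Abel image of the discrete Abel map $\dam(\bface_\pm)$, and of $\vecptjac=\mu(\divisor(t))$. These show that the argument $\mu_{\basept}(\dam(\bface_\pm))-\vecptjac+\vecRC$ has imaginary part of order $\log t$, asymptotic to the tropical vector in the statement, and a bounded real part. The theta series then has terms $\exp(\log t\cdot(-\tfrac12 n^T\Pi^\trop n+n^T z^\trop)+O(1))$. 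Here the Harnack property enters: the arguments are real in the appropriate normalization, so every term is positive. The logarithm of the sum is therefore $\log t\cdot\theta^\trop(z^\trop\mid\Pi^\trop)+O(1)$, because the maximum is attained by finitely many terms and no cancellation occurs. A tail estimate using positive definiteness of $\Pi^\trop$ on the relevant lattice handles the remaining terms. Taking the difference for $\bface_-$ and $\bface_+$ gives the theta part of $\wt^\trop_{\mathrm{Fock}}$.

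\emph{Main obstacle.} The hardest step is the theta asymptotic when $\tropmcurve$ has vertices of positive genus, or when the period matrix degenerates only partially. In that case $\Pi^\trop$ is merely semidefinite on $H_1$ of the full curve. The bounded block then contributes a factor given by the theta function of the limiting components on $\mcurve(\infty)$. I need this factor to be nonzero and to have bounded logarithm. Here I would invoke the genericity Assumption~\ref{ass:generic-leading-coefficients} together with positivity: a theta function of a real argument on a Harnack component, in the positive normalization, is a sum of positive terms and hence nonzero. Finally, I must ensure that the choices of basepoints and of the splitting of cycles into wedges agree between the classical and tropical sides. I would check this by verifying that the tropical wedge weights transform in the same way under changes of base point, since the dependence cancels in cycle sums.
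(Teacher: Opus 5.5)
Your plan follows the paper's proof. Fock's theorem and gauge invariance reduce the claim to the wedge estimate $\log|\wt_{\mathrm{Fock}}(t)(\mathfrak w)|=\wt^\trop_{\mathrm{Fock}}(\mathfrak w)\log t+O(1)$. The exponential factor then comes from \cref{prop:trop_differential_third_kind}, and the theta ratio from \cref{cor:abelmap}, \cref{prop:riemann_constants_trop} and \cref{lem:theta_tropicalization}, with Harnack reality making every term of the theta series positive.

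Your treatment of the semidefinite leading term is correct and more careful than the paper's one-line no-cancellation argument. Positivity gives the lower bound. The upper bound needs a tail estimate using $\Pi^{\mathrm{deg},\trop}>0$ in the $\bm l$-directions and the negative-definite bounded component block in the $\bm k$-directions, with the $O(1)$ cross terms absorbed. Note that positivity alone, not Assumption~\ref{ass:generic-leading-coefficients}, is what makes the component factor nonzero. Two things need fixing.

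\emph{Bases and normalization.} Every asymptotic result you invoke (period matrix, Abel map, third-kind integrals, \cref{prop:riemann_constants_trop}) is stated in the degeneration-adapted basis. Fock's formula and the reality statements of \cref{prop:omega_and_delta_pos} are in the oval-adapted basis, and you never connect the two. The paper uses $\bm B^{\mathrm{deg}}(t)=M_T\bm B^{\mathrm{ov}}(t)$ and $\bm A^{\mathrm{ov}}(t)=M_T^\top\bm A^{\mathrm{deg}}(t)$ with $M_T\in\operatorname{GL}_g(\Z)$. Under this change, theta values are unchanged after reindexing the lattice sum, the $A$-normalized third-kind differential is unchanged, and real theta arguments stay real. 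Also, with $\int_{\bm A}\bm\omega=2\pi\imunit I$ one has $\Pi^{\mathrm{deg}}(t)=-\operatorname{diag}(0,\Pi^{\mathrm{deg},\trop})\log t+O(1)$, which is real, and the theta arguments are real. In your $\tau$-normalization the ``bounded real part'' of the argument must therefore be identically zero. A nonzero one would introduce oscillating phases and destroy the positivity you rely on.

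\emph{Basepoints.} Your proposed check that basepoint dependence cancels in cycle sums is false. Replacing $\basept$ by $\basept'$ leaves the theta arguments unchanged: the shifts coming from $\deg\dam(\bface_\pm)=1$, $\deg\divisor=g$ and $\vecRC\mapsto\vecRC-(g-1)\mu_{\basept}(\basept')$ cancel. However, the product of the exponential factors over $\gamma$ is multiplied by $\exp\bigl(\int_{\basept'}^{\basept}\omega_{D_\gamma}\bigr)$, where $D_\gamma=\sum_j(\beta_{+,j}-\beta_{-,j})$ is the quasi-period of $\dam$ along $\gamma$. This divisor vanishes for face boundaries but not for cycles with nonzero homology class, and the tropical weights change in the same way. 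So the classical and tropical basepoints cannot be chosen independently. Do what the paper does. Take $\basept(t)$ on the outer oval in the core of a component $\mcurve_v$ and set $\baseptT=v$. Take every classical integration path (Abel map, Riemann constants, third-kind integral) to be a degeneration-adapted lift of the unique path in $\tropmcurve_{\mathrm{cut}}$. Then each classical term tropicalizes to the corresponding term of \eqref{eq:tropical_fock_weight}.
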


Our second main result is a tropicalization of Fay's trisecant identity~\cite{Fay}. 

\begin{theorem}[cf. \cref{thm:tropical_fay}]
Let
$
\ptSig^\trop_1,\ptSig^\trop_3,\ptSig^\trop_2,\ptSig^\trop_4
$
be four vertices in cyclic order along the outer face of $\tropmcurve$. Let $\vecptjac^\trop \in \R^{g^\trop}$, let $\bm \eta_{ij}^\trop
:=\mu_{\baseptT}^{\trop}(\ptSig_j^\trop-\ptSig_i^\trop)$, and let 
\[
\begin{aligned}
F_1^\trop
:={}&
\theta^\trop(
\vecptjac^\trop+\bm \eta_{13}^\trop
\mid
\Pi^{\trop}
)
+
\theta^\trop(
\vecptjac^\trop+\bm \eta_{24}^\trop
\mid
\Pi^{\trop}
)
-
\int_{\ptSig_1^\trop}^{\ptSig_3^\trop}
\omega_{\ptSig_2^\trop-\ptSig_4^\trop}^{\trop},
\\
F_2^\trop
:={}&
\theta^\trop(
\vecptjac^\trop+\bm \eta_{23}^\trop
\mid
\Pi^{\trop}
)
+
\theta^\trop(
\vecptjac^\trop+\bm \eta_{14}^\trop
\mid
\Pi^{\trop}
)
-
\int_{\ptSig_2^\trop}^{\ptSig_3^\trop}
\omega_{\ptSig_1^\trop-\ptSig_4^\trop}^{\trop},
\\
F_3^\trop
:={}&
\theta^\trop(
\vecptjac^\trop
\mid
\Pi^{\trop}
)
+
\theta^\trop(
\vecptjac^\trop
+
\bm \eta_{13}^\trop
+
\bm \eta_{24}^\trop
\mid
\Pi^{\trop}
).
\end{aligned}
\]
Then,
\[
F_3^\trop
=
\max
\{
F_1^\trop,
F_2^\trop
\}.
\]
\end{theorem}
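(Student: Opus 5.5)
The plan is to tropicalize the classical Fay trisecant identity along a degenerating family of Harnack (M-)curves realizing the given tropical curve. First, realize $(\tropmcurve,\ell,\bm g)$ by a family $\mcurve(t)$ of Harnack curves degenerating as in the main setting. Choose points $\ptSig_1(t),\ptSig_3(t),\ptSig_2(t),\ptSig_4(t)$ on the real oval at infinity specializing to the given vertices; these can be taken to be points at infinity attached to the corresponding components, in the same cyclic order. Then choose $\vecptjac(t)$ real with $\val$-type leading behaviour $\vecptjac^\trop$, i.e. $\vecptjac(t)\sim \vecptjac^\trop\log t$ in the scaling convention where $\Pi(t)\sim \imunit\,\Pi^\trop\log t/(2\pi)$.

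Fay's identity in its cross-ratio form reads
\[
\theta(\bm\xi+\bm\eta_{13})\theta(\bm\xi+\bm\eta_{24})E_{13}E_{24}+\theta(\bm\xi+\bm\eta_{23})\theta(\bm\xi+\bm\eta_{14})E_{14}E_{23}=\theta(\bm\xi)\theta(\bm\xi+\bm\eta_{13}+\bm\eta_{24})E_{12}E_{34}.
\]
Dividing by the prime-form product on the right, the ratios of prime forms become exponentials of integrals of normalized third-kind differentials, namely $\exp\int_{\ptSig_1}^{\ptSig_3}\omega_{\ptSig_2-\ptSig_4}$ and its analogue. This is exactly the mechanism used for the wedge weights, and it is where the $\omega^\trop$ terms in $F_1^\trop,F_2^\trop$ come from. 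The identity thus becomes $A_1(t)+A_2(t)=A_3(t)$, with each $A_i$ a product of two theta values and a third-kind exponential.

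Next I check signs. Because the four points lie on the real oval in the prescribed cyclic order and the curve is an M-curve, the Abel images $\bm\eta_{ij}$ of points on the same real oval are real modulo the appropriate half-periods. Hence every theta argument is real and each theta value is a sum of positive terms, exactly as the introduction explains. The real third-kind integrals along the oval have a definite sign determined by the cyclic order, so that, after fixing the sign convention, $A_1,A_2,A_3>0$. This positivity is what upgrades the usual tropical statement, in which the maximum is attained twice, to the equality $F_3^\trop=\max\{F_1^\trop,F_2^\trop\}$. Once positivity holds, I apply the earlier asymptotic results for the theta function (leading order $\theta^\trop$, by positivity of the series), the Abel map (Hu--Norton), and the third-kind differentials. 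These give $\log A_i(t)=F_i^\trop\log t+o(\log t)$ under the normalization fixed earlier. For positive quantities with $A_1+A_2=A_3$, taking $\lim \log(\cdot)/\log t$ yields $F_3^\trop=\max\{F_1^\trop,F_2^\trop\}$.

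The main obstacle is the sign bookkeeping. One has to confirm that with the cyclic order $1,3,2,4$ the three terms really all have the same sign, rather than one term being the difference of the other two. This requires tracking the half-period characteristics and the orientation of the integrals along the real oval. I would try first to verify it in genus zero, where theta is $1$ and the identity reduces to the positive Plücker relation for four points on $\R\mathbb P^1$ in that cyclic order. I would then argue by continuity in the real M-curve family, since the terms never vanish because theta is positive on real arguments. A secondary point is ensuring that the realizing family and the choices of $\vecptjac(t)$ exist for arbitrary real $\vecptjac^\trop$. This is handled by taking $\vecptjac(t)=\vecptjac^\trop\log t$ directly, as the theta asymptotics hold uniformly for real arguments of this scaling.
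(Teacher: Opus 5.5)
Your overall route is the paper's. You realize $\tropmcurve$ by the degenerating Harnack family, place real points $\ptSig_i(t)$ on the outer oval $B_0^{\mathrm{ov}}(t)$ in the given cyclic order, and take $\vecptjac(t)$ real. You then show that all three terms of Fay's identity (in the third-kind form of \cref{prop:fay_third_kind}) are positive, and read off $F_3^\trop=\max\{F_1^\trop,F_2^\trop\}$ from the theta, Abel-map and third-kind asymptotics.

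The problem is the step you call the main obstacle. The sign of the third-kind integral is irrelevant; only its reality matters, and reality is immediate. The degeneration-adapted $A$-cycles (component $A$-cycles, and seam circles with conjugation-compatible $z_h$) are anti-invariant under $\sigma$, and the residue divisor $\ptSig_2-\ptSig_4$ is real. So by uniqueness $\overline{\sigma^*\omega}=\omega$ for $\omega=\omega^{\mathrm{deg}}_{\ptSig_2(t)-\ptSig_4(t)}(t)$, i.e.\ $\omega$ is real on the real locus. The cyclic order $1,3,2,4$ is used only to guarantee an arc of $B_0^{\mathrm{ov}}(t)$ from $\ptSig_1(t)$ to $\ptSig_3(t)$ (resp.\ from $\ptSig_2(t)$ to $\ptSig_3(t)$) that contains no pole. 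The integral along that arc is then a finite real number, and its exponential is positive. This is exactly the paper's argument. For a bad order the arc would pass through a simple pole, and detouring around it costs $e^{\pm\pi\imunit}=-1$; that is the sign change you would see in genus zero. Your substitute argument (verify in genus $0$, then continue) does not work. Smooth M-curves of genus $g$ cannot be deformed to genus $0$ through smooth curves. Moreover, nonvanishing of the terms only gives a constant sign once reality is known, which is the very point at issue.

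Three further corrections. First, in the normalization you quote, $\Pi(t)\sim\imunit\Pi^\trop\log t/(2\pi)$, real theta arguments make the series oscillate. You need the paper's normalization $\int_{\bm A}\bm\omega=2\pi\imunit I$, in which $\Pi^{\mathrm{deg}}(t)$ is real, with $\vecptjac(t)=-(0,\vecptjac^\trop)^{\top}\log t+O(1)$ real. Second, points at infinity are not available in general. A vertex on $\partial f_0$ need not be incident to an unbounded edge, since its dual polygon may meet $\partial N$ only at a vertex. Also, the $\ptSig_i^\trop$ are allowed to coincide. Instead take distinct real points $\ptSig_i(t)=\Phi_{v_i}(\ptSig_i;t^{-1})$ with $\ptSig_i$ in the cores on the outer oval; this is also the hypothesis of \cref{prop:trop_differential_third_kind}. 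Third, the quantities $\bm\eta_{ij}^\trop$ and the tropical third-kind integrals depend on the spanning tree and on the paths. The paper takes $T\supseteq\partial f_0\setminus\{e_0\}$ with $e_0$ on the arc from $\ptSig_4^\trop$ to $\ptSig_1^\trop$. Then the cut-curve paths are arcs of $\partial f_0$ and lift to arcs of $B_0^{\mathrm{ov}}(t)$. This single choice makes $\bm\eta_{ij}(t)$ real, keeps the lifts in Fay's identity coherent, and ensures the classical paths tropicalize to those defining $F_i^\trop$.
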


Tropical versions of Fay's identity were first proved by Inoue--Takenawa~\cite{InoueTakenawaFay} and later generalized by Inoue--Iwao~\cite{InoueIwao2011} to smooth planar tropical curves. Our formulation allows general planar tropical curves. However, unlike their formulations, we restrict the points appearing in the identity to the boundary of the outer face. This restriction implies positivity of the terms appearing in Fay's identity and therefore removes the sign data appearing in the formulations of \cite{InoueTakenawaFay,InoueIwao2011}.

\subsection{Organization of the paper}

In \cref{sec:backgroudd}, we recall the dimer spectral transform and
Fock's inverse in terms of spectral data on Harnack curves. In \cref{sec:trp_spec_curve}, we associate embedded and abstract tropical spectral
curves to positive Puiseux-valued edge weights and introduce the
tropical differentials, Abel map, and Jacobian that will appear in the
tropicalization of Fock's formula. In \cref{sec:plumbing}, we describe
the degeneration of the spectral curve, first as a toric degeneration
and then in plumbing coordinates, and construct the
degeneration-adapted paths and symplectic basis used below.
In \cref{sec:asymptotics}, we determine the asymptotics of the period
matrix, the Abel map, differentials of the third kind, the vector of Riemann
constants, and theta function. In \cref{sec:tropical_fock_weights}, we combine these results to tropicalize Fock's inverse spectral transform and prove \cref{thm:main}. Finally, in \cref{sec:tropical_fay}, we apply the asymptotic results to obtain a tropical version of Fay's trisecant identity.

\subsection*{Acknowledgements}

This work grew out of a joint project with Pavel Galashin, and the author is grateful to him for many stimulating discussions. Part of this paper was written while the author was visiting the Indian Statistical Institute, Bangalore, and the author thanks the institute for its hospitality. The author also thanks Tomas Berggren, Alexei Borodin and David Speyer for helpful discussions.

\section{Dimer spectral data and Fock weights} \label{sec:backgroudd}

\subsection{Bipartite graphs on a torus}
\label{sec:bip_graph_torus}

Let $\Gtor \subset \torus:=\R^2/\Z^2$ be a bipartite graph embedded such that every face is an open disk, and let $\G \subset \R^2$ be the corresponding periodic graph in the plane. 

An \emph{edge weight} on $\Gtor$ is a function $\wt:\edges(\Gtor) \to \Rpos$.  
Two edge weights $\wt_1$ and $\wt_2$ are \emph{gauge equivalent} if there exists a function 
\[
f:\blackvertices(\Gtor) \sqcup \whitevertices(\Gtor) \rightarrow \Rpos
\]
such that
\[
\wt_2(\bb \bw) = f(\bb)^{-1} \wt_1(\bb \bw) f(\bw) \qquad \text{for all $\bb \bw \in \edges(\Gtor)$}.
\]
The gauge equivalence class of $\wt$ is denoted by $[\wt]$.

For every cycle 
$\gamma = (\bb_1 \xrightarrow{\be_1} \bw_1 \xrightarrow{\be_2} \bb_2 \xrightarrow{\be_3} \cdots \xrightarrow{\be_{2k-2}} \bb_k \xrightarrow{\be_{2k-1}} \bw_k \xrightarrow{\be_{2k}} \bb_{k+1}=\bb_1)$ in $\Gtor$, define the \emph{cycle weight}
\begin{equation}\label{eq:wt_L_dfn}
[\wt](\gamma) := 
\frac{\wt(\be_1)\cdot \wt(\be_3)\cdots \wt(\be_{2k-1})}{\wt(\be_2)\cdot\wt(\be_4) \cdots \wt(\be_{2k})}.
\end{equation}

When $\gamma = \partial \bface$ is the (counterclockwise-oriented) boundary of a face $\bface \in \faces(\Gtor)$, we write
\[
X_{\bface}:= [\wt](\partial \bface)
\]
for the corresponding \emph{face weight}. The face weights $(X_{\bface})_{\bface \in \faces(\Gtor)}$ are the \emph{$X$-cluster variables} in the sense of Fock--Goncharov~\cite{FoGo_cluster}, or \emph{$y$-cluster variables} in the sense of Fomin--Zelevinsky~\cite{FZ}. The face weights together with the weights $\rho_z,\rho_w$ of two cycles $\gamma_z, \gamma_w$ that generate the homology $H_1(\torus,\Z) \cong \Z^2$ of the torus provide coordinates on the space of edge weights modulo gauge equivalence.

\subsection{Zig-zag paths, minimality and Newton polygon} \label{sec:NewtonPolygon}

A \emph{zig-zag path} in $\Gtor$ or $\G$ is an oriented path that turns maximally left at white vertices and maximally right at black vertices. We denote the set of zig-zag paths in $\Gtor$ by $\zzG$.

We say that $\Gtor$ is \emph{minimal} if, in $\G$,
\begin{itemize}
\item No zig-zag path is a closed loop.
\item No zig-zag path has a self-intersection.
\item No two zig-zag paths have two edges $\tilde \be_1 \neq \tilde \be_2$ in common, with both oriented from $\tilde \be_1$ to $\tilde \be_2$.
\end{itemize}
Hereafter, we restrict attention to minimal graphs.  

Every zig-zag path $\alpha \in \zzG$ is a closed cycle in $\Gtor$ and therefore determines a homology class $[\alpha] \in H_1(\torus,\Z) \cong \Z^2$. The
unique convex lattice polygon $N \subset \R^2$, defined modulo translation by $\Z^2$, whose sides consist of the
vectors $\{[\alpha] \in \Z^2 \mid \alpha \in \zzG\}$ in counterclockwise cyclic order is called the \emph{Newton polygon}
of $\Gtor$.

For $\alpha \in \zzG$, let $E^*(\alpha)$ denote the side of $N$ containing $[\alpha]$. Let $\No$ (resp. $\Nbd$) denote the interior (resp. boundary of $N$). Let $\NZ:=N \cap \Z^2$ (resp. $\NZo:= N^\circ \cap \Z^2$) denote the lattice points (resp. interior lattice points) of $N$.


\begin{example}

\begin{figure}
\centering

\begin{subfigure}[c]{0.36\textwidth}
\centering

\begin{tikzpicture}[
    scale=.82,
    line cap=round,
    line join=round,
    >=Stealth
]

\draw[black!50,dashed,fill=black!5]
    (0,0) rectangle (4,3.464);


\draw[line width=.8pt] (0.000,1.793) -- (0.500,1.505);
\draw[line width=.8pt] (0.500,0.350) -- (0.000,0.061);

\draw[line width=.8pt] (3.500,2.082) -- (4.000,1.793);
\draw[line width=.8pt] (4.000,0.061) -- (3.894,0.000);

\draw[line width=.8pt] (1.106,3.464) -- (1.500,3.237);
\draw[line width=.8pt] (1.894,3.464) -- (1.500,3.237);
\draw[line width=.8pt] (3.106,3.464) -- (3.500,3.237);
\draw[line width=.8pt] (3.894,3.464) -- (3.500,3.237);

\draw[line width=.8pt] (0.500,0.350) -- (1.106,0.000);
\draw[line width=.8pt] (2.500,0.350) -- (1.894,0.000);
\draw[line width=.8pt] (2.500,0.350) -- (3.106,0.000);
\draw[line width=.8pt] (4.000,0.061) -- (3.894,0.000);

\draw[line width=.8pt] (0.500,0.350) -- (0.500,1.505);
\draw[line width=.8pt] (2.500,0.350) -- (2.500,1.505);
\draw[line width=.8pt] (1.500,2.082) -- (1.500,3.237);
\draw[line width=.8pt] (3.500,2.082) -- (3.500,3.237);

\draw[line width=.8pt] (1.500,2.082) -- (0.500,1.505);
\draw[line width=.8pt] (1.500,2.082) -- (2.500,1.505);
\draw[line width=.8pt] (3.500,2.082) -- (2.500,1.505);

\node[bvert] at (0.500,0.350) {};
\node[bvert] at (2.500,0.350) {};
\node[bvert] at (1.500,2.082) {};
\node[bvert] at (3.500,2.082) {};

\node[wvert] at (0.500,1.505) {};
\node[wvert] at (2.500,1.505) {};
\node[wvert] at (1.500,3.237) {};
\node[wvert] at (3.500,3.237) {};

\end{tikzpicture}

\caption{}
\label{fig:genus-one-graph}
\end{subfigure}
\hfill
\begin{subfigure}[c]{0.36\textwidth}
\centering

\begin{tikzpicture}[
    scale=.82,
    line cap=round,
    line join=round,
    >=Stealth
]

\draw[black!50,dashed,fill=black!5]
    (0,0) rectangle (4,3.464);


\draw[line width=.8pt] (0.000,1.793) -- (0.500,1.505);
\draw[line width=.8pt] (0.500,0.350) -- (0.000,0.061);

\draw[line width=.8pt] (3.500,2.082) -- (4.000,1.793);
\draw[line width=.8pt] (4.000,0.061) -- (3.894,0.000);

\draw[line width=.8pt] (1.106,3.464) -- (1.500,3.237);
\draw[line width=.8pt] (1.894,3.464) -- (1.500,3.237);
\draw[line width=.8pt] (3.106,3.464) -- (3.500,3.237);
\draw[line width=.8pt] (3.894,3.464) -- (3.500,3.237);

\draw[line width=.8pt] (0.500,0.350) -- (1.106,0.000);
\draw[line width=.8pt] (2.500,0.350) -- (1.894,0.000);
\draw[line width=.8pt] (2.500,0.350) -- (3.106,0.000);
\draw[line width=.8pt] (4.000,0.061) -- (3.894,0.000);

\draw[line width=.8pt] (0.500,0.350) -- (0.500,1.505);
\draw[line width=.8pt] (2.500,0.350) -- (2.500,1.505);
\draw[line width=.8pt] (1.500,2.082) -- (1.500,3.237);
\draw[line width=.8pt] (3.500,2.082) -- (3.500,3.237);

\draw[line width=.8pt] (1.500,2.082) -- (0.500,1.505);
\draw[line width=.8pt] (1.500,2.082) -- (2.500,1.505);
\draw[line width=.8pt] (3.500,2.082) -- (2.500,1.505);


\draw[red,line width=1.35pt,<-]
    (4.000,1.793) -- (0.000,1.793);

\draw[red,line width=1.35pt,<-]
    (4.000,0.061) -- (0.000,0.061);

\draw[green!80!black,line width=1.35pt,<-]
    (0.000,0.061) -- (1.964,3.464);

\draw[green!80!black,line width=1.35pt,<-]
    (1.964,0.000) -- (3.964,3.464);

\draw[green!80!black,line width=1.35pt,<-]
    (3.964,0.000) -- (4.000,0.061);

\draw[blue,line width=1.35pt,<-]
    (0.000,1.793) -- (1.036,0.000);

\draw[blue,line width=1.35pt,<-]
    (1.036,3.464) -- (3.036,0.000);

\draw[blue,line width=1.35pt,<-]
    (3.036,3.464) -- (4.000,1.793);

\node[bvert] at (0.500,0.350) {};
\node[bvert] at (2.500,0.350) {};
\node[bvert] at (1.500,2.082) {};
\node[bvert] at (3.500,2.082) {};

\node[wvert] at (0.500,1.505) {};
\node[wvert] at (2.500,1.505) {};
\node[wvert] at (1.500,3.237) {};
\node[wvert] at (3.500,3.237) {};

\end{tikzpicture}

\caption{}
\label{fig:genus-one-zigzags}
\end{subfigure}
\hfill
\begin{subfigure}[c]{0.20\textwidth}
\centering

\begin{tikzpicture}[
    scale=.82,
    >=Stealth,
    line cap=round,
    line join=round
]


\draw[blue,line width=1.35pt,<-]
    (0,2) -- (1,0);

\draw[red,line width=1.35pt,<-]
    (1,0) -- (0,0);

\draw[red,line width=1.35pt,<-]
    (0,0) -- (-1,0);

\draw[green!80!black,line width=1.35pt,<-]
    (-1,0) -- (0,2);

\foreach \p in {
    (-1,0),
    (0,0),
    (1,0),
    (0,2),
    (0,1)
}{
    \fill[black] \p circle (2.7pt);
}

\end{tikzpicture}

\caption{}
\label{fig:genus-one-newton}
\end{subfigure}

\caption{
A periodic bipartite graph~(A), its zig-zag paths (drawn, as is customary, as paths in the medial graph)~(B), and its Newton polygon~(C).
}\label{fig:running-example-zigzags-genus-one}
\end{figure}

We will use as a running example the periodic bipartite graph in
\cref{fig:running-example-zigzags-genus-one}(a).
Its zig-zag paths are shown in
\cref{fig:running-example-zigzags-genus-one}(b), and its Newton polygon in \cref{fig:running-example-zigzags-genus-one}(c).
\end{example}

\subsection{Kasteleyn theory on the torus}

Choose a fundamental rectangle for the torus with horiztonal (resp. vertical) side $s_z$ (resp. $s_w$). Let $\be=\bb\bw$ be an edge of $\Gtor$ oriented from $\bb$ to $\bw$. Let
\[
 \hom[\be] := (\be \wedge s_w,-\be\wedge s_z)\in\Z^2, 
\]
where $\be \wedge s_w$ and $\be\wedge s_z$ are the signed intersection numbers of $\be$ with the  sides of the fundamental rectangle. For $\ptC = (z,w) \in \C^2$ and $\pt = (j,k) \in \Z^2$, we write 
\[
\ptC^{\pt} = z^j w^k
\]
for the corresponding monomial. Define the \emph{Kasteleyn matrix} $\bK(\ptC): \C^{\blackvertices(G)} \to 
\C^{\whitevertices(G)}$ by 
\[
\bK_{\bw \bb}(\ptC) :=
\sum_{\be=\bb\bw} \wt(\be)\kappa(\be)
\ptC^{\hom[\be]},
\]
where the sum is over all edges with endpoints $\bb$ and $\bw$, and $\kappa:\edges(\Gtor) \to \{\pm 1\}$ is a \emph{Kasteleyn sign}. The \emph{characteristic polynomial} is the Laurent polynomial in $z,w$ defined by
\[
P(\ptC) := \det \bK(\ptC).
\]
By {\cite[Theorem~3.12]{GK}}, its Newton polygon, i.e., the convex hull of the exponents of monomials with nonzero coefficients in $P(\ptC)$ agrees with the Newton polygon $N$ of $G$ defined in \cref{sec:NewtonPolygon}.

The \emph{open spectral curve} is the vanishing locus
\[
\mcurve^\circ := \{\ptC \in (\C^\times)^2 \mid P(\ptC) = 0\}.
\]
The open spectral curve is not compact. Toric geometry provides a standard way to compactify it; see e.g.~\cite{GKZ} for more details. The Newton polygon $N$ defines a compactification of $(\C^\times)^2$ given by
\[
\toricsurface := 
\overline{ \{ [\ptC^\pt]_{\pt \in \NZ} \mid \ptC \in (\C^\times)^2 \}} 
\subseteq \mathbb P^{|\NZ|-1},
\]
called the \emph{toric surface} associated with $N$.
Its boundary decomposes as
\[
\toricsurface \setminus (\C^\times)^2 
= \bigcup_{\text{sides $\Eside$ of $N$}} \partial V^{\mathrm{tor}}_\Eside,
\]
where for each side $\Eside$ of $N$, 
$\partial V^{\mathrm{tor}}_\Eside:=(\toricsurface\cap\Pbb^{|\Eside|_\Z}) \cong \C\mathbb P^1$
 is called a \emph{line at infinity}.
\noindent Here and below, we denote by $|\Eside|_\Z:=|\Eside\cap\Z^2|-1$ the \emph{integral length} of $\Eside$. 

The \emph{spectral curve} $\mcurve$ is defined as the closure of $\openspectralcurve$ inside $\toricsurface$. The points in $\spectralcurve \setminus \openspectralcurve$ are called 
\emph{points at infinity} of $\openspectralcurve$. The intersection $\spectralcurve \cap \partial V^{\mathrm{tor}}_\Eside$ contains $|\Eside|_\Z$ points, counted with multiplicity.

\begin{example}

\label{ex:running_graph_zigzags_genus_one}

\begin{figure}

\begin{tikzpicture}[
    line cap=round,
    line join=round,
    >=Stealth
]

%

\draw[black!50,dashed,fill=black!5]
    (0,0) rectangle (4,3.464);

%
%
%
%

\draw
    (0.000,1.793) -- (0.500,1.505);

\draw
    (3.500,2.082)
    -- node[right] {$\displaystyle \frac{\rho_z}{X_1X_2}$}
    (4.000,1.793);


\draw
    (0.500,0.350)
    -- node[left] {$\displaystyle
        \frac{X_1X_2X_3}{\rho_z\rho_w}$}
    (0.000,0.061);

\draw
    (4.000,0.061) -- (3.894,0.000);

\draw
    (3.894,3.464) -- (3.500,3.237);



\draw
    (0.500,0.350)
    -- node[below] {$\displaystyle \frac{1}{\rho_w}$}
    (1.106,0.000);

\draw
    (1.106,3.464) -- (1.500,3.237);


\draw
    (2.500,0.350)
    -- node[below] {$\displaystyle \frac{1}{\rho_w X_1}$}
    (1.894,0.000);

\draw
    (1.894,3.464) -- (1.500,3.237);


\draw
    (2.500,0.350)
    -- node[below right] {$\displaystyle
        \frac{X_2X_3}{\rho_w}$}
    (3.106,0.000);

\draw
    (3.106,3.464) -- (3.500,3.237);


\draw
    (0.500,0.350)
    --
    (0.500,1.505);

\draw
    (2.500,0.350)
    --
    (2.500,1.505);

\draw
    (1.500,2.082)
    --
    (1.500,3.237);

\draw
    (3.500,2.082)
    --
    (3.500,3.237);


\draw
    (1.500,2.082)
    --
    (0.500,1.505);

\draw
    (1.500,2.082)
    --
    (2.500,1.505);

\draw
    (3.500,2.082)
    --
    (2.500,1.505);


\coordinate[bvert,label=right:$\mathsf{b}_1$]
    (b1) at (0.500,0.350);

\coordinate[bvert,label=right:$\mathsf{b}_2$]
    (b2) at (2.500,0.350);

\coordinate[bvert,label=below:$\mathsf{b}_3$]
    (b3) at (1.500,2.082);

\coordinate[bvert,label=below:$\mathsf{b}_4$]
    (b4) at (3.500,2.082);


\coordinate[wvert,label=above:$\mathsf{w}_1$]
    (w1) at (0.500,1.505);

\coordinate[wvert,label=above:$\mathsf{w}_2$]
    (w2) at (2.500,1.505);

\coordinate[wvert,label=above:$\mathsf{w}_3$]
    (w3) at (1.500,3.237);

\coordinate[wvert,label=above:$\mathsf{w}_4$]
    (w4) at (3.500,3.237);


\node[blue] at (1.500,0.925) {$\bface_1$};

\node[blue] at (3.500,0.925) {$\bface_2$};

\node[blue] at (0.500,2.660) {$\bface_3$};

\node[blue] at (2.500,2.660) {$\bface_4$};

\end{tikzpicture}
\caption{A choice of edge weights for the graph in \cref{fig:running-example-zigzags-genus-one}.}\label{fig:running_graph_zigzags_genus_one_K}
\end{figure}
For our running example, let $X_i:=X_{\bface_i}$ and let $\rho_z$ and $\rho_w$ be as in \cref{sec:bip_graph_torus}. A choice of edge weights representing the gauge equivalence class is shown in \cref{fig:running_graph_zigzags_genus_one_K}. Since every face is a hexagon, the Kasteleyn sign may be chosen to be identically equal to $1$. With these choices, the Kasteleyn matrix is
\[
\bK(z,w)=
\begin{blockarray}{ccccc}
& \bb_1 & \bb_2 & \bb_3 & \bb_4\\
\begin{block}{c[cccc]}
\bw_1 &
1 & 0 & 1 &
{\rho_z z}/{X_1X_2}
\\
\bw_2 &
0 & 1 & 1 & 1
\\
\bw_3 &
1/{\rho_w w} &
1/{X_1\rho_w w} &
1 & 0
\\
\bw_4 &
{X_1X_2X_3}/{\rho_z\rho_w zw} &
{X_2X_3}/{\rho_w w} &
0 & 1
\\
\end{block}
\end{blockarray},
\]
and the characteristic polynomial is
\[
{
P(z,w)
=
1
-
\frac{
1+\frac{1}{X_1}+X_3+X_2X_3
}{
\rho_w w
}
+
\frac{
X_2X_3+\frac{X_3}{X_1}
}{
\rho_w^2 w^2
}
-
\frac{
X_2X_3
}{
\rho_z\rho_w^2 z w^2
}
-
\frac{
X_3\rho_z z
}{
X_1\rho_w^2 w^2
}.
}
\]
Its Newton polygon agrees with \cref{fig:running-example-zigzags-genus-one}(C).
\end{example}

\subsection{Amoebas and Harnack curves}\label{sec:Harnack}

The \emph{amoeba} of the open spectral curve $\openspectralcurve$ is the image
\[
\amoebaSig := 
\Log(\openspectralcurve),
\quad\text{where}\quad
\Log : (\C^\times)^2 \to \R^2,\quad 
(z,w) \mapsto (\log|z|, \log|w|).
\]

The points at infinity of $\openspectralcurve$ correspond to \emph{tentacles} of the amoeba. If $p \in \spectralcurve \cap \partial V^{\mathrm{tor}}_\Eside$, then the corresponding tentacle is directed along the outward-pointing normal to $\Eside$.  

Amoebas were introduced by Gelfand--Kapranov--Zelevinsky in the early 1990s \cite{GKZ} and have since become a central tool for connecting complex algebraic geometry with tropical and convex geometry; see for example the survey~\cite{MikhalkinAmoebas}. Their importance for the dimer model was recognized by Kenyon--Okounkov--Sheffield \cite{KOS}.

A real algebraic curve $\openspectralcurve$  defined by a real Laurent polynomial $P(\ptC)$ with Newton polygon $N$ is called a \emph{(simple) Harnack curve} if
\[
\Log : \openspectralcurve \rightarrow \amoebaSig
\]
is at most $2$--$1$. $\Log$ is then $2$--$1$ over the interior of $\amoebaSig$ and $1$--$1$ over the boundary which coincides with $\Log(\mcurve(\R))$. Harnack curves are classical objects; see~\cite{MikhalkinReal, MikhalkinRullgaard} for modern developments.

\begin{theorem}[Kenyon--Sheffield~\cite{KS}; see also~\cite{KOS,KenyonOkounkovHarnack}]
\label{thm:spectral_curve_is_harnack}
The spectral curve of a dimer model with positive edge weights is a Harnack curve.
\end{theorem}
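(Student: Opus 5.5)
The plan is to use Mikhalkin's characterization of simple Harnack curves. A real curve $\openspectralcurve$ with Newton polygon $N$ is Harnack (that is, $\Log$ is at most $2$--$1$) if and only if its \emph{logarithmic Gauss map}
\[
\gamma:\openspectralcurve\to\Pbb^1,\qquad \ptC\mapsto[zP_z(\ptC):wP_w(\ptC)],
\]
is totally real, meaning $\gamma^{-1}(\R\Pbb^1)=\mcurve(\R)\cap(\C^\times)^2$. Equivalently, the critical points of $\Log|_{\openspectralcurve}$ are exactly the real points. I would therefore first reduce to the following statement: for every $(x,y)\in\R^2$, the torus $T_{x,y}=\{|z|=e^x,\ |w|=e^y\}$ meets $\openspectralcurve$ in at most two points, and these form a complex-conjugate pair (or a single real point). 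Real coefficients already give conjugation symmetry, so the content is the upper bound.

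To normalize the torus, I would apply a magnetic gauge: multiply the weight of each edge $\be$ by $e^{\<(x,y),\hom[\be]\>}$. This keeps the weights positive and replaces $P(z,w)$ by $P(e^xz,e^yw)$. It therefore suffices to prove the bound on the unit torus, uniformly over all positive weights.

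On the unit torus I would use the Kasteleyn expansion of $P(e^{i\theta},e^{i\phi})$ as a signed sum over dimer covers of $\Gtor$. Grouping the covers by the homology class $h\in\NZ$ of their difference with a reference matching gives
\[
P(e^{i\theta},e^{i\phi})=\sum_{h}\pm Z_h\,e^{i\<h,(\theta,\phi)\>},\qquad Z_h>0.
\]
The Kasteleyn signs make the sign depend on $h$ only through the parity class, as in the four partition functions $P(\pm1,\pm1)$. Following Kenyon--Okounkov--Sheffield, I would then show that $\theta\mapsto P(e^{i\theta},e^{i\phi})$ and the Ronkin function of $P$ are well behaved enough to bound the zero count. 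The Ronkin function is the free energy $\frac{1}{(2\pi i)^2}\int_{T_{x,y}}\log|P|\,\frac{dz}{z}\frac{dw}{w}$, and it is strictly convex off the amoeba complement. The Hessian of the surface tension (its Legendre dual) equals $\pi^{-2}$ times the density computed from the number of zeros on the torus. Combining the identity $\text{Area}(\amoebaSig)=\int$ (number of torus zeros)$\,dx\,dy/2$ with the surface-tension estimate gives $\text{Area}(\amoebaSig)\le\pi^2\text{Area}(N)$. Passare--Rullgård's inequality gives $\text{Area}(\amoebaSig)\ge\pi^2\text{Area}(N)$ for every curve, so equality holds. By the Mikhalkin--Rullgård theorem, equality in the area bound characterizes Harnack curves up to multiplication by a monomial and a constant, and the real coefficients then give the result.

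The main obstacle is the upper bound $\text{Area}(\amoebaSig)\le\pi^2\text{Area}(N)$, equivalently that the torus zero count averages to at most $2$. If the variational (surface-tension) route proves too delicate, I would instead use a deformation argument. The space of positive weights modulo gauge is connected. For special weights obtained by Viro patchworking the curve is visibly Harnack. Along a path of positive weights, being Harnack is open among smooth curves, and it is closed because the area equality passes to limits. Singular members would be handled by Mikhalkin's result that Harnack curves can only acquire real isolated double points, which are still allowed.
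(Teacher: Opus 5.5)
The paper gives no proof of this statement; it is imported from Kenyon--Sheffield and Kenyon--Okounkov--Sheffield. Judged on its own, your proposal has a genuine gap: in the main route the two area inequalities are reversed.

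Passare--Rullg{\aa}rd's theorem is the \emph{upper} bound $\operatorname{Area}(\amoebaSig)\le\pi^2\operatorname{Area}(N)$, and it holds for every curve. Its proof is essentially the computation you sketch: the torus zeros give $\det\operatorname{Hess}R\ge 1/\pi^2$ on the amoeba, while $\int\det\operatorname{Hess}R=\operatorname{Area}(N)$. Mikhalkin--Rullg{\aa}rd then show that equality, i.e.\ amoebas of \emph{maximal} area, characterizes Harnack curves. So the inequality you call the main obstacle is automatic and uses nothing about dimers. The inequality you attribute to Passare--Rullg{\aa}rd ``for every curve'' is false in general; it is exactly the maximality you must prove. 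For real curves, ``at most two zeros on each torus'' is equivalent to this lower bound, not to the upper one.

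The identity $\operatorname{Area}(\amoebaSig)=\frac12\int(\#\text{torus zeros})\,dx\,dy$ has the same problem. Zeros on a generic torus come in conjugate pairs, so in general one only has $\ge$, with equality iff almost every torus meets the curve exactly twice, i.e.\ iff the curve is already Harnack. Likewise, $\det\operatorname{Hess}\sigma=\pi^2$ for the surface tension is a consequence of the Harnack property, not an independent input. As written, positivity of the weights is never actually used.

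The signed expansion $\sum_h\pm Z_h e^{\imunit\langle h,(\theta,\phi)\rangle}$, with $Z_h>0$ and parity-determined signs, cannot suffice by itself. The polynomial $z+w+(zw)^{-1}-k$ has this form for every $k>0$. For $k=1$ it vanishes at the six unit-torus points $(1,\pm \imunit)$, $(\pm \imunit,1)$, $(\imunit,-\imunit)$, $(-\imunit,\imunit)$, so $\Log$ is at least $6$-to-$1$ over the origin; more generally it has six zeros on the unit torus for all $0<k<3$. What is missing is a genuinely dimer-specific input that forces at most two zeros on each torus. This could be an inequality among the $Z_h$ extracted from the combinatorics of matchings, or some other positivity argument. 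That is the actual content of the cited works.

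Your fallback deformation argument breaks at the same place. Harnack-ness is open among smooth real curves transverse to the toric boundary, and it is closed. But you cannot assume a path of positive weights avoids singular spectral curves: real nodal curves form a real codimension-one locus, and nothing in your argument shows that its preimage in weight space has codimension at least two. At a curve with a solitary real double point, the only singularity Mikhalkin's result allows in the closure of the Harnack locus, Harnack-ness is not open. In the local model $x^2+y^2=\varepsilon$ the vanishing oval disappears for $\varepsilon<0$, leaving a smooth curve with only $g$ real components, which is not Harnack. So Mikhalkin's theorem gives you closedness but not openness across singular members. Excluding a crossing to the vanishing side is again exactly where positivity of the weights has to enter, and your sketch supplies no such argument.
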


\subsection{Abel map and Jacobian}\label{sec:jacobian}

Let $\mcurve$ be a compact Riemann surface of genus $g$. Let \[(\bm A=(A_1,\dots,A_g), \bm B=(B_1,\dots,B_g))\] be a symplectic basis of $H_1(\mcurve,\Z)$, i.e., a basis satisfying
\begin{equation}
A_j \wedge A_k = B_j \wedge  B_k = 0, \qquad  A_j \wedge B_k = \delta_{jk}, \label{eq:AwedgeB}
\end{equation}
where $\wedge$ denotes the intersection pairing. For $\bm C = (C_1,\dots,C_g)$ and $\bm D = (D_1,\dots, D_g)$ in $H_1(\mcurve,\Z)$, we introduce the vector notation for the $g \times g$ matrix
\[
\bm C \wedge \bm D = \left( C_j \wedge D_k \right)_{j,k=1}^g, 
\]
so that \eqref{eq:AwedgeB} can be written as
\[
\bm A \wedge \bm A = \bm B \wedge \bm B=0, \qquad \bm A \wedge \bm B = I.
\]

Let $\bm \omega=(\omega_1,\dots,\omega_g)
$ be the corresponding dual basis of holomorphic $1$-forms on
$\mcurve$, with the normalization
\[
\int_{\bm A} \bm \omega =
2\pi\imunit I,
\]
where, for a tuple of cycles $\bm C = (C_1,\dots,C_g)$ and $1$-forms $\bm \omega = (\omega_1,\dots,\omega_g)$, we introduce the vector notation for the $g \times g$ matrix
\[
\int_{\bm C} \bm \omega := \left(\int_{C_j}\omega_k \right)_{j,k=1}^g.
\]
The \emph{period matrix} $\Pi$ is the $g \times g$ symmetric matrix with negative-definite real part defined by
\[
\Pi := \int_{\bm B} \bm \omega.
\]
The \emph{Jacobian variety} of $\mcurve$ is the complex torus
\[
\jac(\mcurve) :=  \C^{g} \big/ \big(2\pi \imunit \Z^{g} \oplus \Pi \Z^{g}\big).
\]
We denote by $[\ptCg]$ the image of $\ptCg \in \C^{g}$ in $\jac(\mcurve)$.

A \emph{divisor} on $\mcurve$ is a finite formal sum of the form \[ \divisor = \sum_{j=1}^n c_j \divptgeom_j, \] where $\divptgeom_j \in \mcurve$ and $c_j \in \Z$. The integer $\sum_{j=1}^n c_j$ is called the \emph{degree} of $\divisor$. A divisor $\divisor$ is said to be \emph{effective} if $c_j \geq 0$ for all $j$. 

The \emph{symmetric product} of $\mcurve$ of degree $n$ is defined as \begin{equation}\label{eq:sym_dfn} \sym^n(\mcurve) := \mcurve^n / \mathfrak S_n, \end{equation} where the symmetric group $\mathfrak S_n$ acts by permuting the factors. Points of $\sym^n(\mcurve)$ correspond to effective divisors of degree $n$.

A \emph{differential of the third kind} on $\mcurve$ is a meromorphic $1$-form on $\mcurve$ with only simple poles. Given a degree-$0$ divisor $\divisor = \sum_{j=1}^n c_j \divptgeom_j$ on $\mcurve$, there exists a unique meromorphic differential $\omega_\divisor$ of the third kind with simple poles at $\divptgeom_1,\dots,\divptgeom_n$ and residues $c_1,\dots,c_n$, respectively, and vanishing $A$-periods:
\[	
\res_{\divptgeom_j}(\omega_\divisor) = c_j, \quad \text{for } 1\leq j\leq n,  \qquad \int_{\bm A} \omega_\divisor = 0; 
\]
see \cite[Chapter~I]{Fay}. We call $\omega_\divisor$ the \emph{$A$-normalized differential of the third kind with residue divisor $\divisor$}.

Fixing a basepoint $\basept \in \mcurve$, the \emph{Abel map}
\[
\mu_\basept : \mcurve \rightarrow \jac(\mcurve)
\]
is defined by
\[
\mu_\basept(\ptSig) :=  \int_{\basept}^{\ptSig} \bm \omega  = \left(\int_\basept^{\ptSig} \omega_j \right)_{j=1}^g \ \ \text{mod } 2\pi \imunit\Z^{g} \oplus \Pi \Z^{g}.
\]
The Abel map extends linearly to divisors.

\subsection{Theta function and Riemann's theorem} \label{sec:theta_fn}

The \emph{theta function} associated to $\mcurve$ is the holomorphic function
\[
\theta(\cdot \mid \Pi):\C^{g}\to\C,\qquad 
\theta(\ptCg \mid \Pi) := 
\sum_{\ptZ \in \Z^{g}} \exp(\tfrac{1}{2} \<\ptZ,\Pi \ptZ \> + \<\ptZ,\ptCg \>).
\]

\begin{theorem}[Riemann's theorem {\cite[Thm.~3.1]{Tata1}}] 
\label{thm:Riemann}
The Abel map
\[
\mu_\basept : \sym^g(\mcurve) \rightarrow \jac(\mcurve)
\]
is surjective and bimeromorphic.  
Its meromorphic inverse is described as follows: there exists a vector $\vecRC \in \C^{g}$, called the \emph{vector of Riemann constants}, depending only on the symplectic basis $(\bm A,\bm B)$ and basepoint $\basept$, such that for each $\vecptjac \in \C^{g}$, exactly one of the following holds:
\begin{enumerate}[label=(\arabic*)]
    \item\label{RT1} The function 
    $
    \ptSig \mapsto \theta(\mu_\basept(\ptSig) - \vecptjac + \vecRC \mid \Pi)
    $
    vanishes identically on $\mcurve$. In this case $[\ptCg]$ lies in a proper closed subset of $\jac(\mcurve)$.
    
    \item\label{RT2} The function 
    $
    \ptSig \mapsto \theta(\mu_\basept(\ptSig) - \vecptjac + \vecRC \mid \Pi)
    $
    has exactly $g$ zeroes $\ptSig_1,\dots,\ptSig_g \in \mcurve$ (counted with multiplicity), satisfying
    \[
    \mu_\basept(\ptSig_1 + \cdots + \ptSig_g) = [\ptCg].
    \]
\end{enumerate}
\end{theorem}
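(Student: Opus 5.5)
The plan is the classical argument via the canonical dissection of $\mcurve$, as in \cite[Ch.~II]{Tata1}. Cut $\mcurve$ along representatives of the symplectic basis $(\bm A,\bm B)$, all passing through $\basept$. This gives a simply connected $4g$-gon $\mcurve_{\mathrm{cut}}$, on which $\mu_\basept$ lifts to a single-valued holomorphic map $\tilde\mu:\mcurve_{\mathrm{cut}}\to\C^g$. For fixed $\vecptjac$ and a vector $\bm e\in\C^g$ to be pinned down later, set
\[
f(\ptSig):=\theta(\tilde\mu(\ptSig)-\vecptjac+\bm e\mid\Pi).
\]
The first step is the quasi-periodicity of $\theta$:
\begin{itemize}
\item $\theta(\bm z+2\pi\imunit\bm m)=\theta(\bm z)$;
\item $\theta(\bm z+\Pi\bm m)=\exp\bigl(-\tfrac12\<\bm m,\Pi\bm m\>-\<\bm m,\bm z\>\bigr)\theta(\bm z)$.
\end{itemize}
The boundary values of $f$ on paired sides of the polygon then differ by explicit exponential factors. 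In particular, the zero set of $f$ is a well-defined subset of $\mcurve$.

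\emph{Counting zeros.} Assume $f\not\equiv 0$, and integrate $d\log f$ around $\partial\mcurve_{\mathrm{cut}}$. Across $A_j$ versus $A_j^{-1}$, the lift $\tilde\mu$ jumps by the $j$-th column of $\Pi$. Across $B_j$, it jumps by $2\pi\imunit\,\bm e_j$. Only the first type of jump changes $\log f$, namely by $-\tilde\mu_j+\mathrm{const}$. Its differential $-\omega_j$ integrated over $A_j$ gives $-2\pi\imunit$, so the total is $2\pi\imunit\, g$ (after fixing orientations). Hence $f$ has exactly $g$ zeros, counted with multiplicity.

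\emph{Locating the zeros.} Next, integrate $\tilde\mu_k\,d\log f$ over the same boundary. By the residue theorem the left side is $2\pi\imunit\sum_i\tilde\mu_k(\ptSig_i)$. The boundary terms produce $2\pi\imunit(\vecptjac-\bm e)_k$, plus terms that are independent of $\vecptjac$ and of the integration constants modulo the lattice $2\pi\imunit\Z^g\oplus\Pi\Z^g$. These terms are quantities like $\int_{A_j}\tilde\mu_k\omega_j$ and $\tilde\mu_k$ evaluated at polygon vertices. Collecting them into a single vector $\bm K$ depending only on $(\bm A,\bm B)$ and $\basept$, we get
\[
\mu_\basept(\ptSig_1+\dots+\ptSig_g)\equiv\vecptjac-\bm e+\bm K.
\]
Choosing $\vecRC:=\bm e:=\bm K$ gives alternative \ref{RT2}.

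\emph{The global statements.} Here I use the Jacobi inversion theorem. At a divisor of $g$ distinct points with $h^0(K_\mcurve-D)=0$, the Jacobian of $\mu_\basept:\sym^g\mcurve\to\jac(\mcurve)$ is $\det(\omega_k(\ptSig_i))\neq 0$, and such divisors exist by Riemann--Roch. So the image of this proper holomorphic map between compact connected $g$-dimensional complex manifolds contains an open set, hence is closed and open, hence is everything. This gives surjectivity. For bimeromorphicity, I need $f\not\equiv0$ for generic $\vecptjac$. Then the zero divisor of $f$ provides a holomorphic inverse on the dense open set where $f\not\equiv 0$, and by the zero-location formula it is the unique preimage. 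The locus where $f\equiv 0$ is an analytic subset of $\jac(\mcurve)$, cut out by the vanishing of all Taylor coefficients in $\ptSig$. It is proper because $\theta\not\equiv0$: if every translate $\mu_\basept(\mcurve)-\vecptjac+\vecRC$ lay in the theta divisor, then so would their union over all $\vecptjac$, which is all of $\C^g$, a contradiction.

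The main obstacle is the boundary bookkeeping in the second residue computation. One has to check that every jump term is either independent of $\vecptjac$ or an integral multiple of a lattice vector, so that $\vecRC$ is genuinely a constant modulo the period lattice, and one has to get orientations and signs right so that the conclusion reads $+\vecptjac$ rather than $-\vecptjac$ under the paper's normalization $\int_{\bm A}\bm\omega=2\pi\imunit I$. I would do this computation in genus $1$ first to fix the conventions, and then carry it out one pair of sides $A_j,B_j$ at a time.
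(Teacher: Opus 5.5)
The paper gives no proof of this statement; it quotes Mumford's \emph{Tata lectures}. Your treatment of the dichotomy is the classical argument of the cited source and is fine: the zero count via $d\log f$, the zero-sum formula via $\tilde\mu_k\,d\log f$, and reading $\vecRC$ off the boundary terms. The problems are in the global statements.

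First, a minor one. In the surjectivity step, ``contains an open set, hence is closed and open'' is a non sequitur. The image is closed by compactness, but containing an open set does not make it open. You would need Remmert's proper mapping theorem: the image is then an analytic subset with nonempty interior, hence everything. Simpler still, your own theta construction already shows that every $[\vecptjac]$ in the dense set $U$ where $f\not\equiv 0$ has the preimage $\operatorname{div} f$. A closed set containing a dense set is everything, so the Jacobian-determinant argument is unnecessary.

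The genuine gap is injectivity, which is what ``bimeromorphic'' and ``meromorphic inverse'' require. The zero-location formula only shows that $\operatorname{div} f$ is \emph{a} preimage of $[\vecptjac]$. It says nothing about another effective divisor $D'$ of degree $g$ with $\mu_\basept(D')=[\vecptjac]$, so ``by the zero-location formula it is the unique preimage'' does not follow. It is in fact true that every $[\vecptjac]$ with $f\not\equiv 0$ has a single preimage, but that is part of Riemann's vanishing theorem and needs genuinely more input.

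For generic injectivity, the standard missing ingredients are Riemann--Roch and the direction of Abel's theorem saying that $\mu_\basept(D)=\mu_\basept(D')$ implies $D\sim D'$, so the fibre through $D$ is $|D|$. Let $C\subset\sym^g(\mcurve)$ be the locus of special divisors, i.e.\ $h^0(D)\ge 2$, which is also the critical locus of $\mu_\basept$. These two facts give $\mu_\basept^{-1}(\mu_\basept(C))=C$ and one-point fibres over $\jac(\mcurve)\setminus\mu_\basept(C)$. Since $\mu_\basept(C)$ is a proper closed analytic subset, $\mu_\basept$ restricts to a biholomorphism $\sym^g(\mcurve)\setminus C\to\jac(\mcurve)\setminus\mu_\basept(C)$.

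Alternatively, you can avoid Abel's theorem with a covering argument. Over $V:=U\setminus\mu_\basept(C)$ the map is a finite covering. Using Remmert to know $\mu_\basept(C)$ is analytic, $\mu_\basept^{-1}(V)$ is the complement of a proper analytic subset of $\sym^g(\mcurve)$ and hence connected. The image of your section $[\vecptjac]\mapsto\operatorname{div} f$ is open and closed in it, hence all of it. Either way, some such argument has to be supplied.
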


\begin{remark}\label{remark:theta_divisor}
If $g \ge 1$, the Abel map embeds $\mcurve$ into $\jac(\mcurve)$. The \emph{theta divisor} is
\[
\Theta := \{ \ptCg \in \C^{g} \mid \theta(\ptCg \mid \Pi)=0 \}.
\]
Although $\theta(\ptCg \mid \Pi)$ is multi-valued on $\jac(\mcurve)$, the zero set is periodic, i.e.,
\[
\Theta + (2\pi \imunit \Z^{g} \oplus \Pi \Z^{g}) = \Theta,
\]
so $\Theta$ descends to a well-defined subset of $\jac(\mcurve)$. Riemann’s theorem then asserts that for $\vecptjac \in \C^{g}$, either 
$\mu_\basept(\mcurve) \subseteq \Theta + [\vecptjac] - [\vecRC]$, 
or else $\mu_\basept(\mcurve) \cap (\Theta + [\vecptjac] - [\vecRC])$ consists of exactly $g$ points counted with multiplicity. 
\end{remark}

\subsection{Standard divisors} \label{sec:standard_divisors}

For Harnack curves (and more generally, M-curves), Boutillier--Cimasoni--de Tili\`ere~{\cite{BDdT}} prove stronger properties than the general statements of~\cref{sec:jacobian} and~\cref{sec:theta_fn}. These properties will be important for tropicalizing the theta function.

For a Harnack curve $\openspectralcurve$ of genus $g$, the connected components of the real locus 
$\spectralcurve(\R)$ are called \emph{ovals}. 
There are $g+1$ ovals in total. 
There is a unique \emph{outer oval} $B_0^{\mathrm{ov}}$ intersecting the lines at infinity, 
while the remaining ovals $\bm B=(B_1^{\mathrm{ov}},\dots,B_g^{\mathrm{ov}})$, called \emph{compact ovals}, 
are in bijection with the interior lattice points of $N$. The compact ovals are in bijection with the bounded connected components of the amoeba; we give them the orientation induced by the amoeba. In other words, the outer oval is oriented counterclockwise and the compact ovals are oriented clockwise.

Choosing cycles
$
\bm A^{\mathrm{ov}}
=
(A_1^{\mathrm{ov}},\dots,A_g^{\mathrm{ov}})
$
that are anti-invariant under complex conjugation, and
\[
\bm A^{\mathrm{ov}} \wedge \bm B^{\mathrm{ov}} = I,
\]
we obtain a symplectic basis
$
(\bm A^{\mathrm{ov}},\bm B^{\mathrm{ov}})
$
of $H_1(\mcurve,\Z)$; see \cite[Section 2.2.4]{KenyonOkounkovHarnack}. We call such a basis \emph{oval-adapted}. 

Hereafter, we denote objects associated to the symplectic basis $(\bm A^{\mathrm{ov}}, \bm B^{\mathrm{ov}})$ by a superscript $\mathrm{ov}$. We also fix a basepoint $\basept \in B_0^\mathrm{ov}$ throughout.

\begin{proposition} [Boutillier--Cimasoni--de Tili\`ere~{\cite{BDdT}}] \label{prop:omega_and_delta_pos}
		For a Harnack $\openspectralcurve$ with basepoint $\basept \in B_0^{\mathrm{ov}}$ and an oval-adapted symplectic basis $(\bm A^{\mathrm{ov}}, \bm B^{\mathrm{ov}})$, we have:
		\begin{enumerate}[label=(\arabic*)]
			\item\label{BDT1} If $\ptSig \in B_0^{\mathrm{ov}}$, then $\mu_\basept(\ptSig) \in  \R^{g}$.
			\item\label{BDT2} $\Pi^{\mathrm{ov}}_{jk} \in \R$ for all $1 \leq j,k \leq g$.
			\item\label{BDT3} $\vecRC^{\mathrm{ov}} \in \pi \imunit \onebf + \R^{g}$.
		\end{enumerate}
	\end{proposition}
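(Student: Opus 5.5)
The plan is to use the antiholomorphic involution $\sigma$ (complex conjugation) on the Harnack curve $\mcurve$, which fixes every oval pointwise, and to compute its action on the oval-adapted basis and on the normalized differentials. Since each oval is fixed pointwise, $\sigma_* B_j^{\mathrm{ov}} = B_j^{\mathrm{ov}}$ for $j=0,\dots,g$. By the choice of the $A$-cycles, $\sigma_* A_j^{\mathrm{ov}} = -A_j^{\mathrm{ov}}$. For a holomorphic form $\omega$, the form $\overline{\sigma^*\omega}$ is again holomorphic. Its $A$-periods are
\[
\int_{A_j}\overline{\sigma^*\omega_k}=\overline{\int_{\sigma_*A_j}\omega_k}=-\overline{2\pi\imunit\,\delta_{jk}}=2\pi\imunit\,\delta_{jk}.
\]
By uniqueness of the normalized basis, $\overline{\sigma^*\omega_k}=\omega_k$.

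\ref{BDT2} follows directly:
\[
\Pi_{jk}=\int_{B_j}\omega_k=\int_{\sigma_*B_j}\omega_k=\int_{B_j}\sigma^*\omega_k=\overline{\int_{B_j}\omega_k}=\overline{\Pi_{jk}}.
\]

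For \ref{BDT1}, take $\ptSig\in B_0^{\mathrm{ov}}$ and integrate along the arc of $B_0^{\mathrm{ov}}$ from $\basept$ to $\ptSig$. This arc $\gamma$ is fixed by $\sigma$, so the same computation gives $\int_\gamma\omega_k\in\R$. A different choice of path changes the value by a lattice vector in $2\pi\imunit\Z^g\oplus\Pi\Z^g$. The statement should therefore be read as saying that $\mu_\basept(\ptSig)$ has a real representative, namely the one obtained by integrating along the oval.

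The main work is \ref{BDT3}. I would use the explicit formula
\[
\RC_k=\pi\imunit+\tfrac12\Pi_{kk}-\sum_{j\neq k}\int_{A_j}\Big(\omega_j(\ptSig)\int_{\basept}^{\ptSig}\omega_k\Big),
\]
with the sign conventions fixed for the normalization $\int_{\bm A}\bm\omega=2\pi\imunit I$. The term $\tfrac12\Pi_{kk}$ is real by \ref{BDT2}. The difficulty is that the $A$-integrals depend on representatives and paths, so it is hard to check reality term by term.

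For that reason I would take a more invariant route. Apply conjugation to Riemann's theorem (\cref{thm:Riemann}). Since $\Pi$ is real, $\overline{\theta(\ptCg\mid\Pi)}=\theta(\overline{\ptCg}\mid\Pi)$. Moreover, $\overline{\mu_\basept(\sigma\ptSig)}=\mu_\basept(\ptSig)$, because $\basept$ is real.

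Now suppose $\ptSig_1+\dots+\ptSig_g$ are the zeroes of $\theta(\mu_\basept(\ptSig)-\vecptjac+\vecRC\mid\Pi)$, where we choose $\vecptjac$ so that the function does not vanish identically. Conjugating shows that $\sigma$ of this divisor is the zero set of $\theta(\mu_\basept(\ptSig)-\overline{\vecptjac}+\overline{\vecRC}\mid\Pi)$. Comparing with Riemann's theorem applied to the conjugate divisor, whose Abel image is $\overline{\vecptjac}$, forces
\[
[\overline{\vecRC}]=[\vecRC]
\]
in the Jacobian. This determination holds only modulo the real lattice, and also modulo $2$-torsion issues, since $\vecRC$ is characterized by $2\vecRC\equiv-\mu_\basept(K)$. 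So this step gives $\mathrm{Im}\,\vecRC\in\pi\Z^g$.

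The remaining step is to pin the imaginary part down to $\pi\onebf$ rather than some other element of $\pi\Z^g$. I would do this by a parity argument. Choose the divisor of one point on each compact oval; this is the standard divisor. Its Abel image is real, with imaginary part $0$ under oval-path representatives. Then argue that $\theta(\mu_\basept(\ptSig)-\mu_\basept(\divisor)+\vecRC\mid\Pi)$ must vanish on each compact oval $B_j^{\mathrm{ov}}$. With $\vecRC$ real, every term of the theta series is positive on $B_0^{\mathrm{ov}}$ and there would be no sign changes. A theta function with characteristic $\pi\imunit\onebf$, by contrast, produces the factors $(-1)^{m_j}$ needed for a real zero on each oval.

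I expect this determination of the half-period to be the main obstacle. I would try to settle it by a continuity/deformation argument within the connected space of Harnack curves, showing that the characteristic is locally constant. One then evaluates it on a degenerate configuration, a rational curve with $g$ pinched ovals, where the vanishing of $\theta$ on each oval can be checked directly.
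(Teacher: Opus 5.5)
Your arguments for (1) and (2) via the antiholomorphic involution $\sigma$ are correct and self-contained; the paper itself just cites \cite{BDdT} for all three parts. Your first step in (3) is also sound: conjugating Riemann's theorem gives $[\overline{\vecRC^{\mathrm{ov}}}]=[\vecRC^{\mathrm{ov}}]$, hence $\operatorname{Im}\vecRC^{\mathrm{ov}}\in\pi\Z^g$, since $\Pi^{\mathrm{ov}}$ is real and invertible.

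The gap is in pinning down the half-period. Your premise that a standard divisor has real Abel image is false. A point $\ptSig$ of a compact oval $B_k^{\mathrm{ov}}$ cannot be joined to $\basept\in B_0^{\mathrm{ov}}$ inside $\mcurve(\R)$, so there is no ``oval-path representative''. Suppose $\gamma$ joins them inside one half of $\mcurve\setminus\mcurve(\R)$. Then $\gamma\cdot\sigma(\gamma)^{-1}$ is anti-invariant and homologous to $\pm A_k^{\mathrm{ov}}$ modulo $B$-cycles. Reality of $\Pi^{\mathrm{ov}}$ then gives $2\imunit\operatorname{Im}\int_\gamma\bm\omega=\pm2\pi\imunit\bm e_k$, where $\bm e_k$ is the $k$-th standard basis vector. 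Hence $\mu_\basept(\ptSig)\in\pi\imunit\bm e_k+\R^g$ and $\mu_\basept(\divisor)\in\pi\imunit\onebf+\R^g$; this is exactly why the target in \cref{prop:jacpos} is $\jacpos(\mcurve)$.

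Beyond this, several steps do not carry weight:
\begin{itemize}
\item Positivity of $\theta$ on $B_0^{\mathrm{ov}}$ says nothing about zeros on the compact ovals.
\item The appeal to factors $(-1)^{m_j}$ is never made precise.
\item The deformation to a rational curve is only sketched. If the ovals shrink to solitary points, the vanishing cycles are $B$-cycles, so $\Pi^{\mathrm{ov}}_{kk}\to0$ and the theta series does not simplify without a modular transformation.
\end{itemize}

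The gap can be closed directly, with no deformation. Write $\operatorname{Im}\vecRC^{\mathrm{ov}}\equiv\pi\bm\epsilon \pmod{2\pi}$ with $\bm\epsilon\in\{0,1\}^g$.

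First, a standard divisor $\divisor$ is non-special. If it were special, there would be a nonconstant real meromorphic function with poles bounded by $\divisor$. It would have degree $r\le g$. Yet a generic value of its restriction to $B_0^{\mathrm{ov}}$ is attained at least twice there, and at least once on each of the $r$ compact ovals carrying a simple pole, which is too many.

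So by Riemann's vanishing theorem, $f(\ptSig)=\theta(\mu_\basept(\ptSig)-\mu_\basept(\divisor)+\vecRC^{\mathrm{ov}}\mid\Pi^{\mathrm{ov}})$ vanishes exactly on $\divisor$. For $\ptSig\in B_k^{\mathrm{ov}}$, its argument $\bm z$ lies in $\pi\imunit(\bm e_k-\onebf+\bm\epsilon)+\R^g$; moving along the oval changes only real parts, as in your proof of (1). So $f$ is real along $B_k^{\mathrm{ov}}$.

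After one circuit of $B_k^{\mathrm{ov}}$ the argument becomes $\bm z+\Pi\bm e_k$, with $\Pi=\Pi^{\mathrm{ov}}$. Quasi-periodicity gives $\theta(\bm z+\Pi\bm e_k\mid\Pi)=\exp(-\tfrac12\Pi_{kk}-z_k)\,\theta(\bm z\mid\Pi)$, and $\operatorname{Im}z_k=\pi\epsilon_k$. So $f$ returns multiplied by a factor of sign $(-1)^{\epsilon_k}$. Since $f$ has exactly one simple zero on $B_k^{\mathrm{ov}}$, it changes sign exactly once. This forces $\epsilon_k=1$ for every $k$, i.e.\ $\vecRC^{\mathrm{ov}}\in\pi\imunit\onebf+\R^g$.
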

\begin{proof}
	Part~\itemref{BDT1} is~\cite[Lemma~15(2)]{BDdT}, part~\itemref{BDT2} is \cite[Lemma~11(3)]{BDdT}, and part~\itemref{BDT3} is~\cite[Lemma~19]{BDdT}.\footnote{The statements in \cref{prop:omega_and_delta_pos} appear different from those in~\cite{BDdT} due to our choice of normalization $\int_{A_j} \omega_k = 2\pi \imunit \delta_{jk}$, following \cite{Fay}, rather than $\int_{A_j} \omega_k = \delta_{jk}$. Moreover, our convention for $A$- and $B$-cycles is also reversed compared to~\cite{BDdT}.}
\end{proof}

Following Kenyon--Okounkov~\cite{KenyonOkounkovHarnack}, we call a degree-$g$ effective divisor $\divisor$ in $\openspectralcurve(\R)$ a \emph{standard divisor} if it has exactly one point in each compact oval. We denote the set of standard divisors on $\mcurve$ by
\[
\sym^g_{>0} (\mcurve) := \prod_{j=1}^g B_j^{\mathrm{ov}}.
\]
We also denote
\[
\jacpos(\mcurve) := \pi \imunit \onebf + (\R^{g} / \Pi^{\mathrm{ov}} \Z^{g}),
\]
which is a connected component of the real part of the Jacobian $\jac(\mcurve)$ translated by $\pi \imunit \onebf$, where $\onebf:=(1,1,\dots,1) \in \C^g$.

We have the following refinement of \cref{thm:Riemann} for standard divisors.

\begin{proposition} [Boutillier--Cimasoni--de Tili\`ere~{\cite[Lemma~15]{BDdT}}] \label{prop:jacpos}
	For a Harnack curve $\mcurve$ with basepoint $\basept \in B_0^{\mathrm{ov}}$ and symplectic basis $(\bm A^{\mathrm{ov}}, \bm B^{\mathrm{ov}})$ as in~\cref{sec:Harnack}, the Abel map induces a homeomorphism \begin{align*}\mu_{\basept}:  \sym^g_{>0} (\mcurve) &\rightarrow \jacpos(\mcurve)\\
	\divisor &\mapsto \mu_\basept(\divisor).
	\end{align*} 
\end{proposition}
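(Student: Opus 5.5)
The plan is to derive the homeomorphism from the real structure of the Harnack curve and then the degree theory of the Abel map restricted to the real torus $\prod_j B_j^{\mathrm{ov}}$.

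\textbf{Image lands in $\jacpos(\mcurve)$.} Let $\divisor=\ptSig_1+\dots+\ptSig_g$ with $\ptSig_j\in B_j^{\mathrm{ov}}$. Since the basis is oval-adapted, complex conjugation $\sigma$ satisfies $\sigma^*\bm\omega=\overline{\bm\omega}$. To reach $\ptSig_j$ from $\basept\in B_0^{\mathrm{ov}}$, take a path that runs along $B_0^{\mathrm{ov}}$, crosses to $B_j^{\mathrm{ov}}$ along half of a conjugation-anti-invariant cycle homologous to $A_j^{\mathrm{ov}}$, and then runs along $B_j^{\mathrm{ov}}$.
\begin{itemize}
\item The pieces along real ovals contribute real vectors; this is the argument behind \cref{prop:omega_and_delta_pos}\itemref{BDT1}.
\item The half-cycle from $B_0^{\mathrm{ov}}$ to $B_j^{\mathrm{ov}}$ and its conjugate together form $\pm A_j^{\mathrm{ov}}$. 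So its integral has imaginary part equal to half of $2\pi\imunit e_j$, i.e.\ $\pi\imunit e_j$, modulo real vectors.
\end{itemize}
Summing over $j$ gives $\mu_\basept(\divisor)\in\pi\imunit\onebf+\R^g$ modulo the lattice, which is exactly $\jacpos(\mcurve)$. The map is continuous.

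\textbf{Injectivity.} Suppose $\mu_\basept(\divisor)=\mu_\basept(\divisor')$ for standard divisors $\divisor\neq\divisor'$. By Abel's theorem there is a nonconstant meromorphic $f$ with $\divisor(f)=\divisor-\divisor'$, and $r(\divisor)\ge1$. Replacing $f$ by $f+\bar f\circ\sigma$ or $\imunit(f-\bar f\circ\sigma)$, we may take $f$ real, so $f$ is real on every oval.

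Now count zeros and poles oval by oval. Each compact oval $B_j^{\mathrm{ov}}$ carries exactly one zero and one pole of $f$, unless they cancel. A real function with a simple zero and a simple pole on a circle changes sign an even number of times, which is consistent, so this alone gives no contradiction.

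Instead, use the standard M-curve argument. Take a real function with at most $g$ poles, one on each compact oval. Its degree is at most $g$, and since $f$ is real, the real map $f:\mcurve(\R)\to\R\Pbb^1$ has even degree on each oval. An oval containing exactly one pole has odd degree, hence must contain a second pole. This is a contradiction unless $f$ has no pole there. Applying this to every compact oval forces $f$ to have no poles at all, so $f$ is constant, and therefore $\divisor=\divisor'$.

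\textbf{Surjectivity.} Injectivity, combined with Riemann's theorem \cref{thm:Riemann} (equivalently, $h^0(\divisor)=1$), shows that $\mu_\basept$ is a local homeomorphism of $g$-dimensional real manifolds. Here one checks that the differential is the real part of the matrix $(\omega_k(\ptSig_j))$, which is nondegenerate because the divisor is non-special.

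The domain $\prod_j B_j^{\mathrm{ov}}$ is a compact $g$-torus and the target $\jacpos(\mcurve)$ is a connected real $g$-torus, using \cref{prop:omega_and_delta_pos}\itemref{BDT2} to see that $\Pi^{\mathrm{ov}}$ is real. An injective local homeomorphism between compact connected manifolds of the same dimension has open and closed image, so it is surjective, and hence a homeomorphism.

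\textbf{Main obstacle.} I expect the hardest step to be the parity argument for real meromorphic functions on the ovals, which gives injectivity and non-speciality. That is where the maximality of the real locus ($g+1$ ovals) is genuinely used.
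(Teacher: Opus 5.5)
The paper gives no proof of its own here; it quotes the result from Boutillier--Cimasoni--de Tili\`ere, so your argument has to stand on its own. Its architecture is the standard one. The check that $\mu_{\basept}$ lands in $\pi\imunit\onebf+\R^g$, using half of an anti-invariant $A$-cycle, is correct. The final topological step is also fine: an injective local homeomorphism from the compact torus $\prod_j B_j^{\mathrm{ov}}$ to the connected torus $\jacpos(\mcurve)$ is a homeomorphism.

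The gap is exactly in the step you flagged. Your claim that a real meromorphic function restricts to a map $B\to\R\Pbb^1$ of even degree on every oval is false. A simple real pole on an oval gives odd degree, and nothing forbids one. Your own next sentence uses this, so the ``contradiction'' you derive comes from the false premise. You can also see that the argument cannot be right as stated: it never uses that the outer oval $B_0^{\mathrm{ov}}$ carries no pole. It would therefore equally show that a real divisor of degree $g+1$ with one point on each of the $g+1$ ovals has $h^0=1$, whereas Riemann--Roch gives $h^0\geq 2$.

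The missing idea is to use $B_0^{\mathrm{ov}}$ to force an extra zero. Let $f\in L(\divisor')$ be real and nonconstant, with $k\geq 1$ simple poles on $k$ distinct compact ovals.
\begin{enumerate}
\item On each pole-carrying oval, remove the pole. On the remaining open arc, $f$ is real-valued and tends to $+\infty$ at one end and $-\infty$ at the other, since the pole is simple. By the intermediate value theorem, $f$ takes every real value on that oval.
\item Since $f$ has no pole on $B_0^{\mathrm{ov}}$, $c:=f(p)$ is real for any $p\in B_0^{\mathrm{ov}}$.
\item Then $f-c$ has only $k$ poles but at least $k+1$ distinct zeros: one at $p$ and one on each pole-carrying oval. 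This is a contradiction.
\end{enumerate}
$L(\divisor')$ is stable under $f\mapsto\overline{f\circ\sigma}$, so it is spanned by its real elements. The argument therefore gives $h^0(\divisor')=1$ for every standard divisor.

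This is what you actually need in two places. It gives injectivity via Abel's theorem. It also gives non-speciality, which is what makes the matrix $(\omega_k(\ptSig_j))$ invertible in your local-homeomorphism step; injectivity on $\prod_j B_j^{\mathrm{ov}}$ alone does not directly yield that. With this replacement, the rest of your proof goes through.
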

In particular, for standard divisors always correspond to Case~\itemref{RT2} of \cref{thm:Riemann}.

\subsection{Angle maps}

 An \emph{angle map} is a bijection 
\[
\nu:\zzG \to \mcurve \setminus \openspectralcurve
\]
between zig-zag paths and points at infinity such that for all $\alpha \in \zzG$, we have that \[\nu(\alpha) \in \mcurve \cap \partial V^{\mathrm{tor}}_{\Eside(\alpha)}.\] When viewed as a tuple, we write 
$
\bm \nu = (\nu(\alpha))_{\alpha \in \zzG}.
$ For brevity, we often write $\alpha$ for $\nu(\alpha)$.

\subsection{The spectral transform}

A \emph{spectral datum associated to $\Gtor$} is a triple $(\mcurve,\divisor,\nu)$ where:
\begin{enumerate}
  \item $\mcurve$ is a Harnack curve with Newton polygon $N$. \footnote{The space of Harnack curves with Newton polygon $N$ has four
components related by $(z,w)\mapsto(\pm z,\pm w)$. The choice of
Kasteleyn signs fixes the relevant component, which we use throughout.}
  \item $\divisor \in \sym^g_{>0} (\mcurve)$ is a standard divisor.
  \item $\nu$ is an angle map.
\end{enumerate}

Let $\bw$ be a fixed reference white vertex of $\Gtor$. Consider the $\bw$-column of the adjugate matrix 
$\bQ(\ptC)$ of $\bK(\ptC)$. By~\cite{KenyonOkounkovHarnack}, the points where the entries of this column simultaneously vanish, i.e.,
\[
\bQ(\ptC)_{\bb\bw} = 0, \quad \text{ for all }\bb \in \blackvertices(\Gtor),
\]
defines a standard divisor 
\[\divisor = \sum_{j=1}^g \divptgeom_j.\] 

Following \cite[Section~7.2]{GK} and~\cite[Section~3.1]{KenyonOkounkovHarnack}, define the \emph{Casimirs} \[\veccasimirs = (\casimir_\alpha)_{ \alpha \in \zzG }, \qquad
\casimir_\alpha := [\wt](\alpha). 
\]
Since $\sum_{\alpha \in \zzG} [\alpha]=0$, we have
$
\prod_{\alpha \in \zzG} \casimir_\alpha = 1.
$ 
Recall that the tentacles of $\amoebaSig$ are in bijection with the points at infinity of $\openspectralcurve$. For each $\alpha \in \zzG$, there is a point at infinity $\nu(\alpha) \in \mcurve \cap \partial V^{\mathrm{tor}}_{\Eside(\alpha)}$ determined by the property that the tentacle of $\amoebaSig$ corresponding to a zig-zag path $\alpha$ with $[\alpha]$ is asymptotic to the line \begin{equation}\label{eq:tentacles}
\< [\alpha], \ptR \> + \log |\casimir_\alpha| = 0.
\end{equation}
This determines an angle map
\[
\nu:\zzG \to \mcurve \cap \bigcup_{\text{sides $\Eside$ of $N$}} \partial V^{\mathrm{tor}}_\Eside.
\]

The map
\begin{align*}
 \lambda: \{\text{positive edge weights on $\Gtor$}\}/\text{gauge} & \to \{\text{spectral data associated to $\Gtor$}\}\\
[\wt]&\mapsto(\spectralcurve,\divisor,\nu)
\end{align*}
is called the \emph{spectral transform}.

\begin{theorem}\label{thm:spectral_transform_birational}
  The spectral transform is a bijection.
\end{theorem}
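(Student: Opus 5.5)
This theorem is a known result that the paper cites rather than proves: it combines Kenyon--Okounkov~\cite{KenyonOkounkovHarnack}, who showed the spectral transform is a bijection, with Fock~\cite{Fock} and Goncharov--Kenyon~\cite{GK}. My plan therefore assembles the cited results instead of rederiving them. The argument splits into well-definedness, injectivity and surjectivity.

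\emph{Well-definedness.} By \cref{thm:spectral_curve_is_harnack}, $\mcurve$ is a Harnack curve with Newton polygon $N$. The sign component is fixed by the Kasteleyn signs. By \cite{KenyonOkounkovHarnack}, the common zeros of the $\bw$-column of $\bQ$ form a standard divisor. The angle map is well defined by \eqref{eq:tentacles}, since each tentacle corresponds to a unique point at infinity on the appropriate side $\Eside(\alpha)$. Multiplicities need care when several zig-zag paths share a side; Harnackness makes the boundary points distinct in generic position, and otherwise one matches them by the tentacle asymptotics.

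\emph{Dimension count.} The space of weights modulo gauge has dimension $|\faces(\Gtor)|+1 = 2\,\mathrm{Area}(N)+1$. The data are faces with one relation, plus the two homology weights. Harnack curves with Newton polygon $N$ are parametrized by coefficients up to scaling, or equivalently by the areas of the bounded amoeba components and the tentacle positions. The tentacle positions are the Casimirs, giving $|\zzG|-1$ parameters with one relation, together with $g$ oval areas. The standard divisor contributes a further $g$ parameters. Pick's formula, $2\,\mathrm{Area}=2g+|\zzG|-2$, makes the two dimensions agree. This agreement is consistent with a bijection but does not prove one.

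\emph{Injectivity and surjectivity.} I would use Fock's explicit inverse $\lambda^{-1}_{\mathrm{Fock}}$ from \eqref{eq:fock-weight-intro}. First, check that for a spectral datum $(\mcurve,\divisor,\nu)$ the wedge weights are positive. This follows from \cref{prop:omega_and_delta_pos} and \cref{prop:jacpos}: the theta arguments lie in $\jacpos$, so the theta values are real and nonvanishing with a constant sign. Second, show that the Kasteleyn operator built from these weights has spectral data $(\mcurve,\divisor,\nu)$. Here the explicit kernel, given by prime forms times theta functions, supplies the adjugate column vanishing exactly at $\divisor$, and the determinant vanishes on $\mcurve$, which gives surjectivity. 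Third, show that $\lambda^{-1}_{\mathrm{Fock}}\circ\lambda=\mathrm{id}$. For this, note that the wedge weights of any positive $[\wt]$ are recovered from the kernel of $\bK$ on $\mcurve$. That kernel is a meromorphic section with prescribed divisor, unique up to scaling, so Fock's formula reproduces $[\wt]$, which gives injectivity.

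\emph{Main obstacle.} The hard step is this last uniqueness statement: a Kasteleyn kernel section is determined by $(\mcurve,\divisor,\nu)$. It amounts to a Riemann--Roch computation showing the relevant line bundle has a one-dimensional space of sections, with poles controlled by the angle map at infinity and the divisor nonspecial by \cref{prop:jacpos}. In the paper this step is exactly \cite{KenyonOkounkovHarnack,Fock}, which I would cite.
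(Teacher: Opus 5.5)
Your overall route matches the paper, which gives no proof of its own and cites the literature. It attributes the hexagonal case to Kenyon--Okounkov \cite{KenyonOkounkovHarnack}, explicit inverses for complex spectral data to Fock \cite{Fock} and George--Goncharov--Kenyon \cite{GGK}, and the positive version of Fock's inverse for general minimal graphs to Boutillier--Cimasoni--de Tili\`ere \cite{BDdT}. Two consequences for your sketch. Kenyon--Okounkov alone does not give the bijection for general minimal $\Gtor$. Your dimension count plays no role in the argument.

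There is a genuine error in the one step specific to positive data. You need this step for surjectivity, since Fock's weights must land among positive weights modulo gauge.

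\emph{The theta arguments are not in $\jacpos$.} The arguments $\mu^{\mathrm{ov}}_{\basept}(\dam(\bface))-\mu^{\mathrm{ov}}_{\basept}(\divisor)+\vecRC^{\mathrm{ov}}$ lie in $\R^g$. By \cref{prop:omega_and_delta_pos} and \cref{prop:jacpos}, the first term is real, and the two shifts by $\pi\imunit\onebf$ from the divisor and the Riemann constants cancel. Since $\Pi^{\mathrm{ov}}$ is also real, every term of the theta series is positive. This is the reason the paper uses (cf.\ \cref{lem:theta_tropicalization}).

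\emph{Your stated reason would fail.} On $\pi\imunit\onebf+\R^g$ theta is real but does vanish. For $g=1$ one has $\theta(\pi\imunit+\tfrac12\Pi\mid\Pi)=0$, because the terms $m$ and $-1-m$ cancel. So placing the arguments in $\jacpos$ would not give nonvanishing.

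\emph{Individual wedge weights are not positive.} Since $X_{\bface}=(-1)^{|\partial\bface|/2+1}\prod_j\wt(\mathfrak w_j)>0$, the two wedge weights around a quadrilateral face have negative product. What is positive is $[\kappa](\gamma)\prod_j\wt(\mathfrak w_j)$. The factor $\exp\bigl(\int_{\basept}^{\alpha}\omega_{\beta_+-\beta_-}\bigr)$ is real, but its sign depends on how the real path along the outer oval passes the poles $\beta_\pm$. Matching these signs, together with the explicit minus sign, against the Kasteleyn signs is exactly the content of \cite{BDdT}. You should invoke that result here rather than derive positivity from $\jacpos$.
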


\cref{thm:spectral_transform_birational} was originally proved for positive spectral data on the hexagonal lattice by Kenyon--Okounkov~\cite{KenyonOkounkovHarnack} through an indirect argument. Explicit inverses were later constructed for complex spectral data by Fock~\cite{Fock} and George--Goncharov--Kenyon~\cite{GGK}. The positive counterpart of Fock’s inverse map stated above for general graphs is due to Boutillier--Cimasoni--de Tili\`ere~\cite{BDdT}.

\begin{example}
\label{ex:running_graph_spectral_transform_genus_one}

\begin{figure}
\centering

\begin{tikzpicture}

\node[
    anchor=south west,
    inner sep=0
] (amoeba) at (0,0) {
    \includegraphics[width=.55\textwidth]{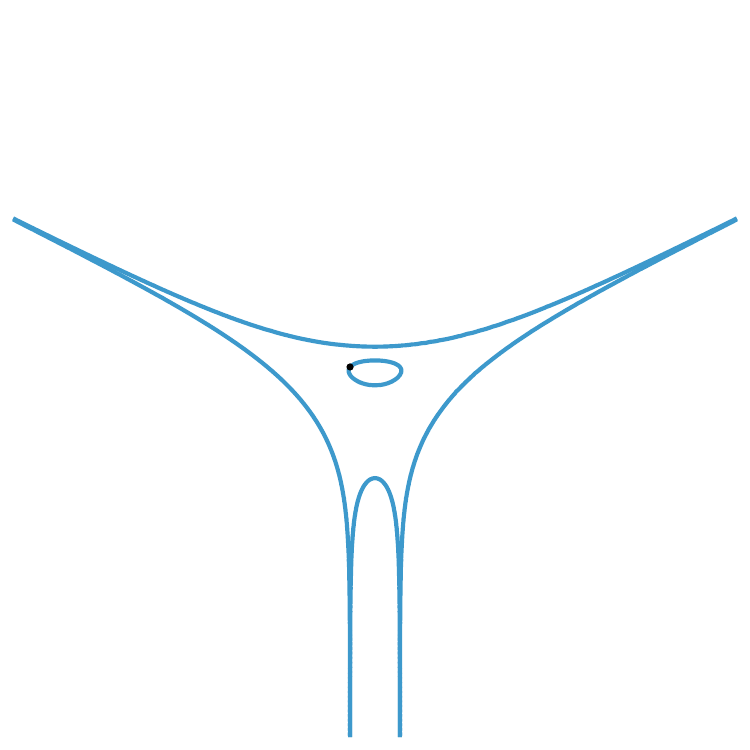}
};

\begin{scope}[
    x={(amoeba.south east)},
    y={(amoeba.north west)}
]


\node[
    green!60!black,
    anchor=east
] at (-0.015,0.715)
    {$\beta$};

\node[
    blue,
    anchor=west
] at (1.015,0.715)
    {$\gamma$};

%

\node[
    red,
    anchor=north east
] at (0.475,-0.015)
    {$\alpha_1$};

\node[
    red,
    anchor=north west
] at (0.535,-0.015)
    {$\alpha_2$};


\node[
    anchor=south west,
    inner sep=1pt
] at (0.405,0.555)
    {$\Log(D)$};

\end{scope}

\end{tikzpicture}

\caption{
The real locus of the amoeba of the spectral curve, together with
the standard divisor and angle map.
}
\label{fig:running-example-amoeba}

\end{figure}

We continue with the running example and compute its spectral transform. Choose $\bw_1$ as the reference white vertex. The
$\bw_1$-column of $\bQ$ is
\[
\bQ_{\bullet,\bw_1}(z,w)
=
\begin{pmatrix}
\dfrac{
X_1\rho_w w-1-X_1X_2X_3
}{
X_1\rho_w w
}
\\[4mm]
\dfrac{
\rho_z z+X_1X_2X_3
}{
\rho_z\rho_w zw
}
\\[4mm]
\dfrac{
X_2X_3\rho_z z-X_2X_3-\rho_z\rho_w zw
}{
\rho_z\rho_w^2zw^2
}
\\[4mm]
-\dfrac{
X_2X_3
\left(
X_1\rho_w w+\rho_z z-1
\right)
}{
\rho_z\rho_w^2zw^2
}
\end{pmatrix},
\]
from which we get the standard divisor
\[
D=\left(
-\frac{X_1X_2X_3}{\rho_z},
\frac{1+X_1X_2X_3}{X_1\rho_w}\right).
\]

We next compute the angle map.  Let
$
\alpha_1,\alpha_2,\beta,\gamma\in\zzG
$
denote the four zig-zag paths, labelled so that
\[
[\alpha_1]=[\alpha_2]=(-1,0),
\qquad
[\beta]=(1,2),
\qquad
[\gamma]=(1,-2),
\]
and 
\[
\casimir_{\alpha_1}
=
{\rho_z},
\qquad
\casimir_{\alpha_2}
=
\frac{\rho_z}{X_1X_2},
\qquad
\casimir_{\beta}
=
\frac{X_2X_3}{\rho_z\rho_w^2},
\qquad
\casimir_{\gamma}
=
\frac{X_1\rho_w^2}{X_3\rho_z}.
\]
The angle map $\nu$ is only nontrivial for the horizontal side of $N$. The two tentacles are along $\log|w| \to -\infty$ with
\[
x + \log|C_{\alpha_1}|=0, \qquad x + \log|C_{\alpha_2}|=0.
\]
\cref{fig:running-example-amoeba} illustrates the spectral transform for
\[
X_1 = 2,\quad X_2 = 2,\quad X_3 = 1,\quad \rho_z = 2,\quad \rho_w = 2.
\]
\end{example}

\subsection{Fock's inverse spectral transform}
\label{sec:fock_weights}

Via $\nu$, we identify $\Z^{\zzG}$ with $\Z^{\mcurve \setminus \openspectralcurve}$, i.e., the group of divisors supported on $\mcurve \setminus \openspectralcurve \subset B_0^{\mathrm{ov}}$. The \emph{discrete Abel map} \[\dam : \blackvertices(\G) \sqcup \whitevertices(\G) \sqcup \faces(\G) \ra \Z^{ \zzG }\] assigns to each vertex and face of $\G$ a divisor supported on $\zzG$ according to the following rules:
\begin{enumerate}
	\item Normalize by setting $\dam(\tilde \bw)=0$ for a reference white vertex $\tilde \bw$.
	\item For a face $\tilde \bface$ incident to a black vertex $\tilde \bb$, set 
	\[
	\dam(\tilde\bb)-\dam(\tilde\bface)=\alpha,
	\]
	where $\alpha$ is the zig-zag path in $\Gtor$ whose lift to $\G$ separates $\tilde\bface$ and $\tilde\bb$.
	\item For a face $\tilde\bface$ incident to a white vertex $\tilde\bw$, set 
	\[
	\dam(\tilde\bw)-\dam(\tilde\bface)=-\alpha,
	\]
	where $\alpha$ is the zig-zag path in $\Gtor$ whose lift to $\G$ separates $ \tilde\bface$ and $ \tilde\bw$.
\end{enumerate}

Recall that we fix a basepoint $\basept \in B_0^{\mathrm{ov}}$. Let $\widetilde \mcurve$ denote the universal cover of $\mcurve$. We fix a lift $\widetilde \basept$ of $\basept$ and for each zig-zag path $\alpha \in \zzG$, we fix a lift $\widetilde \alpha \in \widetilde \mcurve$. Since $\widetilde \mcurve$ is simply connected, any two paths in
$\widetilde \mcurve$ joining $\widetilde\ptSig$ to $\widetilde\ptSig'$ are homotopic relative to their endpoints. For a $1$-form $\omega$, we
use the convention that
\[
  \int_{\ptSig}^{\ptSig'} \omega
\]
denotes the integral along any path in $\widetilde \mcurve$ joining the chosen lift of $\widetilde\ptSig$ to the chosen lift of $\widetilde\ptSig'$. For a holomorphic $1$-form, this integral is independent of the chosen path. For the meromorphic differentials of the third kind below, we also assume that the path avoids the poles. Although the integrals may depend on the path, we only consider their exponentials which are again path independent. Unless stated otherwise, all integrals in this section are understood in this sense.

A \emph{wedge} of $\Gtor$ is a $1$-chain
\[
\mathfrak w=\be_+-\be_-,
\]
where
\(
\be_-=\bb\bw_-,
\) and \(
\be_+=\bb\bw_+
\)
are two edges incident to a common black vertex $\bb$. Let
$\alpha,\beta_-,\beta_+\in\zzG$ be the zig-zag paths determined by
\[
\be_-\in\alpha\cap\beta_-,
\qquad
\be_+\in\alpha\cap\beta_+.
\]
Let $\bface_-$, respectively $\bface_+$, be the face separated from $\bb$
by $\beta_-$, respectively $\beta_+$; see \cref{fig:wedge}.

Let $(\spectralcurve,\divisor,\nu)$ be a spectral datum associated to $\Gtor$. We assign to a $\mathfrak w$ the \emph{wedge weight}
\begin{equation}
\label{eq:fock:wedge}
\wt(\mathfrak w)
:=
-\frac{
  \theta(\mu_\basept^{\mathrm{ov}}(\dam(\bface_-))-\mu^{\mathrm{ov}}_\basept(\divisor)+\vecRC^{\mathrm{ov}}\mid\Pi^{\mathrm{ov}})
}{
  \theta(\mu_\basept^{\mathrm{ov}}(\dam(\bface_+))-\mu^{\mathrm{ov}}_\basept(\divisor)+\vecRC^{\mathrm{ov}}\mid\Pi^{\mathrm{ov}})
}
\exp\left(
  \int_{\basept}^{\alpha}\omega_{\beta_+-\beta_-}^{\mathrm{ov}}
\right).
\end{equation}
Here, $\omega_{\beta_+-\beta_-}^{\mathrm{ov}}$ is the $A$-normalized differential of the third kind with residue divisor $\beta_+-\beta_-$, and the Abel maps and $
  \int_{\basept}^{\alpha}\omega_{\beta_+-\beta_-}^{\mathrm{ov}}$ use our convention for integrals above. 

Any cycle $\gamma$ in $\Gtor$ can be decomposed as a sum of wedges,
\[
\gamma=\mathfrak w_1+\cdots+\mathfrak w_r.
\] 
We define its cycle weight by 
\[
[\wt](\gamma):= [\kappa](\gamma)
\prod_{j=1}^r\wt(\mathfrak w_j),
\]
where $[\kappa]$ is the gauge equivalence class of the Kasteleyn signs. In particular, the face weights are given by
\[
X_{\bface}
:=
(-1)^{\frac{|\partial\bface|}{2}+1}
\prod_{j=1}^r\wt(\mathfrak w_j).
\]
The corresponding gauge-equivalence class of
edge weights on $G$ are called \emph{Fock weights}.

\begin{theorem}[Fock~\cite{Fock}]\label{thm:fock_inverse}
  The assignment of Fock weights to spectral data inverts the spectral transform.
\end{theorem}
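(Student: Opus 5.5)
Fock's theorem is being cited, so the plan is to reproduce the standard argument rather than derive anything new. The strategy is to show that the Fock weights $[\wt]$ built from $(\mcurve,\divisor,\nu)$ have spectral transform equal to $(\mcurve,\divisor,\nu)$; since $\lambda$ is a bijection by \cref{thm:spectral_transform_birational}, this gives $\lambda^{-1}_{\mathrm{Fock}}=\lambda^{-1}$.

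The key construction is an explicit kernel vector of the Kasteleyn matrix. For each vertex $\bv$ of $\G$ and each $\ptSig\in\mcurve$, I will define a meromorphic function $\psi_{\bv}(\ptSig)$ built from three ingredients:
\begin{itemize}
\item the theta factor $\theta(\mu_\basept(\ptSig)+\mu_\basept(\dam(\bv))-\mu_\basept(\divisor)+\vecRC)$;
\item a product of prime-form ratios $E(\ptSig,\alpha)$ over the zig-zag divisor $\dam(\bv)$, rewritten via exponentials of third-kind differentials;
\item normalising constants from the wedge weights.
\end{itemize}
The degree of $\dam(\bv)$ makes $\psi_{\bv}$ a well-defined section on $\mcurve$ up to the monodromy factor $\ptC^{\hom[\cdot]}$. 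By Riemann's theorem (\cref{thm:Riemann}), together with \cref{prop:jacpos} to exclude the degenerate case, its zeros away from infinity are a translate of the divisor. In particular, for the reference white vertex $\psi_{\bw}$ has poles exactly along $\divisor$ and prescribed zeros and poles at the points at infinity $\nu(\alpha)$.

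The central local identity is Fay's trisecant identity, applied around each black vertex $\bb$. I will show that
\[
\sum_{\bw\sim\bb}\wt(\bb\bw)\kappa(\bb\bw)\ptC^{\hom[\bb\bw]}\psi_{\bw}(\ptSig)=0
\]
for all $\ptSig$. At a degree-$d$ black vertex the zig-zag paths through the incident edges give $d$ points, and the relation reduces, after dividing by a common factor, to a telescoping sum of three-term trisecant identities. The wedge weights \eqref{eq:fock:wedge} are precisely the coefficients making consecutive terms cancel, and the theta ratio with $\dam(\bface_\pm)$ arises from the $\dam$ rules across the wedge. The analogous statement for white vertices gives a left kernel.

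It then follows that for $\ptC\in\openspectralcurve$ the vector $\psi(\ptSig)$ spans $\ker\bK(\ptC)$, so $\mcurve$ is contained in the spectral curve of $[\wt]$. The Newton polygons agree, with $P$ determined up to scalar, so the curves coincide. The adjugate column $\bQ_{\bullet\bw}$ is proportional to this kernel vector, so its common zeros are the poles of the normalised $\psi$, namely $\divisor$. The tentacle asymptotics \eqref{eq:tentacles}, read off from the behaviour of $\psi$ at $\nu(\alpha)$ (where the $\exp\int\omega$ factors produce $\casimir_\alpha$), recover the angle map.

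Two points need checking. The first is positivity of the resulting weights, and also that $[\kappa]$ gives the correct signs; this comes from \cref{prop:omega_and_delta_pos}, since the theta arguments are in $\pi\imunit\onebf+\R^g$ and the differentials are real on $B_0^{\mathrm{ov}}$. The second, which I expect to be the main obstacle, is the bookkeeping of monodromies along the chosen lifts and paths. This is needed to verify that the wedge products around each face close up consistently, and that the cycle weights are independent of the decomposition into wedges.
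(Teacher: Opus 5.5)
The paper does not prove this theorem; it cites Fock, and Boutillier--Cimasoni--de Tili\`ere for the positive version. Your outline follows the route of those references: a theta/prime-form kernel vector, Fay's identity telescoped around black vertices, identification of the curve, divisor and angle map, and reality input from \cref{prop:omega_and_delta_pos}. Two steps, however, are wrong as written.

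\emph{Positivity.} The theta arguments in \eqref{eq:fock:wedge} do not lie in $\pi\imunit\onebf+\R^g$. By part~\itemref{BDT1} of \cref{prop:omega_and_delta_pos}, $\mu_\basept(\dam(\bface))\in\R^g$. On the other hand, $\mu_\basept(\divisor)$ and $\vecRC$ each lie in $\pi\imunit\onebf+\R^g$, by \cref{prop:jacpos} and part~\itemref{BDT3}. So the two imaginary shifts cancel and $\mu_\basept(\dam(\bface))-\mu_\basept(\divisor)+\vecRC$ is real. That cancellation, together with reality of $\Pi^{\mathrm{ov}}$, is what makes every term of the theta series positive. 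An argument in $\pi\imunit\onebf+\R^g$ would give terms with signs $(-1)^{m_1+\cdots+m_g}$ and no positivity at all. Similarly, reality of $\omega_{\beta_+-\beta_-}$ on $B_0^{\mathrm{ov}}$ only makes the exponential factor real. Its sign depends on how the integration path passes the poles $\beta_\pm$ on the outer oval. Reconciling that sign with the minus sign in \eqref{eq:fock:wedge} and with $[\kappa]$ is a separate step.

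\emph{The divisor.} Your relation at black vertices says $\psi^{\top}\bK(\ptC)=0$. So $\psi$ is a white-indexed \emph{left} kernel vector, not an element of $\ker\bK(\ptC)\subset\C^{\blackvertices}$; you have the black- and white-vertex relations swapped. The column $\bQ_{\bullet\bw}$ is black-indexed and lies in $\ker\bK$, so it cannot be proportional to $\psi$. The argument should run as follows.
\begin{itemize}
\item The identity $\bQ\bK=P\cdot I$ makes each \emph{row} of $\bQ$ a left kernel vector.
\item Jacobi's formula $dP=\operatorname{tr}(\bQ\,d\bK)$, with smoothness of $\openspectralcurve$, gives $\bQ\neq 0$ on $\openspectralcurve$.
\item Hence locally $\bQ=u\,v^{\top}$, with $u$ spanning $\ker\bK$, $v$ a nonvanishing frame of the left kernel, and $\psi=s\,v$.
\item The $\bw$-column is $v_{\bw}\,u$, so the spectral divisor is the zero divisor of $v_{\bw}$.
\item For the reference vertex, $\dam(\bw)=0$ gives $\psi_{\bw}(\ptSig)=\theta(\mu_\basept(\ptSig)-\mu_\basept(\divisor)+\vecRC\mid\Pi)$. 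This has \emph{zeros}, not poles, exactly at $\divisor$.
\end{itemize}
Thus $\divisor$ equals the spectral divisor plus the zero divisor of $s$, i.e.\ plus the common zeros of all the $\psi_{\bw}$ on $\openspectralcurve$. You still have to exclude these common zeros. A degree count does it: once positivity is established, Kenyon--Okounkov's theorem says the spectral divisor is standard, hence of degree $g$, so $s$ has no zeros.
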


\section{Tropical spectral curves}\label{sec:trp_spec_curve}

\subsection{Puiseux series}

Throughout, we use the max convention for tropicalization. The \emph{field of formal Puiseux series} is
\[
{\mathbb C\{\!\{ t^{-1}\}\!\}}=\bigcup_{n\geq1}\C(\!(t^{-1/n})\!).
\]
An element $c(t) \in {\mathbb C\{\!\{ t^{-1}\}\!\}}$ of this field has the form
\[
c(t)
=
c_1t^{a_1}+c_2t^{a_2}+\cdots,
\]
where $c_i\neq0$ and
$
\cdots<a_2<a_1\in\Q
$
have a common denominator. The coefficient $c_1$ is called the
\emph{leading coefficient} and is denoted by $\lc(c(t))$. The field
${\mathbb C\{\!\{ t^{-1}\}\!\}}$ carries the valuation
\[
\begin{aligned}
\val:{\mathbb C\{\!\{ t^{-1}\}\!\}}&\ra\Q\cup\{-\infty\},\\
c(t)&\mapsto a_1,
\end{aligned}
\]
with $\val(0):=-\infty$.

Let $\Puis \subset {\mathbb C\{\!\{ t^{-1}\}\!\}}$ be the subfield of \emph{convergent Puiseux series}, i.e., those Puiseux series that have a positive radius of convergence as $t \to \infty$
The semifield $\Puispos$ of \emph{positive convergent Puiseux series} consists of those
$c(t)\in\Puis$ that are convergent, have real coefficients, and whose leading coefficient is positive.

Throughout this paper, we will only consider convergent Puiseux series. Therefore, we will simply refer to them as Puiseux series and omit the qualifier ``convergent''.

\subsection{Tropicalization}

Let $[\wt](t)$ be $\Puispos$-valued edge weights modulo gauge
equivalence, and let
\[
P(\ptC;t)
=
\sum_{\pt \in\NZ}
c_{\pt}(t)\ptC^\pt
\]
be the corresponding family of characteristic polynomials. Set
$
c_{\pt}^\trop:=\val(c_{\pt}(t)).
$
The \emph{tropical characteristic polynomial} is
\begin{equation}
\label{eq:trop_char_poly_formula}
P^\trop(\ptR)
:=
\max_{\pt\in\NZ}
\left(
\langle \pt,\ptR \rangle+c_{\pt}^\trop
\right),
\end{equation}
and the \emph{embedded tropical spectral curve} is
\begin{equation} \label{eq:trop_spc_emb}
\tropmcurve_{\mathrm{emb}}
:=
\left\{
\ptR \in\R^2
\;\middle|\;
P^\trop(\ptR)
\text{ achieves its maximum at least twice}
\right\}.
\end{equation}
Let $\mcurve(t)$ be the corresonding family of spectral curves. By the fundamental theorem of tropical geometry~\cite[Theorem~4.6]{MikhalkinAmoebas},
\begin{equation}
\tropmcurve_{\mathrm{emb}}=\lim_{t\to\infty}\frac{1}{\log t}\Log(\mcurve(t)), 
\label{eq:def_tropicalization_of_hypersurface}
\end{equation} 
where the limit is taken in the Hausdorff metric on subsets of $\R^2$.

The function
\[
c^\trop:\NZ\ra\R
\]
induces a regular subdivision of $N$ in the sense of
\cite{GKZ}. Namely, consider the upper hull
\begin{equation}
\label{eq:graph}
\Conv
\{
(\pt;z)\in\NZ\times\R
|
z\leq c^\trop_{\pt}
\}.
\end{equation}
The cells of the subdivision are the projections to $N$ of the upper faces of
\eqref{eq:graph}. The subdivision of $\R^2$ into the domains of linearity of
$P^\trop$ and the regular subdivision are dual polyhedral
complexes; see
\cite[Proposition~3.1.6]{MaclaganSturmfels} and \cref{fig:embedded-abstract-dual-subdivision} for an example.

\begin{example}
\label{ex:running_graph_tropical_spectral_curve}

\begin{figure}
\centering

\begin{subfigure}[c]{0.58\textwidth}
\centering

\begin{tikzpicture}[
    scale=1.05,
    line cap=round,
    line join=round,
    vertex/.style={
        circle,
        fill=black,
        inner sep=0pt,
        minimum size=5pt
    }
]


\draw[curveblue,line width=1.2pt]
    (-2,1)
    -- node[black,midway,above=3pt] {$4$}
    (2,1);

\draw[curveblue,line width=1.2pt]
    (2,1)
    -- node[black,midway,right=3pt] {$1$}
    (1,0);

\draw[curveblue,line width=1.2pt]
    (1,0)
    -- node[black,midway,below=3pt] {$2$}
    (-1,0);

\draw[curveblue,line width=1.2pt]
    (-1,0)
    -- node[black,midway,left=3pt] {$1$}
    (-2,1);


\draw[curveblue,line width=1.2pt]
    (-2,1) -- (-4,2);

\draw[curveblue,line width=1.2pt]
    (2,1) -- (4,2);

\draw[curveblue,line width=1.2pt]
    (-1,0) -- (-1,-2);

\draw[curveblue,line width=1.2pt]
    (1,0) -- (1,-2);


\node[vertex] at (-2,1) {};
\node[vertex] at (2,1) {};
\node[vertex] at (-1,0) {};
\node[vertex] at (1,0) {};


\node[below left] at (-2,1)
    {$(-2,1)$};

\node[below right] at (2,1)
    {$(2,1)$};

\node[below left=3pt] at (-1,0)
    {$(-1,0)$};

\node[below right=3pt] at (1,0)
    {$(1,0)$};

\end{tikzpicture}

\caption{}
\label{fig:running-example-tropical-spectral-curve-embedded}
\end{subfigure}%
\hfill%
\begin{subfigure}[c]{0.30\textwidth}
\centering

\begin{tikzpicture}[
    scale=1.35,
    line cap=round,
    line join=round
]


\draw[black,line width=1pt]
    (0,0)
    -- (1,-2)
    -- (-1,-2)
    -- cycle;


\draw[black,line width=1pt]
    (0,-1) -- (0,0);

\draw[black,line width=1pt]
    (0,-1) -- (1,-2);

\draw[black,line width=1pt]
    (0,-1) -- (0,-2);

\draw[black,line width=1pt]
    (0,-1) -- (-1,-2);


\node[bvert] at (0,0) {};
\node[bvert] at (0,-1) {};
\node[bvert] at (0,-2) {};
\node[bvert] at (-1,-2) {};
\node[bvert] at (1,-2) {};

\end{tikzpicture}

\caption{}
\label{fig:running-example-newton-subdivision}
\end{subfigure}

\caption{
The embedded tropical spectral curve associated to
$
P^\trop(x,y)
=
\max\{0,1-y,1-2y,-x-2y,x-2y\}
$~(A) and the corresponding dual subdivision of its Newton polygon~(B).
The bounded edges of the tropical curve are labeled by their lattice
lengths.
}
\label{fig:running-example-tropical-spectral-curve}

\end{figure}

We return to the running example of
\cref{ex:running_graph_zigzags_genus_one} and choose
\[
X_1(t)=t,\qquad
X_2(t)=t,\qquad
X_3(t)=1,\qquad
\rho_z(t)=t,\qquad
\rho_w(t)=1.
\]
The limit \eqref{eq:def_tropicalization_of_hypersurface} is illustrated in \cref{fig:running-example-kapranov}. The tropical characteristic polynomial is
\[
P^\trop(x,y)
=
\max
\{
0,
1-y,
1-2y,
-x-2y,
x-2y
\}.
\]
Its tropical curve is shown in \cref{fig:running-example-tropical-spectral-curve}. 
\end{example}

\subsection{Abstract tropical curves}\label{sec:abst_trop_curve}

An \emph{abstract tropical curve} is a triple
$(\tropmcurve,\ell,\bm g)$, where:
\begin{itemize}
\item $\tropmcurve$ is a finite connected graph.
\item $\ell:\edges(\tropmcurve)\to\Rpos$ is a \emph{length} function.
\item $\bm g=(g_v)_{v\in\vertices(\tropmcurve)}, g_v \in \Z_{\geq 0}$, is a genus assigned to each vertex.
\end{itemize}

Set
\[
g^{\mathrm{comp}}
:=
\sum_{v\in\vertices(\tropmcurve)}g_v,
\qquad
g^\trop
:=
\operatorname{rank}H_1(\tropmcurve,\Z),
\qquad
g
:=
g^{\mathrm{comp}}+g^\trop.
\]
We call $g$ the \emph{genus} of the abstract tropical curve.

For a line segment $I=[a,b]\subset\R^2$ parallel to a primitive
vector $u\in\Z^2$, define its \emph{lattice length} $|I|_{\Z}$ by
$
b-a=|I|_{\Z}u.
$ We now associate an abstract tropical curve to
\eqref{eq:trop_spc_emb}. For each bounded edge $\bar e$ of
$\tropmcurve_{\mathrm{emb}}$, let $\bar e^*$ be its dual edge and set
\[
m_{\bar e}
:=
|\bar e^*|_{\Z}.
\]
The underlying graph $\tropmcurve$ is obtained from
$\tropmcurve_{\mathrm{emb}}$ by contracting all unbounded edges and
replacing each bounded edge $\bar e$ by $m_{\bar e}$ distinct edges
$
e_1,\dots,e_{m_{\bar e}}
$
with the same endpoints. Thus, there is a natural map
\[
\pi:
\tropmcurve
\rightarrow
\tropmcurve_{\mathrm{emb}}
\]
which is the identity on vertices and maps each edge
$e_i\in\pi^{-1}(\bar e)$ linearly onto $\bar e$. In particular,
$
\vertices(\tropmcurve)
=
\vertices(\tropmcurve_{\mathrm{emb}}).
$ For each edge $e_i$ lying over $\bar e$, define
$
\ell(e_i)
:=
|\bar e|_{\Z}.
$
For $v\in\vertices(\tropmcurve)$, let $v^*$ be the dual polygon and set
$
g_v
:=
\left|
(v^*)^\circ\cap\Z^2
\right|.
$
This defines the abstract tropical curve
$
(\tropmcurve,\ell,\bm g).
$

In our setting, $\tropmcurve$ comes with a natural planar embedding. For each bounded embedded edge
$\bar e$, draw the edges in $\pi^{-1}(\bar e)$ as parallel edges
in a neighborhood of $\bar e$. Their ordering will be identified in
\cref{sec:degenerate_curve} with the cyclic order of the corresponding
nodes in the special fiber of the family $(\mcurve(t))_{t \gg 1}$.

With respect to this planar embedding, $\tropmcurve$ has
$g^\trop+1$ faces. Let
$
f_0\in F(\tropmcurve)
$
denote the outer face and set
\[
F^\circ(\tropmcurve)
:=
F(\tropmcurve)\setminus\{f_0\}.
\]
We orient the boundary of $f_0$ counterclockwise and the boundaries of
all other faces clockwise, and define
$
B_f^{\mathrm{ov},\trop}
:=
\partial f.
$
The cycles
$
\bm B^{\mathrm{ov},\trop}
=
(
B_f^{\mathrm{ov},\trop}
)_{f\in F^\circ(\tropmcurve)}
$
form a basis of $H_1(\tropmcurve,\Z)$, and
$
\sum_{f\in F(\tropmcurve)}
B_f^{\mathrm{ov},\trop}
=
0.
$

\begin{example}
\label{ex:non-smooth-tropical-curve}

Consider the embedded tropical curve shown in
\cref{fig:embedded-abstract-dual-subdivision}(A). The bounded edge joining $(0,0)$ to $(3,0)$ has multiplicity two. Passing to the corresponding abstract tropical curve, we replace this
multiplicity-two edge by a bigon. The vertices all have genus $0$ and the lengths of the bounded edges are indicated in
\cref{fig:embedded-abstract-dual-subdivision}(B).

\begin{figure}
\centering


\begin{subfigure}[c]{0.36\textwidth}
\centering

\begin{tikzpicture}[
    scale=.32,
    line cap=round,
    line join=round,
    tropical/.style={
        draw=curveblue,
        line width=1.2pt
    },
    vertex/.style={
        circle,
        fill=black,
        inner sep=0pt,
        minimum size=5pt
    }
]

\draw[tropical] (0,.16) -- (3,.16);
\draw[tropical] (0,-.16) -- (3,-.16);

\draw[tropical] (3,0) -- (3,5);
\draw[tropical] (3,5) -- (-5,5);
\draw[tropical] (-5,5) -- (0,0);

\draw[tropical] (3,0) -- (7,-2);
\draw[tropical] (3,5) -- (7,9);
\draw[tropical] (-5,5) -- (-9,7);
\draw[tropical] (0,0) -- (-3,-3);

\node[vertex] at (0,0) {};
\node[vertex] at (3,0) {};
\node[vertex] at (3,5) {};
\node[vertex] at (-5,5) {};

\node[left] at (0,0) {$(0,0)$};
\node[right] at (3,0) {$(3,0)$};
\node[right] at (3,5) {$(3,5)$};
\node[below left] at (-5,5) {$(-5,5)$};

\end{tikzpicture}

\caption{}
\label{fig:embedded-tropical-curve}
\end{subfigure}%
\hfill%
\begin{subfigure}[c]{0.36\textwidth}
\centering

\begin{tikzpicture}[
    scale=.32,
    line cap=round,
    line join=round,
    tropical/.style={
        draw=curveblue,
        line width=1.2pt
    },
    vertex/.style={
        circle,
        fill=black,
        inner sep=0pt,
        minimum size=5pt
    }
]


\draw[tropical]
    (0,0)
    to[out=30,in=150]
    node[midway,above=4pt,text=black,font=\scriptsize]
    {$\ell(e_4)=3$}
    (3,0);

\draw[tropical]
    (0,0)
    to[out=-30,in=210]
    node[midway,below=4pt,text=black,font=\scriptsize]
    {$\ell(e_5)=3$}
    (3,0);


\draw[tropical]
    (3,0)
    -- node[midway,right=4pt,text=black,font=\scriptsize]
    {$\ell(e_1)=5$}
    (3,5);

\draw[tropical]
    (3,5)
    -- node[midway,above=4pt,text=black,font=\scriptsize]
    {$\ell(e_2)=8$}
    (-5,5);

\draw[tropical]
    (-5,5)
    -- node[midway,below left=2pt,text=black,font=\scriptsize]
    {$\ell(e_3)=5$}
    (0,0);


\node[vertex] at (0,0) {};
\node[vertex] at (3,0) {};
\node[vertex] at (3,5) {};
\node[vertex] at (-5,5) {};

\node[left] at (0,0) {$v_2$};
\node[right] at (3,0) {$v_1$};
\node[right] at (3,5) {$v_4$};
\node[left] at (-5,5) {$v_3$};

\end{tikzpicture}

\caption{}
\label{fig:abstract-tropical-curve}
\end{subfigure}%
\hfill%
\begin{subfigure}[c]{0.20\textwidth}
\centering

\begin{tikzpicture}[
    scale=.82,
    line cap=round,
    line join=round
]


\draw[black,line width=1pt]
    (0,-2)
    -- (1,0)
    -- (0,1)
    -- (-1,-1)
    -- cycle;

\draw[black,line width=1pt] (0,0) -- (0,-2);
\draw[black,line width=1pt] (0,0) -- (1,0);
\draw[black,line width=1pt] (0,0) -- (0,1);
\draw[black,line width=1pt] (0,0) -- (-1,-1);

\node[bvert] at (0,-2) {};
\node[bvert] at (1,0) {};
\node[bvert] at (0,1) {};
\node[bvert] at (-1,-1) {};
\node[bvert] at (0,-1) {};
\node[bvert] at (0,0) {};

\end{tikzpicture}

\caption{}
\label{fig:tropical-dual-subdivision}
\end{subfigure}

\caption{
An embedded tropical curve with an edge of multiplicity $2$~(A), the corresponding abstract tropical curve obtained by
replacing the multiplicity-two edge with a bigon and contracting unbounded edges~(B), and the
dual subdivision of the Newton polygon~(C).
}
\label{fig:embedded-abstract-dual-subdivision}
\end{figure}

\end{example}

\subsection{Tropical differentials}\label{sec:trop_diff}
Let $(\tropmcurve,\ell,\bm g)$ be a abstract tropical curve of genus $g$. Let $H(\tropmcurve)$ denote the set of half-edges of
$\tropmcurve$. For each $h\in H(\tropmcurve)$, let $v(h)$ be its
incident vertex, let $e(h)$ be its underlying unoriented edge, and let
$\bar h$ be the opposite half-edge. Thus, $
e(h)=e(\bar h).$ For each vertex $v\in\vertices(\tropmcurve)$, set
\[
H_v
:=
\{
h\in H(\tropmcurve)
\mid
v(h)=v
\}.
\]

A \emph{meromorphic tropical $1$-form} on the abstract tropical curve
$(\tropmcurve,\ell, \bm g)$ is an antisymmetric function
\[
\omega:H(\tropmcurve)\rightarrow\Z,
\qquad
\omega(\bar h)=-\omega(h).
\]
The \emph{residue} of $\omega$ at a vertex
$v\in\vertices(\tropmcurve)$ is
\[
\res_v(\omega)
:=
\sum_{h\in H_v}\omega(h).
\]
A vertex $v$ is called a \emph{pole} of $\omega$ if
$
\res_v(\omega)\neq 0.
$
We say that $\omega$ is \emph{holomorphic} if
$
\res_v(\omega)=0
$
for every $v\in\vertices(\tropmcurve)$.

Let $I$ be an oriented edge segment contained in the edge $e(h)$ and
oriented in the direction of the half-edge $h$. We define the
\emph{tropical integral}
\[
\int_I\omega
:=
\omega(h)\ell(I),
\]
where $\ell(I)$ denotes the length of $I$ in the metric induced by
$\ell$. The tropical integral extends to $1$-chains by linearity.

\subsection{Tropical Abel map and Jacobian}\label{sec:tropical_jacobian}
Let $(\tropmcurve,\ell,\bm g)$ be a abstract tropical curve of genus $g$. Fix a basis $\bm C^\trop = (C_1^\trop,\dots,C_{g^\trop})$ of $H_1(\tropmcurve,\Z)$. For any $1 \leq j \leq g^\trop$, let $\omega_j^\trop$ to be the holomorphic tropical $1$-form
such that, for every $h\in H(\tropmcurve)$,
$\omega_j^{\trop}(h)$ is the coefficient of the edge $e(h)$, oriented in the direction of $h$, in the cycle $C_j^\trop$. In other words,
$\omega_j^{\trop}$ is the unit flow around $C_j^\trop$.


The \emph{tropical period matrix} is the positive-definite $g^\trop\times g^\trop$ matrix
\[
\Pi^{\trop}
:=
\int_{\bm C^{\trop}}\bm\omega^{\trop}.
\]

The \emph{tropical Jacobian} is the real
$g^\trop$-dimensional torus
\[
\jac(\tropmcurve)
:=
\R^{g^\trop}/\Pi^{\trop}\Z^{g^\trop}.
\]
For $\vecptjac^\trop\in\R^{g^\trop}$, we denote its image in $\jac(\tropmcurve)$ by
\(
[\vecptjac^\trop].
\)

The \emph{tropical Abel map} 
\[
\mu^{\trop}_\baseptT : \tropmcurve \rightarrow \jac(\tropmcurve)
\]
with basepoint $\baseptT \in \tropmcurve$ is defined by
\[
\mu_\baseptT^{\trop}(\ptSigT) := \int_\baseptT^{\ptSigT} \bm \omega^{\trop} \ \ \text{mod }  \Pi^{\trop} \Z^{g^\trop}.
\]
The tropical Abel map extends linearly to divisors.

A natural choice of basis is $\bm C^\trop = \bm B^{\mathrm{ov},\trop}$, in which case we denote the corresponding objects by an ov in the superscript. However, it will be more convenient to work with a different basis below.

\begin{example}\label{eg:tropical_period_matrix}

Consider the abstract tropical curve in
\cref{fig:embedded-abstract-dual-subdivision}(B).
It has two bounded faces. Let
$
\bm B^{\mathrm{ov},\trop}
=
(
B_1^{\mathrm{ov},\trop},
B_2^{\mathrm{ov},\trop}
),
$
where $B_1^{\mathrm{ov},\trop}$ is the clockwise boundary of the bigon
and $B_2^{\mathrm{ov},\trop}$ is the clockwise boundary of the other
bounded face. Then,
\[
\int_{B_1^{\mathrm{ov},\trop}}
\omega_1^{\mathrm{ov},\trop}
=
3+3=6,
\qquad
\int_{B_2^{\mathrm{ov},\trop}}
\omega_2^{\mathrm{ov},\trop}
=
3+5+8+5=21,
\]
whereas
\[
\int_{B_1^{\mathrm{ov},\trop}}
\omega_2^{\mathrm{ov},\trop}
=
\int_{B_2^{\mathrm{ov},\trop}}
\omega_1^{\mathrm{ov},\trop}
=
-3.
\]
Therefore, the tropical period matrix in the oval-adapted basis is
\[
\Pi^{\mathrm{ov},\trop}
=
\int_{\bm B^{\mathrm{ov},\trop}}
\bm\omega^{\mathrm{ov},\trop}
=
\begin{pmatrix}
6 & -3\\
-3 & 21
\end{pmatrix}.
\]

\end{example}

\section{Degeneration of the spectral curve}
\label{sec:plumbing}

Let $\mathcal M_{g,n}$ denote the moduli space of smooth genus-$g$ curves with $n$ marked points, i.e., pairs $(\mcurve,\bm \nu = (\nu_1,\dots,\nu_n))$ where $\mcurve$ is a smooth curve of genus $g$ and $\nu_1,\dots,\nu_n$ are distinct points of $\mcurve$. For brevity, when $g$ and $n$ are understood from context, we refer to $(\mcurve,\bm \nu)$ as a \emph{pointed curve}.

Its Deligne--Mumford compactification $\overline{\mathcal M}_{g,n}$ is obtained by allowing
stable curves. A \emph{stable pointed curve} is a connected compact curve $\mcurve$
whose only singularities are nodes, together with $n$ distinct marked
smooth points
$
\bm \nu,
$
satisfying a stability condition on each irreducible component.
More precisely, let $\mcurve_v$ be a component of the normalization,
let $g_v$ be its genus, and let $n_v$ be the number of special points
on $\mcurve_v$, consisting of the preimages of the nodes and the marked
points. The stability condition is
\[
2g_v-2+n_v>0
\]
for every component $\mcurve_v$. In other words, every genus-$0$ component has at least
three special points, and every genus-$1$ component has at least one special point.

The relevance of the Deligne--Mumford compactification for us is that
the family of pointed spectral curves
$
(\mcurve(t),\bm\nu(t))
$
has a limit in $\overline{\mathcal M}_{g,n}$. We will show that this
limit is a stable pointed curve
$
(\mcurve(\infty),\bm\nu(\infty)),
$
whose irreducible components are naturally indexed by the vertices of
the tropical spectral curve and whose nodes correspond to its bounded
edges. We then describe $\mcurve(t)$ near this boundary point of
$\overline{\mathcal M}_{g,n}$ using plumbing coordinates. This
description will allow us to study the asymptotics of the classical
objects on $\mcurve(t)$ that enter Fock's formula using results of \cite{HuNorton}.

\subsection{Toric degeneration}
\label{sec:degenerate_curve}

Let $[\wt](t)$ be a $\Puispos$-valued edge weight modulo gauge
equivalence satisfying a genericity condition that we will specify
later, and let
\[
(\spectralcurve(t))_{t\gg1}\subset\toricsurface
\]
be the corresponding family of spectral curves with embedded tropical
spectral curve $\tropmcurve_{\mathrm{emb}}$. Then
$\tropmcurve_{\mathrm{emb}}$ determines a degeneration of
$\toricsurface$ inside which the family $\spectralcurve(t)$ itself
degenerates to a nodal curve $\degeneratecurve$. In this section, we
describe this degeneration; see for example
\cite[Section~6.6]{MaclaganSturmfels} for details.

By replacing $t$ with $t^{\frac1M}$ for some large $M\in\Z$, we can
assume that $\hei_\pt\in\Z$ for all $\pt\in\NZo$. The tropical curve
$\tropmcurve_{\mathrm{emb}}$ determines a \emph{toric degeneration}
\[
p:\toricsurfacedegeneration\ra\C,
\]
where
\[
\toricsurfacedegeneration
:=
\overline{
\left\{
([t^{\hei_\pt}\ptC^\pt],t^{-1})_{\pt\in\NZ}
\mid
\ptC\in(\C^\times)^2
\right\}}
\subset
\C\mathbb P^{|\NZ|-1}\times\C
\]
and $p$ is projection to the second factor $\C$. Let
\[
\toricsurfacedegenerationfibert:=p^{-1}(t^{-1})
\]
denote the fibers. When $t\neq\infty$,
$\toricsurfacedegenerationfibert$ is isomorphic to $\toricsurface$.
When $t=\infty$ (i.e., when $t^{-1}=0$),
\[
\toricsurfacedegenerationfiberzero
=
\bigcup_{v\in\vertices(\tropmcurve)}
\toricsurfacesmall_{v^*},
\]
where the toric surfaces $\toricsurfacesmall_{v^*}$ intersect along
their lines at infinity according to the dual regular subdivision.

Inside $\C\mathbb P^{|\NZ|-1}$, define the hypersurface
\[
H(t)
:=
\left\{
[\ptC_\pt]_{\pt\in\NZ}
\middle|
\sum_{\pt\in\NZ}
t^{-\hei_\pt}c_\pt(t)\ptC_\pt=0
\right\}.
\]
For $t\neq\infty$, the spectral curve is the intersection
\[
\spectralcurve(t)
=
\toricsurfacedegenerationfibert\cap H(t).
\]
When $t=\infty$, intersecting with the hypersurface
\[
H(\infty)
:=
\left\{
[\ptC_\pt]_{\pt\in\NZ}
\middle|
\sum_{\pt\in\NZ}
\lc(c_\pt(t))\ptC_\pt=0
\right\}
\]
gives the special fiber
\[
\degeneratecurve
:=
\toricsurfacedegenerationfiberzero\cap H(\infty).
\]
The curve
\[
\degeneratecurve
=
\bigcup_{v\in\vertices(\tropmcurve)}
\spectralcurve_v,
\qquad
\spectralcurve_v
:=
\toricsurfacesmall_{v^*}\cap\degeneratecurve.
\]
Suppose $\ptR\in\R^2$ are the coordinates of $v$. Then,
$\spectralcurve_v$ is a Harnack curve defined inside
$\toricsurfacesmall_{v^*}$, cut out by the \emph{initial form}
\[
\init_v(P)(\ptC)
:=
\lim_{t\to\infty}
t^{-P^\trop(\ptR)}
P(t^\ptR\ptC;t).
\]

\begin{assumption}
\label{ass:generic-leading-coefficients}
We assume that $[\wt](t)$ is generic in the sense that each
$\spectralcurve_v$ is smooth and transverse to the toric boundary. This is a Zariski-open condition on the leading coefficients of
$[\wt](t)$.
\end{assumption}

Let $\bar e$ be a bounded edge of
$\tropmcurve_{\mathrm{emb}}$, with endpoints $u$ and $v$, and let
$\bar e^*$ be its dual edge. Recall that
\[
\pi^{-1}(\bar e)
=
\{e_1,\dots,e_{m_{\bar e}}\}.
\]
Without loss of generality, after a monomial change of coordinates,
assume that
\[
\bar e=[(-|\bar e|_{\Z},0),(0,0)],
\qquad
\bar e^*=[(0,0),(0,m_{\bar e})].
\]
The edge $\bar e$ defines an affine chart of
$\toricsurfacedegeneration$ of the form
\[
\mathcal U_{\bar e}
=
\left\{
(z_u,z_v,w,t^{-1})
\in
\C^2\times\C^\times\times\C
\;\middle|\;
z_uz_v-t^{-|\bar e|_{\Z}}=0
\right\},
\]
where
\[
z_u=z,
\qquad
z_v=\frac{t^{-|\bar e|_{\Z}}}{z},
\]
and
\begin{equation}
\mathcal U_{\bar e}
\cap
\left(
\toricsurfacesmall_{u^*}
\cap
\toricsurfacesmall_{v^*}
\right)
=
\left\{
(z_u,z_v,w,t^{-1})\in\mathcal U_{\bar e}
\;\middle|\;
z_u=z_v=t^{-1}=0
\right\}.
\label{eq:line_of_intersection}
\end{equation}

In these coordinates, the equation of $\spectralcurve(t)$ is of the
form
\[
F(z_u,z_v,w,t^{-1})
=
f_{\bar e}(w)+R(z_u,z_v,w,t^{-1}),
\]
where
\[
f_{\bar e}(w)
=
\sum_{k=0}^{m_{\bar e}}
\lc(c_{(0,k)}(t))w^k
\]
is a polynomial in $w$ of degree $m_{\bar e}$, and
$R(z_u,z_v,w,t^{-1})$ consists of terms that vanish on
\eqref{eq:line_of_intersection}. By~Assumption~\ref{ass:generic-leading-coefficients},
$f_{\bar e}(w)$ has $m_{\bar e}$ distinct roots
$
w_1,\dots,w_{m_{\bar e}}\in\C^\times.
$
Let $n_i :=(0,0,w_i,0)$. Then,
\[
\spectralcurve_u\cap\spectralcurve_v
=
\{
n_i
\mid
i=1,\dots,m_{\bar e}
\}.
\]
Since
\[
\frac{\partial F}{\partial w}(0,0,w_i,0)
=
f_{\bar e}'(w_i)
\neq0,
\]
the implicit function theorem allows us, in a neighborhood of each
$n_i$, to eliminate $w$, writing $w$ as a holomorphic function of
$z_u,z_v,t^{-1}$. Intersecting with $\mathcal U_{\bar e}$ leaves
\[
z_uz_v
=
t^{-|\bar e|_{\Z}},
\]
which is the standard local model of a nodal degeneration. Thus, the points $n_1,\dots,n_{m_{\bar e}}$ at which $\spectralcurve_u$ and $\spectralcurve_v$ meet are nodes.

We identify the nodes $n_1,\dots,n_{m_{\bar e}}$ with the edges $e_1,\dots,e_{m_{\bar e}}$ lying over $\bar e$ so that the cyclic order of the nodes along the real locus $\mcurve_u(\R)$ agrees with that of the edges around $u$.

We now regard this as a degeneration of pointed curves. For
\(t\gg 1\), let
\(
\bm\nu(t)=(\nu(t)(\alpha))_{\alpha\in\zzG}
\)
be the angle map associated with \([\wt](t)\). Thus,
$
(\spectralcurve(t),\bm\nu(t))
$
is a pointed curve, whose marked points are precisely the points at
infinity of \(\spectralcurve(t)\). Under the toric degeneration, we obtain a pointed nodal curve
$
(\degeneratecurve,\bm\nu(\infty))$, which we claim is stable. For a component
\(\spectralcurve_v\), the number of
special points 
$
n_v
=
\left|\partial v^*\cap\Z^2\right|
$
and the genus
$
g_v
=
\left|(v^*)^\circ\cap\Z^2\right|.
$ By Pick's theorem,
\[
2g_v-2+n_v
=
2\operatorname{Area}(v^*)
>
0.
\]
Thus, every component satisfies the stability condition and
$
(\degeneratecurve,\bm\nu(\infty))
$
is a stable pointed curve.

\subsection{Plumbing}

Let
$
(\degeneratecurve,\bm\nu(\infty))
$
be a stable pointed curve and let $(\tropmcurve,\ell,\bm g)$ be the abstract tropical curve. Plumbing provides a local chart around
$
(\degeneratecurve,\bm\nu(\infty))
\in
\overline{\mathcal M}_{g,n}
$. There are two types of local parameters.

The first type vary the components of the normalization while
preserving the nodes. For each $v\in\vertices(\tropmcurve)$, let
$
(\mcurve_v,\bm\nu_v)
\in
\mathcal M_{g_v,n_v}
$
denote the corresponding component, where $\bm\nu_v$ is the
tuple of special points on $\mcurve_v$ which consist of the preimages of the nodes together with the original marked
points lying on $\mcurve_v$. 
Let
$
U_v\subset\mathcal M_{g_v,n_v}
$
be a sufficiently small neighborhood of
$(\mcurve_v,\bm\nu_v)$, and set
$
U
:=
\prod_{v\in\vertices(\tropmcurve)}U_v.
$
We write
$
\bm u=(u_v)_{v\in\vertices(\tropmcurve)}\in U
$
for these parameters chosen so that
$u_v=0$ corresponds to $(\mcurve_v,\bm\nu_v)$. For $\bm u\in U$, denote the corresponding component pointed curves by
$
(\mcurve_v(\bm u),\bm\nu_v(\bm u)).
$
For each half-edge $h\in H_v$, we denote by
$
n_h(\bm u)
$
the marked point corresponding to the preimage of the node indexed by
$h$. 

The second type of parameters smooth the nodes. For each edge
$e\in\edges(\tropmcurve)$, let
$
q_e\in\C
$
be the corresponding \emph{plumbing parameter}, and write
$
\bm q=(q_e)_{e\in\edges(\tropmcurve)}.
$

We now recall the plumbing construction explicitly. After shrinking
$U$, for each half-edge $h\in H(\tropmcurve)$ we may choose a local
holomorphic coordinate
$
z_h(\bm u)
$
centered at $n_h(\bm u)$ which varies holomorphically with $\bm u$; see
\cite[Section~8.1]{HubbardKoch}. Choose $\rho>0$ sufficiently small so
that, for every $\bm u\in U$ and every
$v\in\vertices(\tropmcurve)$, the coordinate disks
\[
\{
|z_h(\bm u)|<\rho
\},
\qquad
h\in H_v,
\]
are pairwise disjoint and contain no other points of
$\bm\nu_v(\bm u)$.

Suppose first that $q_e\neq0$ for every
$e\in\edges(\tropmcurve)$. For each half-edge
$h\in H(\tropmcurve)$, remove the closed disk
\begin{equation}
\label{eq:general_plumbing_disk}
D_h(\bm u,\bm q)
:=
\left\{
|z_h(\bm u)|
\leq
\frac{|q_{e(h)}|}{\rho}
\right\}
\subset
\mcurve_{v(h)}(\bm u)
\end{equation}
and consider the \emph{collar}
\[
K_h(\bm u,\bm q)
:=
\left\{
\frac{|q_{e(h)}|}{\rho}
<
|z_h(\bm u)|
<
\rho
\right\}.
\]
For each edge $e=\{h,\bar h\}$, glue the collars
$K_h(\bm u,\bm q)$ and $K_{\bar h}(\bm u,\bm q)$ by
\begin{equation}
\label{eq:general_plumbing_map}
z_h(\bm u)
\mapsto
z_{\bar h}(\bm u)
=
\frac{q_e}{z_h(\bm u)}.
\end{equation}
We denote the resulting pointed curve by
$
(\mcurve(\bm u,\bm q),\bm\nu(\bm u,\bm q)),
$
where 
\[
\mcurve(\bm u,\bm q)
:=
\left(
\bigsqcup_{v\in\vertices(\tropmcurve)}
\left(
\mcurve_v(\bm u)
\setminus
\bigsqcup_{h\in H_v}D_h(\bm u,\bm q)
\right)
\right)
\bigg/
\sim,
\]
where $\sim$ is the equivalence relation generated by the gluing
maps~\eqref{eq:general_plumbing_map}, and the points in $\bm\nu(\bm u,\bm q)$ are the original marked points
among the tuples $\bm\nu_v(\bm u)$; the points $n_h(\bm u)$ are no
longer marked after the corresponding node is smoothed.

If $q_e=0$, the node corresponding to $e=\{h,\bar h\}$ is left
unsmoothed, so that
$
n_h(\bm u)\sim n_{\bar h}(\bm u).
$
In particular,
\[
\mcurve(\bm u,\bm 0)
=
\left(
\bigsqcup_{v\in\vertices(\tropmcurve)}
\mcurve_v(\bm u)
\right)
\bigg/
\left(
n_h(\bm u)\sim n_{\bar h}(\bm u)
\text{ for every }
e=\{h,\bar h\}
\right),
\]
and
\[
(\mcurve(\bm 0,\bm 0),
\bm\nu(\bm 0,\bm 0))
=
(\mcurve(\infty),\bm\nu(\infty)).
\]

For $\bm u$ and $\bm q$ sufficiently close to
$(\bm 0,\bm 0)$, the parameters $(\bm u,\bm q)$ give a local
chart for a neighborhood of
$(\mcurve(\infty),\bm\nu(\infty))$ in
$\overline{\mathcal M}_{g,n}$; see
\cite[Sections~8--10]{HubbardKoch}. Therefore, for \(t^{-1} \in [0,\epsilon)\) for some $\epsilon >0$, there is an
isomorphism of pointed curves
\[
(\spectralcurve(t),\bm\nu(t))
\cong 
(
\mcurve(\bm u(t),\bm q(t)),
\bm\nu(\bm u(t),\bm q(t))
)
\]
for some $(\bm u(t), \bm q(t)) \to (\bm 0,\bm 0)$ as $t \to \infty$. By~\cite[Definition~1.1.1 and the paragraph following it]{ACS}, the valuation of a smoothing parameter is independent of the local defining equation. Hence,
\[
\val(q_e(t))=-\ell(e)
\qquad
\text{for every }e\in\edges(\tropmcurve).
\]

Hereafter, since we restrict the plumbing construction to the one-parameter family $(\bm u(t),\bm q(t))$, we suppress the parameters $(\bm u(t),\bm q(t))$ from the notation and indicate the resulting dependence simply by $t$.

\subsection{Cores and degeneration-adapted paths}
\label{sec:tropicalization_of_paths}

Following~\cite[Proof of Theorem~3.6]{HubbardKoch}, for each vertex
$v\in\vertices(\tropmcurve)$, define
\[
\mcurve_v^{\mathrm{core}}(t)
:=
\mcurve_v(t)
\setminus
\bigsqcup_{h\in H_v}
\left\{
|z_h(t)|<\rho
\right\},
\qquad t^{-1}\in[0,\varepsilon).
\]
Let $\sigma(t)$ (resp. $\sigma_v$) denote complex conjugation on $\mcurve(t)$ (resp. $\mcurve_v$). Since the nodes lie on the real locus, we may choose the local coordinate $z_h(t)$ to be compatible with complex conjugation, i.e., so that $z_h(t) \circ \sigma(t) = \overline{z_h(t)}$. Complex conjugation makes 
\[
\bigsqcup_{t^{-1}\in[0,\varepsilon)}
\mcurve_v^{\mathrm{core}}(t)
\rightarrow
[0,\varepsilon)
\]
a $\Z/2\Z$-equivariant map, where the action on the base is trivial.

After shrinking $\varepsilon$ if necessary,
the equivariant form of Ehresmann's fibration theorem for manifolds
with corners~\cite[Lemma~2.34]{GoodwillieIgusaMalkiewichMerling} gives smooth diffeomorphisms
\begin{equation}\label{eq:trivialization}
\bm \Phi = (\Phi_v)_{v \in \vertices(\tropmcurve)},\qquad \Phi_v:
\mcurve_v^{\mathrm{core}}(\infty)\times[0,\varepsilon)
\rightarrow
\bigsqcup_{t^{-1}\in[0,\varepsilon)}
\mcurve_v^{\mathrm{core}}(t),
\qquad
\Phi_v(\cdot;0)=\operatorname{id},
\end{equation}
such that:
\begin{itemize}
\item For every $h\in H_v$, $\Phi_v$ is compatible with the boundary trivializations provided by $z_h(t)$, i.e., 
\[
z_h(t)\circ\Phi_{v}(\cdot;t^{-1})
=
z_h(\infty)
\qquad
\text{on }
\left\{
|z_h(\infty)|=\rho
\right\}.
\]
\item $\bm \Phi$ is $\Z/2\Z$-equivariant (compatible with complex conjugation), i.e., 
\[
\Phi_v(\sigma_v(\cdot);t^{-1})= \sigma(t) \circ \Phi_v(\cdot;t^{-1}).
\]
\end{itemize}
In other words, for $t\gg1$, $\bm \Phi$ provides a smooth trivialization of the family of cores $\bigsqcup_{t^{-1}\in[0,\varepsilon)} \mcurve_v^{\mathrm{core}}(t)$, allowing us to identify each $\mcurve_v^{\mathrm{core}}(t)$ with the fixed reference core $\mcurve_v^{\mathrm{core}}(\infty)$. Such a trivialization is, in general, not holomorphic. We only use the trivialization to pull back integrals of $1$-forms for which we only require smoothness.

A \emph{core path} is a family of paths
\[
\lambda^{\mathrm{core}}(t)
=
\Phi_{v}(\lambda^{\mathrm{core}};t^{-1})
\subset
\mcurve_v^{\mathrm{core}}(t),
\]
obtained by transporting a fixed path
\[
\lambda^{\mathrm{core}}
\subset
\mcurve_v^{\mathrm{core}}(\infty).
\]
Note that the boundary coordinate of an endpoint of a core path is independent of $t$.

A \emph{collar path} associated with an edge
$e=\{h,\bar h\}$ is a smooth family of paths
\[
\lambda^{\mathrm{collar}}(t)
\subset
K_h(t)\sim K_{\bar h}(t),
\qquad t\gg1.
\]
For each endpoint $\ptSig(t)$ of $\lambda^{\mathrm{collar}}(t)$, we require
that
\begin{equation}\label{eq:in_pt_trop}
\lim_{t\to\infty}
-\frac{\log |z_h(t)(\ptSig(t))|}{\log t}
\end{equation}
exists. We denote the point of $e$ at distance \eqref{eq:in_pt_trop} from $v(h)$ by $\ptSig^\trop$. Note that we put no restriction on the homotopy class of a collar path inside
the collar; in particular, it may wind around the collar.

A family of paths $(\gamma(t))_{t\gg1}$ is called
\emph{degeneration-adapted} if, for all sufficiently large $t$, it can
be written as a finite concatenation of core paths and collar paths.
Whenever a core path and a collar path are consecutive, their common
endpoint lies on the corresponding boundary circle of the core.
In particular, its boundary coordinate is independent of $t$.

The \emph{tropicalization} of a degeneration-adapted family
$(\gamma(t))_{t\gg1}$ is the oriented path
\[
\gamma^\trop\subset\tropmcurve
\]
obtained by contracting every core path to the corresponding vertex and
replacing every collar path by the oriented segment of the corresponding
edge joining the tropical limits of its endpoints. Conversely, every oriented path in $\tropmcurve$ admits a
degeneration-adapted lift.

Although \eqref{eq:in_pt_trop} may appear difficult to verify directly,
in our applications it is usually automatic. The endpoints of the paths
that we consider will be $\Puis$-valued points. After passing from
plumbing coordinates to the toric coordinates of
\cref{sec:degenerate_curve}, the limit in \eqref{eq:in_pt_trop} can
equivalently be computed as $\val(z_{v(h)}(\ptSig(t)))$. This determines the embedded tropical limit
$\pi(\ptSig^\trop)$, while the plumbing collar containing $\ptSig(t)$
determines the edge of the abstract tropical curve containing
$\ptSig^\trop$. Together, these two pieces of data determine
$\ptSig^\trop$.

\subsection{Degeneration-adapted symplectic basis}\label{sec:degen_adapted_basis}

We construct a symplectic basis of
$H_1(\mcurve(t),\Z)$ that is adapted to the degeneration
$
\mcurve(t)
\rightarrow
\mcurve(\infty).
$
For every vertex $v\in\vertices(\tropmcurve)$, let
\[
(\bm A_v^{\mathrm{ov}}
=
(
A_{v,1}^{\mathrm{ov}},
\dots,
A_{v,g_v}^{\mathrm{ov}}
),
\bm B_v^{\mathrm{ov}}
=
(
B_{v,1}^{\mathrm{ov}},
\dots,
B_{v,g_v}^{\mathrm{ov}}
))
\]
be a symplectic basis of
$H_1(\mcurve_v,\Z)$ as in \cref{sec:standard_divisors}, represented by cycles
contained in $\mcurve_v^{\mathrm{core}}$. For $t$ sufficiently large,
we regard these cycles as cycles on $\mcurve(t)$ via the trivialization $\bm \Phi$ and denote them by
$
A_{v,a}^{\mathrm{ov}}(t),
B_{v,a}^{\mathrm{ov}}(t),
1\leq a\leq g_v.
$
Since the trivialization is compatible with complex conjugation and since the ovals of a Harnack curve do not meet, $B_{v,a}^{\mathrm{ov}}(t)$ is just the corresponding oval of $\mcurve_v(t)$ and the $A$-cycles $A_{v,a}^{\mathrm{ov}}(t)$ are anti-invariant.

 Denote
\begin{equation}\label{eq:comp_cycles}
\bm A^{\mathrm{comp}}(t) = (
\bm A_{v}^{\mathrm{ov}}(t)
)_{
{
v\in\vertices(\tropmcurve)
}
}, \qquad \bm B^{\mathrm{comp}}(t) = (
\bm B_{v}^{\mathrm{ov}}(t)
)_{
{
v\in\vertices(\tropmcurve)
}
} 	
\end{equation}
Since 
\[
A_{v,a}^{\mathrm{ov}}\wedge B_{v,0}^{\mathrm{ov}}=-1,
\]
we can assume they intersect transversely exactly once.

To construct the remaining cycles, we fix a spanning tree $T\subset\tropmcurve$. For each edge
$
e\notin\edges(T),
$
let
$
B_e^\trop\subset\tropmcurve
$
be the clockwise-oriented cycle consisting of $e$ together with the unique path
in $T$ joining the two endpoints of $e$. The cycles
\[
\bm B^\trop := (B_e^\trop
)_{e\notin \edges(T)}
\]
form a basis of $H_1(\tropmcurve,\Z)$. For each $e\notin \edges(T)$, let $F_e$ be the set of faces in the
component of the $T^*\setminus\{e^*\}$ not containing the outer face $f_0$, where $T^*$ is the planar dual tree and $e^*$ is the edge dual to $e$. Then,
$
B_e^\trop
=
\sum_{f\in F_e} B_f^{\mathrm{ov},\trop}.
$
We therefore obtain a degeneration-adapted lift by
\begin{equation}\label{eq:ovals_to_deg}
\widetilde B_e^{\mathrm{trop}}(t)
:=
\sum_{f\in F_e} B_f^{\mathrm{ov}}(t),
\end{equation}
where $B_f^{\mathrm{ov}}(t) \subset \mcurve(t)$ is the oval tropicalizing to $f$. Since the cycles $B_f^{\mathrm{ov}}(t)$ and
$B_{v,a}^{\mathrm{ov}}(t)$ are ovals, they are pairwise disjoint. 

For $e=\{h,\bar h\}\notin T$, define the \emph{seam cycle}
\[
A_e^{\mathrm{seam}}(t)
:=
\{
|z_h(t)|
=
\sqrt{|q_e(t)|}
\} =\{|z_{\bar h}(t)|
=
\sqrt{|q_e(t)|} \}.
\]
in the plumbing collar corresponding to $e$. Orient $A_e^{\mathrm{seam}}(t)$ so that
\[
A_e^{\mathrm{seam}}(t)
\wedge
\widetilde B_e^{\mathrm{trop}}(t)
=
1.
\]
Let 
\[
\bm A^{\mathrm{seam}}(t) = (A_e^{\mathrm{seam}})_{e \notin \edges(T)}, \qquad \widetilde{\bm{B}}^{\mathrm{trop}}(t) = (\widetilde B_e^{\mathrm{trop}})_{e \notin \edges(T)}.
\]
Since $B_e^\trop$ contains no edge of
$\edges(\tropmcurve)\setminus\edges(T)$ other than $e$, we have
\[
\bm A^{\mathrm{seam}}(t) \wedge \widetilde{\bm{B}}^{\mathrm{trop}}(t) = I.
\]
Moreover, the seam cycles are mutually disjoint, and they are disjoint
from all component cycles.

It remains only to make the intersection of the tropical $B$-cycles with the
component $A$-cycles equal to $0$. To that end, set
\[
{\bm{B}}^{\mathrm{trop}}(t) = (B_e^{\mathrm{trop}})_{e \notin \edges(T)}, \qquad B_e^{\mathrm{trop}}(t)
:=
\widetilde B_e^{\mathrm{trop}}(t)
-
\sum_{\substack{v\in\vertices(\tropmcurve)\\1\leq a\leq g_v}}
(
A_{v,a}^{\mathrm{ov}}(t)
\wedge \widetilde B_e^{\mathrm{trop}}(t)
)
B_{v,a}^{\mathrm{ov}}(t).
\]
The cycles
\[
\bm A^{\mathrm{deg}}(t)
:=
(
\bm A^{\mathrm{comp}}(t),
\bm A^{\mathrm{seam}}(t)
),
\qquad
\bm B^{\mathrm{deg}}(t)
:=
(
\bm B^{\mathrm{comp}}(t),
\bm B^{\mathrm{trop}}(t)
)
\]
form a symplectic basis of
$
H_1(\mcurve(t),\Z).
$
We call
$
(
\bm A^{\mathrm{deg}}(t),
\bm B^{\mathrm{deg}}(t)
)
$
a \emph{degeneration-adapted symplectic basis}.

\section{Asymptotics under degeneration}\label{sec:asymptotics}

\subsection{Stable differentials}
\label{sec:stable_differentials}

A \emph{stable differential} on $\mcurve(\infty)$ is a collection
\[
\Omega_\bullet
=
(
\Omega_v
)_{v\in\vertices(\tropmcurve)}
\]
such that, for every vertex $v$, the differential $\Omega_v$ is
meromorphic on $\mcurve_v$, holomorphic away from the points
$
\{
n_h
\mid
h\in H_v
\},
$
and has at worst simple poles at these points. Moreover, for every edge
$e=\{h,\bar h\}$, the residues satisfy
\begin{equation}\label{eq:residue_matching}
\res_{n_h}\Omega_{v(h)}
+
\res_{n_{\bar h}}\Omega_{v(\bar h)}
=
0.
\end{equation}
For each $e\notin\edges(T)$, choose the half-edge
$h_e$ so that the seam cycle $A_e^{\mathrm{seam}}(t)$ is positively
oriented in the $z_{h_e}(t)$-coordinate. Define
\[
\bm a(\Omega_\bullet)
:=
\left(
\int_{A_{v,a}}\Omega_v
\right)_{
\substack{
v\in\vertices(\tropmcurve)\\
1\leq a\leq g_v
}},
\qquad
\bm r(\Omega_\bullet)
:=
\left(
\res_{n_{h_e}}\Omega_{v(h_e)}
\right)_{
e\notin\edges(T)
}.
\]

The \emph{tropical $1$-form associated to $\Omega_\bullet$} is defined by
\[
\Omega_\bullet^\trop(h)
:=
\res_{n_h}\Omega_{v(h)},
\qquad
h\in H(\tropmcurve).
\]
The condition~\eqref{eq:residue_matching} gives
$
\Omega_\bullet^\trop(\bar h)
=
-\Omega_\bullet^\trop(h),
$
while the residue theorem on each component gives
\[
\sum_{h\in H_v}
\Omega_\bullet^\trop(h)
=
0.
\]
Thus, $\Omega_\bullet^\trop$ is a tropical $1$-form on
$\tropmcurve$.

The following two results are stated in \cite{HuNorton} under the
assumption that the moduli $\bm u$ are independent of $t$. However,
\cite[Remark~2.4]{HuNorton} explains that these results extend to the
more general setting stated below.

\begin{theorem}[{\cite[Theorem~1.1]{HuNorton}}]
\label{thm:HuNorton:main1}
Let
$
\Omega_\bullet
=
(\Omega_v)_{v\in\vertices(\tropmcurve)}
$
be a stable differential on $\mcurve(\infty)$. Then, for $t\gg1$, there
exists a unique holomorphic differential
$\Omega(t)$ on $\mcurve(t)$ satisfying
\[
\int_{\bm A^{\mathrm{deg}}(t)}
\Omega(t)
=
\begin{pmatrix}
\bm a(\Omega_\bullet)\\
2\pi\imunit\,\bm r(\Omega_\bullet)
\end{pmatrix}.
\]
Moreover, for every vertex $v$ and every compact subset
$
K
\subset
\mcurve_v
\setminus
\{
n_h\mid h\in H_v
\},
$
we have
\[
\Phi_v(\cdot;t^{-1})^*\Omega(t)\to\Omega_v
\]
uniformly on $K$.
\end{theorem}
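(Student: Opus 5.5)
We prove \cref{thm:HuNorton:main1} by constructing $\Omega(t)$ as a small correction of a glued approximate differential, and then obtain uniqueness from the prescribed $A$-periods.

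\textbf{Uniqueness.} The $A$-periods of a holomorphic differential with respect to a symplectic basis determine it, since $(\bm A^{\mathrm{deg}}(t),\bm B^{\mathrm{deg}}(t))$ is a symplectic basis. Hence uniqueness holds, and existence is immediate: take $\Omega(t)=\sum_j c_j\,\omega_j(t)$, where $\omega_j(t)$ is the $A^{\mathrm{deg}}$-normalized basis and the $c_j$ are the prescribed periods.

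\textbf{Convergence: the ansatz.} The content of the statement is the convergence on cores. Let $\Omega^{\mathrm{app}}(t)$ be given on each core $\mcurve_v^{\mathrm{core}}(t)$ by the pushforward of $\Omega_v$ under $\Phi_v$. On a collar $K_h$, in the coordinate $z_h$, write
\[
\Omega_v=\Bigl(\tfrac{r_h}{z_h}+\text{holomorphic}\Bigr)dz_h .
\]
The plumbing map sends $r_h\,dz_h/z_h$ to $-r_h\,dz_{\bar h}/z_{\bar h}$. By the residue matching condition~\eqref{eq:residue_matching}, the polar parts agree on the overlap, so they glue. The holomorphic tails $\sum_{k\ge0} a_k z_h^k\,dz_h$ become, on the other side, terms of size $O(|q_e|^{k+1}/|z_{\bar h}|^{k+2})$. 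On the collar these are $O(|q_e|/\rho^2)$, and the same holds in the reverse direction. Cutting off with a partition of unity along the seam therefore produces a smooth closed form whose $(0,1)$-part is $O(\max_e|q_e(t)|)$ in $L^2$.

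\textbf{Convergence: the correction.} Solve $\bar\partial f=(\Omega^{\mathrm{app}})^{0,1}$ on $\mcurve(t)$, or equivalently take the harmonic (Hodge) projection onto holomorphic forms. This yields a holomorphic $\widehat\Omega(t)$ with $\|\widehat\Omega(t)-\Omega^{\mathrm{app}}(t)\|_{L^2}\to0$. Its $A$-periods are as follows:
\begin{itemize}
\item On the component cycles they converge to $\bm a(\Omega_\bullet)$, because these cycles lie in cores, where $L^2$-closeness upgrades to uniform closeness by elliptic (Cauchy) estimates for holomorphic forms.
\item On the seam cycles they are exactly $2\pi\imunit\,r_{h_e}$ up to the small error coming from the nonpolar part.
\end{itemize}
We then correct $\widehat\Omega(t)$ by a combination of normalized holomorphic differentials to match the periods exactly. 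This requires controlling the normalized basis: the $\omega_{v,a}(t)$ converge on cores to the component normalized differentials, while the seam-normalized $\omega_e(t)$ converge to the stable differentials with residues $\pm1$ at the nodes of $e$. The latter is a bootstrapping of the same construction applied to the basic stable differentials.

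\textbf{Main obstacle.} The hard step is the uniform estimate: the $L^2$ norm of a holomorphic form on a long thin collar does not control its periods across the collar. The $B$-periods genuinely diverge like $\log|q_e|$, so one must work only with $A$-data and core norms. I would use the Dirichlet principle or a Schiffer-variation comparison, as in Fay's Chapter~III on degenerating surfaces, and handle $t$-dependent $\bm u(t)$ by continuity of all core data in $\bm u$, as \cite[Remark~2.4]{HuNorton} indicates.
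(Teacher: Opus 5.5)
Your uniqueness and existence argument is fine, and your convergence argument (glued approximate form, Hodge projection, period correction through the normalized basis) is a genuinely different route from the paper's. The paper does not prove this statement. It imports Hu--Norton's Theorem~1.1, using their Remark~2.4 for the moving moduli $\bm u(t)$. Hu--Norton build $\Omega(t)$ directly from $\Omega_\bullet$ by solving a jump problem across the seams perturbatively. That construction also gives analytic dependence on the plumbing parameters and the seam estimates the paper needs later in \cref{lem:estimate_on_seams} and \cref{thm:tropical_integration}. Your soft $L^2$ route can be completed, and it yields exactly the continuity asserted. As written, however, it contains errors and stops at an obstacle that is not actually there.

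\emph{Errors in the construction.}

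(i) The tail estimate is false. We have $a_0\,dz_h=-a_0q_e\,dz_{\bar h}/z_{\bar h}^2$, whose coefficient is of size $\rho^2/|q_e|$ near the inner end $|z_{\bar h}|\approx|q_e|/\rho$. Each side's tail may only be used on its own half of the collar. The tails are residue-free, so they are exact on an annulus around the seam; write $\mathrm{tail}_v-\mathrm{tail}_{\bar v}=dF$ there. Then $\Omega_{\bar v}+d(\chi F)$, with polar parts matching by \eqref{eq:residue_matching}, is a closed form. Its $(0,1)$-part $F\,\bar\partial\chi$ is $O(|q_e|^{1/2})$ in $L^2$, which suffices. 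A partition of unity applied to the two $(1,0)$-forms themselves does not produce a closed form.

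(ii) $\Phi_v$ is not holomorphic and $\bm u(t)$ moves, so $(\Phi_v)_*\Omega_v$ is not holomorphic on the core. Your Cauchy-estimate step needs $\widehat\Omega-\Omega^{\mathrm{app}}$ to be holomorphic there, so it does not apply as stated. Instead, use the component differential $\Omega_v(\bm u(t))$ on $\mcurve_v(\bm u(t))$ with the same residues and $A$-periods. Pull back by $\Phi_v$ only at the end, using continuity in $\bm u$.

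(iii) The equation $\bar\partial f=(\Omega^{\mathrm{app}})^{0,1}$ is not solvable in general, since its class in $H^{0,1}$ need not vanish. Only the projection version is correct.

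(iv) The limit of the seam-normalized $\omega_e(t)$ has residues $\pm1$ at the nodes of every edge of $B_e^\trop$, not only at those of $e$. Otherwise the residue theorem fails on the components.

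\emph{The ``main obstacle'' is misidentified.} Nothing in this statement involves integrals across a collar; that is the content of \cref{thm:tropical_integration}. What you actually need is the following.

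(a) A $t$-independent projection bound. Write $\Omega^{\mathrm{app}}=h+\bar k+df$ with $h,k$ holomorphic. Then $\|\bar k\|^2+\|\bar\partial f\|^2=\|(\Omega^{\mathrm{app}})^{0,1}\|^2$, and $\|\partial f\|=\|\bar\partial f\|$ because $\int df\wedge d\bar f=0$. Hence $\widehat\Omega:=h$ satisfies $\|\widehat\Omega-\Omega^{\mathrm{app}}\|_{L^2}\le 2\|(\Omega^{\mathrm{app}})^{0,1}\|_{L^2}$ on every $\mcurve(t)$.

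(b) Control of the seam periods. $\Omega^{\mathrm{app}}$ has seam period exactly $2\pi\imunit\,r_{h_e}$; the non-polar part contributes nothing. The difference $\delta=\widehat\Omega-\Omega^{\mathrm{app}}$ is closed and holomorphic on an annulus $\{r_1<|z_{h_e}|<r_2\}$ in the collar avoiding the interpolation region. There its residue satisfies $|c_{-1}|^2\log(r_2/r_1)\lesssim\|\delta\|_{L^2}^2$, so the long collar helps rather than hurts.

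(c) Your bootstrap for the normalized basis. This is not circular, since the approximate $A$-period matrix is $2\pi\imunit(I+o(1))$.

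No Dirichlet principle or Schiffer variation is needed. The trade-off is that your route gives only the stated convergence, at rate $O(\max_e|q_e(t)|^{1/2})$ plus the rate of $\bm u(t)\to\bm 0$. The finer seam asymptotics that the paper later extracts from Hu--Norton would require separate work on your route.
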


\begin{theorem}[{\cite[Theorem~1.2]{HuNorton}}]
\label{thm:tropical_integration}
For $t\gg1$, let $\gamma(t)$ be a degeneration-adapted family of paths
with tropicalization $\gamma^\trop$. Let
$
\Omega_\bullet
=
(\Omega_v)_{v\in\vertices(\tropmcurve)}
$
be a stable differential on $\mcurve(\infty)$, with associated tropical
$1$-form $\Omega_\bullet^\trop$, and let $\Omega(t)$ be the family of
holomorphic differentials provided by
\cref{thm:HuNorton:main1}. Then, as $t\to\infty$,
\[
\int_{\gamma(t)}
\Omega(t)
=
-\left(
\int_{\gamma^\trop}
\Omega_\bullet^\trop
\right)
\log t
+
O(1).
\]
\end{theorem}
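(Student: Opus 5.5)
The plan is to decompose $\gamma(t)$ into its core paths and collar paths, and to estimate the contribution of each piece separately. Since there are finitely many pieces and the tropical integral is additive under concatenation, it suffices to prove two things. First, each core path contributes $O(1)$. Second, each collar path over an edge $e=\{h,\bar h\}$ contributes $-\Omega_\bullet^\trop(h)\,\ell(I)\log t+O(1)$, where $I$ is its tropicalized segment, oriented in the direction of $h$.

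\emph{Core paths.} A core path is $\Phi_v(\lambda^{\mathrm{core}};t^{-1})$ for a fixed path $\lambda^{\mathrm{core}}\subset\mcurve_v^{\mathrm{core}}(\infty)$. Hence
\[
\int_{\lambda^{\mathrm{core}}(t)}\Omega(t)=\int_{\lambda^{\mathrm{core}}}\Phi_v(\cdot;t^{-1})^*\Omega(t).
\]
The core is a compact subset of $\mcurve_v\setminus\{n_h\}$, after shrinking $\rho$ slightly if needed. So \cref{thm:HuNorton:main1} gives uniform convergence of the integrand to $\Omega_v$, and the integral converges to $\int_{\lambda^{\mathrm{core}}}\Omega_v$. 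This is $O(1)$, and it matches the fact that core paths contract to vertices.

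\emph{Collar paths.} On the annulus $K_h(t)=\{|q_e|/\rho<|z_h|<\rho\}$, expand
\[
\Omega(t)=\Bigl(\frac{r_h(t)}{z_h}+\sum_{n\neq-1}a_n(t)z_h^n\Bigr)dz_h .
\]
First I will show that $r_h(t)=\res_{n_h}\Omega_{v(h)}$ exactly. Here $2\pi\imunit\, r_h(t)$ is the period of $\Omega(t)$ over the seam circle of $e$. For $e\notin T$ this holds by the normalization in \cref{thm:HuNorton:main1}. For $e\in T$, cutting $\tropmcurve$ along $e$ expresses the seam circle of $e$ as a sum of boundary circles of cores. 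In homology, this is an integer combination of the seam cycles $A^{\mathrm{seam}}_{e'}(t)$, $e'\notin T$, crossing the same cut. The corresponding combination of the residues $\bm r(\Omega_\bullet)$ is exactly $\Omega_\bullet^\trop(h)$, by residue matching \eqref{eq:residue_matching} and the residue theorem on the components. Then I integrate $r_h\,dz_h/z_h$ along the collar path. The real part gives $r_h\bigl(\log|z_h(\text{end})|-\log|z_h(\text{start})|\bigr)$. By \eqref{eq:in_pt_trop} this equals $-r_h\,(s_{\mathrm{end}}-s_{\mathrm{start}})\log t+o(\log t)$, where $s$ denotes the distance from $v(h)$. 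For endpoints on core boundaries the error is exactly $O(1)$, and for $\Puis$-valued endpoints the valuation expansion gives $O(1)$. The imaginary part is $r_h$ times the change in argument. This is $O(1)$, since the smooth family has a fixed winding number.

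\emph{Main obstacle.} The hardest step is to bound the non-residue part $\sum_{n\neq-1}a_n z_h^n\,dz_h$ uniformly on the whole collar, independently of $t$. I would split it into $n\ge0$ and $n\le-2$. The $n\ge0$ part is controlled by Cauchy estimates on the outer circle $|z_h|=\rho$, where \cref{thm:HuNorton:main1} gives uniform bounds. The $n\le-2$ part, rewritten via $z_{\bar h}=q_e/z_h$, becomes the analogous positive part on the $\bar h$ side, and is controlled by bounds on $|z_{\bar h}|=\rho$. Each part then has a primitive that is bounded on the annulus, namely a convergent power series in $z_h$ or in $z_{\bar h}$ with geometric decay. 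Hence its integral along any collar path is $O(1)$. Summing the core and collar contributions gives
\[
\int_{\gamma(t)}\Omega(t)=-\Bigl(\int_{\gamma^\trop}\Omega_\bullet^\trop\Bigr)\log t+O(1).
\]
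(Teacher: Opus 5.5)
Your argument is correct in substance, but it does not follow the paper. The paper gives no proof of this statement: it imports it from Hu--Norton~\cite[Theorem~1.2]{HuNorton} and uses their Remark~2.4 to allow $\bm u$ to depend on $t$. You instead derive it from \cref{thm:HuNorton:main1} alone, using elementary complex analysis:
\begin{itemize}
\item Uniform convergence handles the cores.
\item A Stokes argument over the subsurface on one side of the cut determined by $T\setminus e$, together with the residue theorem on the components, shows that $r_h(t)=\Omega_\bullet^\trop(h)$ exactly for every edge, not only for $e\notin\edges(T)$.
\item A Laurent splitting with Cauchy estimates taken from both ends of the collar gives a primitive of $\Omega(t)-r_h\,dz_h/z_h$ that is bounded uniformly in $t$.
\end{itemize}
This route shows that Theorem~1.2 is a formal consequence of Theorem~1.1, so the $t$-dependence of $\bm u$ costs nothing beyond what is already in \cref{thm:HuNorton:main1}. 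It also handles collar paths ending in the interior of a collar, which the paper's definition allows. As a by-product it gives \cref{lem:estimate_on_seams}, since on $|z_h|=\sqrt{|q_e|}$ both Laurent tails are $O(1)$. The citation, in turn, brings Hu--Norton's finer analytic control, including the multi-parameter version that \cref{prop:trop_differential_third_kind} relies on.

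Two points need tightening.

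First, Cauchy estimates on the circle $|z_h|=\rho$ itself give only $|a_n|\le C\rho^{-n}$. That bound does not control $\sum_{n\ge0}a_nz_h^{n+1}/(n+1)$ up to $|z_h|=\rho$: the majorant grows like $-\log(1-|z_h|/\rho)$. Take the circle at a radius $\rho_1>\rho$ inside the core instead, where \cref{thm:HuNorton:main1} still gives a uniform bound. This yields genuine geometric decay on the closed collar, and the same applies on the $\bar h$ side.

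Second, your $O(1)$ error uses two conditions that the paper's definition of collar paths does not literally impose:
\begin{itemize}
\item $\log|z_h(\ptSig(t))|=-s\log t+O(1)$ at interior endpoints, which you correctly flag.
\item Bounded winding in the collar, which does not follow from smoothness in $t$. A path from $|z_h|=\rho$ to $\rho\,t^{-s}e^{\imunit t}$ whose argument increases monotonically picks up an extra $\imunit r_h t$.
\end{itemize}
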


\subsection{The period matrix and the Abel map}

The period-matrix asymptotics in this section are essentially due to Iwao~\cite[Theorem~4.3.1]{Iwao}, and his analysis also gives the corresponding Abel-map asymptotics in the plane-curve setting. We rederive these statements using plumbing since it allows us to remove certain technical assumptions that he uses. The same method will also be used for normalized differentials of the third kind where Iwao's results do not apply. We first apply \cref{thm:HuNorton:main1} to construct the basis of holomorphic differentials on $\mcurve(t)$ dual to $\bm A^{\mathrm{deg}}(t)$ and then apply \cref{thm:tropical_integration} to this basis. 

For every vertex $v\in\vertices(\tropmcurve)$, let $\bm \omega^{\mathrm{ov}}_v = (\omega_{v,1}^{\mathrm{ov}},\dots,\omega_{v,g_v}^{\mathrm{ov}})
$ be the normalized basis of holomorphic differentials on $\mcurve_v$, so that
\[
\int_{\bm A_{v}^{\mathrm{ov}}}
\bm \omega_{v}^{\mathrm{ov}}
=
2\pi\imunit I.
\]
For each vertex $v$ and each $1\leq a\leq g_v$, define a stable
differential $\Omega_{v,a;\bullet}
=
(
\Omega_{v,a;u}
)_{u\in\vertices(\tropmcurve)}
$ by
\[
\Omega_{v,a;u}
:=
\begin{cases}
\omega_{v,a}^{\mathrm{ov}},
&
u=v,
\\
0,
&
u\neq v.
\end{cases}
\]
Then,
\[
\bm a(\Omega_{v,a;\bullet})
=
2\pi\imunit \bm e_{v,a},
\qquad
\bm r(\Omega_{v,a;\bullet})
=
0,
\]
where $\bm e_{v,a} \in \C^{g^{\mathrm{comp}}}$ denotes the standard basis
vector corresponding to $(v,a)$. By
\cref{thm:HuNorton:main1}, there exists a unique holomorphic differential $\omega_{v,a}^{\mathrm{deg}}(t)$ on $\mcurve(t)$ satisfying
\[
\int_{\bm A^{\mathrm{deg}}(t)}
\omega_{v,a}^{\mathrm{deg}}(t)
=
2\pi\imunit
\begin{pmatrix}
\bm e_{v,a}\\
0
\end{pmatrix}.
\]

We next construct the differentials corresponding to the seam cycles.
Let
$
\omega_e^{\mathrm{deg},\trop}
$
be the tropical $1$-form given by unit flow along $B_e^\trop$. The forms
\[
\bm\omega^{\mathrm{deg},\trop}
:=
(
\omega_e^{\mathrm{deg},\trop}
)_{e \notin \edges(T)}
\]
form a basis of tropical $1$-forms on $\tropmcurve$.

For each
$
e \notin \edges(T)
$
and each
$
v\in\vertices(\tropmcurve),
$
let $\Omega_{e;v}^{\mathrm{deg}}$ be the unique $A$-normalized
differential of the third kind on $\mcurve_v$ satisfying
\[
\res_{n_h}\Omega_{e;v}^{\mathrm{deg}}
=
\omega_e^{\mathrm{deg},\trop}(h),
\qquad
h\in H_v.
\]
Then,
\[
\Omega_{e;\bullet}^{\mathrm{deg}}
:=
(
\Omega_{e;v}^{\mathrm{deg}}
)_{v\in\vertices(\tropmcurve)}
\]
is a stable differential on $\mcurve(\infty)$, whose associated
tropical $1$-form is
$
\omega_e^{\mathrm{deg},\trop}.
$
Moreover,
\[
\bm a(\Omega_{e;\bullet}^{\mathrm{deg}})
=
0,
\qquad
\bm r(\Omega_{e;\bullet}^{\mathrm{deg}})
=
\bm e_e,
\]
where $\bm e_e\in\C^{g^\trop}$ is the standard basis vector
corresponding to $e$. By
\cref{thm:HuNorton:main1}, there exists a unique holomorphic differential
$
\omega_e^{\mathrm{deg}}(t)
$
on $\mcurve(t)$ satisfying
\[
\int_{\bm A^{\mathrm{deg}}(t)}
\omega_e^{\mathrm{deg}}(t)
=
2\pi\imunit
\begin{pmatrix}
0\\
\bm e_e
\end{pmatrix}.
\]

Therefore,
\[
\bm\omega^{\mathrm{deg}}(t)
:=
\left(
(
\omega_{v,a}^{\mathrm{deg}}(t)
)_{
\substack{
v\in\vertices(\tropmcurve)\\
1\leq a\leq g_v
}},
(
\omega_e^{\mathrm{deg}}(t)
)_{e\notin\edges(T)}
\right)
\]
is the basis of holomorphic differentials on $\mcurve(t)$ normalized
with respect to $\bm A^{\mathrm{deg}}(t)$.

Define the \emph{degeneration-adapted tropical period matrix} by
\[
\Pi^{\mathrm{deg},\trop}
:=
\left(
\int_{B_{e'}^\trop}
\omega_e^{\mathrm{deg},\trop}
\right)_{
e,e'\notin\edges(T)
}.
\]

By \cref{thm:tropical_integration}, we obtain:

\begin{proposition}
\label{cor:periodtropical}
As \(t\to\infty\), the period matrix
\begin{equation}
\Pi^{\mathrm{deg}}(t)
=
-
\begin{pmatrix}
0&0\\
0&\Pi^{\mathrm{deg},\trop}
\end{pmatrix}
\log t
+
O(1).
\label{eq:period_asympotics_ov}
\end{equation}
\end{proposition}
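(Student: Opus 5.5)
The plan is to compute each block of $\Pi^{\mathrm{deg}}(t)=\int_{\bm B^{\mathrm{deg}}(t)}\bm\omega^{\mathrm{deg}}(t)$ separately, applying \cref{thm:tropical_integration} to the stable differentials $\Omega_{v,a;\bullet}$ and $\Omega^{\mathrm{deg}}_{e;\bullet}$ that produce the normalized basis $\bm\omega^{\mathrm{deg}}(t)$ via \cref{thm:HuNorton:main1}. To do this, I first need each cycle of $\bm B^{\mathrm{deg}}(t)$ to be degeneration-adapted, with a known tropicalization. The component cycles $B^{\mathrm{ov}}_{v,a}(t)$ are core paths, transported by $\bm\Phi$ from $\mcurve_v^{\mathrm{core}}(\infty)$, so they tropicalize to the constant path at $v$. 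For $B^{\mathrm{trop}}_e(t)$, I will represent $\widetilde B_e^{\mathrm{trop}}(t)$ by a cycle that runs through the cores of the vertices along $B_e^\trop$ and crosses each collar of an edge of $B_e^\trop$ from boundary circle to boundary circle. I expect this to be the main technical point. The ovals $B_f^{\mathrm{ov}}(t)$ tropicalize to the face boundaries $\partial f$, since the real locus passes through consecutive cores and collars along $\partial f$. Hence $\sum_{f\in F_e}B_f^{\mathrm{ov}}(t)$ is homologous to such a concatenation, and its tropicalization is $\sum_{f\in F_e}\partial f=B_e^\trop$. Subtracting the core cycles $B^{\mathrm{ov}}_{v,a}(t)$ keeps the cycle degeneration-adapted and does not change the tropicalization. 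Because integrals of holomorphic forms depend only on homology, I may replace each $B$-cycle by this representative.

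With these representatives, \cref{thm:tropical_integration} gives
\[
\int_{\gamma(t)}\Omega(t)=-\Big(\int_{\gamma^\trop}\Omega_\bullet^\trop\Big)\log t+O(1)
\]
for every $\gamma\in\bm B^{\mathrm{deg}}(t)$ and every $\Omega$ in the basis. The tropical $1$-form associated with $\Omega_{v,a;\bullet}$ is $0$, because $\omega^{\mathrm{ov}}_{v,a}$ is holomorphic and so all its residues at the nodes vanish. Therefore every period of $\omega^{\mathrm{deg}}_{v,a}(t)$ is $O(1)$, which gives the zero blocks in the first row and column of \eqref{eq:period_asympotics_ov}. For $\omega^{\mathrm{deg}}_e(t)$, the associated tropical form is $\omega_e^{\mathrm{deg},\trop}$. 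Its integral over a component cycle is zero because that cycle's tropicalization is constant, so those entries are also $O(1)$. Its integral over $B^\trop_{e'}(t)$ is
\[
-\Big(\int_{B_{e'}^\trop}\omega_e^{\mathrm{deg},\trop}\Big)\log t+O(1),
\]
which is exactly $-\Pi^{\mathrm{deg},\trop}_{e'e}\log t+O(1)$. This entry matches the displayed formula.

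Finally, I will check that the scaling conventions are consistent. \cref{thm:HuNorton:main1} normalizes $\omega^{\mathrm{deg}}_e(t)$ so that its seam periods equal $2\pi\imunit\,\bm r$, and $\bm r(\Omega^{\mathrm{deg}}_{e;\bullet})=\bm e_e$. \cref{thm:tropical_integration} is stated in these same conventions, so no extra factor appears. The index order $(e',e)$ against $(e,e')$ causes no difficulty, because $\Pi^{\mathrm{deg},\trop}$ is symmetric: both $\int_{B_{e'}^\trop}\omega_e^{\mathrm{deg},\trop}$ and $\int_{B_{e}^\trop}\omega_{e'}^{\mathrm{deg},\trop}$ equal the length-weighted intersection pairing of $B_e^\trop$ and $B_{e'}^\trop$.
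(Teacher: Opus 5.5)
Your proposal is correct and is essentially the paper's argument. The paper derives the proposition in one line ("By \cref{thm:tropical_integration}") by applying that theorem to the normalized basis $\bm\omega^{\mathrm{deg}}(t)$ built from the stable differentials $\Omega_{v,a;\bullet}$ and $\Omega^{\mathrm{deg}}_{e;\bullet}$, integrated over the degeneration-adapted $B$-cycles. Your write-up fills in the details the paper leaves implicit: the vanishing tropical forms of the component differentials, the tropicalizations of the ovals and of $B_e^{\mathrm{trop}}(t)$, and the normalization and symmetry checks.
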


Similarly, for the Abel map, we have:

\begin{proposition}
\label{cor:abelmap}
Let $(\gamma(t))_{t\gg1}$ be a degeneration-adapted family of paths
with tropicalization $\gamma^\trop$. Then, as $t\to\infty$,
\[
\int_{\gamma(t)}
\bm\omega^{\mathrm{deg}}(t)
=
-
\begin{pmatrix}
0\\
\displaystyle
\int_{\gamma^\trop}
\bm\omega^{\mathrm{deg},\trop}
\end{pmatrix}
\log t
+
O(1).
\]
In particular, suppose that $\gamma(t)$ joins the basepoint
$\basept(t)$ to a family of points $\ptSig(t)\in\mcurve(t)$, and let
$\ptSig^\trop\in\tropmcurve$ be the endpoint of $\gamma^\trop$.
Then, the lift of the Abel map determined by $\gamma(t)$ satisfies
\[
\mu_{\basept(t)}^{\mathrm{deg}}(\ptSig(t))
=
-
\begin{pmatrix}
0\\
\displaystyle
\int_{\baseptT}^{\ptSig^\trop}
\bm\omega^{\mathrm{deg},\trop}
\end{pmatrix}
\log t
+
O(1).
\]
\end{proposition}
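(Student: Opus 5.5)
The plan is to apply \cref{thm:tropical_integration} entry by entry to the $A^{\mathrm{deg}}$-normalized basis $\bm\omega^{\mathrm{deg}}(t)$ constructed above. Each basis element is the differential $\Omega(t)$ that \cref{thm:HuNorton:main1} attaches to an explicit stable differential $\Omega_\bullet$. For the component differentials $\omega_{v,a}^{\mathrm{deg}}(t)$, the stable differential is $\Omega_{v,a;\bullet}$. It is holomorphic on $\mcurve_v$ and zero on the other components, so all residues at nodes vanish and the associated tropical $1$-form is $\Omega_{v,a;\bullet}^\trop=0$. For the seam differentials $\omega_e^{\mathrm{deg}}(t)$, the stable differential is $\Omega_{e;\bullet}^{\mathrm{deg}}$. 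By construction its residues at the nodes are $\omega_e^{\mathrm{deg},\trop}(h)$, so $\left(\Omega_{e;\bullet}^{\mathrm{deg}}\right)^\trop=\omega_e^{\mathrm{deg},\trop}$.

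Next, given a degeneration-adapted family $\gamma(t)$ with tropicalization $\gamma^\trop$, I apply \cref{thm:tropical_integration} to each of these finitely many stable differentials.

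\begin{itemize}
\item For $\omega_{v,a}^{\mathrm{deg}}(t)$ the tropical form is zero, so $\int_{\gamma(t)}\omega_{v,a}^{\mathrm{deg}}(t)=O(1)$.
\item For $\omega_e^{\mathrm{deg}}(t)$ we obtain $\int_{\gamma(t)}\omega_e^{\mathrm{deg}}(t)=-\left(\int_{\gamma^\trop}\omega_e^{\mathrm{deg},\trop}\right)\log t+O(1)$.
\end{itemize}

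Stacking these gives the vector statement. The basis is finite, so taking the maximum of the finitely many implied constants gives a uniform $O(1)$.

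For the Abel map, the Abel map lift determined by $\gamma(t)$ is by definition $\int_{\gamma(t)}\bm\omega^{\mathrm{deg}}(t)$. It remains to identify $\int_{\gamma^\trop}\bm\omega^{\mathrm{deg},\trop}$ with $\int_{\baseptT}^{\ptSig^\trop}\bm\omega^{\mathrm{deg},\trop}$, understood as the integral along the path $\gamma^\trop$ from the tropical limit $\baseptT$ of the basepoint to $\ptSig^\trop$. This holds because $\gamma^\trop$ starts at the tropicalization of $\basept(t)$ and ends at $\ptSig^\trop$. The integral depends on the homotopy class of $\gamma^\trop$, which is consistent with the fact that on the classical side the lift depends on the homotopy class of $\gamma(t)$.

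The main point to check is that the hypotheses of \cref{thm:tropical_integration} really hold in this setting. First, the moduli $\bm u(t)$ vary with $t$; this is covered by the remark from \cite{HuNorton} quoted above. Second, $\gamma(t)$ must be degeneration-adapted, which is assumed in the statement. Third, the basepoint $\basept(t)$ has a well-defined tropical limit $\baseptT$, which I would arrange by choosing $\basept(t)$ in a core or collar in the sense of \eqref{eq:in_pt_trop}. Finally, there is a sign and orientation convention to match: the theorem's $-\log t$ together with $\val(q_e)=-\ell(e)$ must be compatible with the orientation of $h_e$ used to define $\bm r$. This is already built into \cref{thm:HuNorton:main1} and \cref{thm:tropical_integration}, so no further computation is needed.
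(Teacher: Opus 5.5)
Your proposal is correct and is the paper's argument: apply \cref{thm:tropical_integration} entrywise to the $\bm A^{\mathrm{deg}}(t)$-normalized basis $\bm\omega^{\mathrm{deg}}(t)$. The component differentials $\omega_{v,a}^{\mathrm{deg}}(t)$ have zero associated tropical $1$-form, and the seam differentials $\omega_e^{\mathrm{deg}}(t)$ have associated tropical $1$-form $\omega_e^{\mathrm{deg},\trop}$; your remark about the basepoint is unnecessary, since endpoints of degeneration-adapted paths have tropical limits by definition.
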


\subsection{Differentials of the third kind}
\label{sec:third_kind_asymptotics}

The goal of this section is to obtain asymptotics for integrals of
$A$-normalized differentials of the third kind of the form
$\omega_{\beta(t)-\alpha(t)}^{\mathrm{deg}}(t)$, where $\alpha(t)$ and
$\beta(t)$ are families of points on $\mcurve(t)$. The main idea is to
regard this differential as the limit of a holomorphic differential
after introducing an additional node identifying $\alpha(t)$ and
$\beta(t)$. We then apply the multivariable versions of
\cref{thm:HuNorton:main1} and \cref{thm:tropical_integration} proved
in~\cite{HuNorton}.

Related asymptotics for imaginary-normalized differentials of the third kind using Schottky-uniformized degenerations were obtained by Ichikawa~\cite[Theorem~4.7]{IchikawaHarmonicAmoebas}.

\begin{proposition}
\label{prop:trop_differential_third_kind}
Let
$
\alpha\in\mcurve_{v_\alpha}^{\mathrm{core}}(\infty)
$
and
$
\beta\in\mcurve_{v_\beta}^{\mathrm{core}}(\infty),
$
and let 
\[
\alpha(t) =\Phi_{v_\alpha}(\alpha;t^{-1}),\qquad \beta(t) =\Phi_{v_\beta}(\beta;t^{-1})
\]
be the corresponding points of
$\mcurve(t)$. Set
$
\alpha^\trop:=v_\alpha$ and $\beta^\trop:=v_\beta.
$
Let $\delta^\trop$ be the unique path in the spanning tree $T$ from
$\alpha^\trop$ to $\beta^\trop$, and define the tropical
third-kind differential $\omega_{\beta^\trop-\alpha^\trop}^{\mathrm{deg},\trop}$ as the tropical $1$-form given by unit flow along $\delta^\trop$.

Let $(\gamma(t))_{t\gg1}$ be a degeneration-adapted family of paths
such that $\gamma(t)$ is disjoint from $\alpha(t)$ and $\beta(t)$, and
let $\gamma^\trop$ be its tropicalization. Then, as $t\to\infty$,
\[
\int_{\gamma(t)}
\omega_{\beta(t)-\alpha(t)}^{\mathrm{deg}}(t)
=
-\left(
\int_{\gamma^\trop}
\omega_{\beta^\trop-\alpha^\trop}^{\mathrm{deg},\trop}
\right)
\log t
+
O(1).
\]
\end{proposition}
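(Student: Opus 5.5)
We follow the strategy announced before the statement: realize $\omega_{\beta(t)-\alpha(t)}^{\mathrm{deg}}(t)$ as a limit of holomorphic differentials on an auxiliary degenerating family with one extra node. Form the stable curve $\mcurve'(\infty)$ by gluing $\alpha\in\mcurve_{v_\alpha}$ to $\beta\in\mcurve_{v_\beta}$ in $\mcurve(\infty)$. Its dual graph $\tropmcurve'$ is $\tropmcurve$ with one extra edge $e_0$ from $v_\alpha$ to $v_\beta$; if $v_\alpha=v_\beta$ it is a loop. Plumb this extra node with an independent parameter $q_{e_0}=s$ while keeping the other parameters equal to $(\bm u(t),\bm q(t))$. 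We obtain a two-parameter family $\mcurve'(t,s)$ of genus $g+1$ with $\mcurve'(t,0)=\mcurve(t)/(\alpha(t)\sim\beta(t))$. Extend the spanning tree $T$ of $\tropmcurve$ to $\tropmcurve'$, so that $T$ is still a spanning tree and $e_0\notin T$. Add the seam cycle $A_{e_0}^{\mathrm{seam}}$ and a dual cycle $B_{e_0}$, built from $e_0$ together with $\delta^\trop$, to the degeneration-adapted basis.

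Next, consider the stable differential $\Omega_\bullet$ on $\mcurve'(\infty)$ with $\Omega_v$ the $A$-normalized third-kind differential on $\mcurve_v$. Its residues at the node points are those of the unit flow along the cycle $\delta^\trop - e_0$ in $\tropmcurve'$. In particular, it has residue $\pm1$ at $\alpha$ and $\beta$, and all component $A$-periods vanish. Its tropical $1$-form restricted to $\tropmcurve$ is exactly $\omega^{\mathrm{deg},\trop}_{\beta^\trop-\alpha^\trop}$, because $e_0$ is the only edge not in $\tropmcurve$.

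We then invoke the multivariable versions of \cref{thm:HuNorton:main1} and \cref{thm:tropical_integration} from \cite{HuNorton}. These give holomorphic differentials $\Omega(t,s)$ on $\mcurve'(t,s)$ with $A^{\mathrm{deg}}$-periods equal to $0$ on all old cycles and $2\pi\imunit$ on $A_{e_0}^{\mathrm{seam}}$. They also give the estimate $\int_{\gamma(t,s)}\Omega(t,s)=-\sum_e(\int_{\gamma^\trop}\Omega^\trop_\bullet)\,\val$-type terms $+\,O(1)$, with the $O(1)$ uniform in the plumbing parameters near $0$. We take $\gamma(t)$ avoiding the $e_0$-collar, which is possible because $\gamma(t)$ is disjoint from $\alpha(t),\beta(t)$ and we may shrink that collar. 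Then the leading term involves only edges of $\tropmcurve$ and is governed by the given lengths. Now let $s\to0$ at fixed $t$. The differentials $\Omega(t,s)$ converge on compacta away from $\alpha(t),\beta(t)$ to the unique differential on $\mcurve(t)$ with simple poles of residue $\pm1$ at $\beta(t),\alpha(t)$ and vanishing $A^{\mathrm{deg}}(t)$-periods, namely $\omega_{\beta(t)-\alpha(t)}^{\mathrm{deg}}(t)$. This is the standard fact that the plumbing limit of normalized holomorphic differentials is the normalized stable differential, which is \cref{thm:HuNorton:main1} applied in the $s$ variable. Passing to the limit in the integral estimate, and using uniformity of the $O(1)$ in $s$, gives the claim.

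The main obstacle is this interchange of limits. The Hu--Norton error term must be uniform as $s\to0$ with $t$ large but fixed, that is, uniform in a full neighborhood of the boundary point in the multiparameter chart including unsmoothed $e_0$. Their multivariable theorem is stated on such neighborhoods with bounds depending only on $\log|q_e|$ for edges the path traverses, so this should hold. We would check carefully that $\gamma$ never enters the $e_0$ collar, since otherwise a $\log|s|$ term would appear. If uniformity is problematic, an alternative is a direct argument: write $\omega^{\mathrm{deg}}_{\beta-\alpha}(t)$ as the plumbing of the stable differential on $\mcurve(\infty)$ with marked poles. Its correction is then a combination of $\omega^{\mathrm{deg}}(t)$ with $O(1)$ coefficients, obtained by comparing $A$-periods on seam cycles. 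The leading-order integral then follows from \cref{cor:abelmap}.
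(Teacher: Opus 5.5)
Your proposal is correct and follows essentially the same route as the paper. Like the paper, you:
\begin{itemize}
\item adjoin an extra node identifying $\alpha$ and $\beta$, with an independent plumbing parameter;
\item take the stable differential whose tropical form restricts to the unit flow along $\delta^\trop$, with residues $\pm1$ at the two new branches;
\item apply the multivariable Hu--Norton results;
\item identify the limit as the extra parameter tends to $0$ with $\omega^{\mathrm{deg}}_{\beta(t)-\alpha(t)}(t)$ by uniqueness;
\item keep $\gamma(t)$ out of the new collar, so that no logarithmic term in the extra parameter appears.
\end{itemize}
You flag as the delicate point that the $O(1)$ error must be uniform in the extra plumbing parameter; the paper relies on this only implicitly, in its step ``letting $q_*\to0$''.
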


\begin{proof}

Form a new nodal curve by identifying $\alpha$ and $\beta$, and let
$\widehat{\tropmcurve}$ be the corresponding abstract tropical curve obtained
from $\tropmcurve$ by adding an edge $e_*$ between
$\alpha^\trop$ and $\beta^\trop$.

Choose local holomorphic coordinates $z_\alpha(t)$ and $z_\beta(t)$ centered
at $\alpha$ and $\beta$, respectively, whose coordinate disks are
contained in the corresponding reference cores. Introduce an
independent plumbing parameter $q_*$ at the new node, with plumbing
equation
\[
z_\alpha(t) z_\beta(t)=q_*.
\]
Together with the original plumbing parameters $\bm q(t)$, this gives a
two-parameter family which we denote by
$
\widehat\mcurve(q_*,t).
$
For $q_*\neq0$, the additional node is smoothed, while at $q_*=0$ its
normalization is $\mcurve(t)$ with the two preimages of the node equal
to $\alpha(t)$ and $\beta(t)$.

After choosing the coordinate disks sufficiently small, the complement
of these disks is canonically identified with the corresponding subset
of $\mcurve(t)$:
\begin{equation}\label{eq:two-paramter-core}
\widehat\mcurve(q_*,t)
\setminus
\left(
\{|z_\alpha(t)|<\rho\}
\cup
\{|z_\beta(t)|<\rho\}
\right)
=
\mcurve(t)
\setminus
\left(
\{|z_\alpha(t)|<\rho\}
\cup
\{|z_\beta(t)|<\rho\}
\right).
\end{equation}

Let
$
\widehat\omega_*^\trop
$
be the tropical $1$-form on $\widehat\tropmcurve$ whose restriction to
$\tropmcurve$ is
$
\omega_{\beta^\trop-\alpha^\trop}^\trop
$
and whose value on $e_*$ is chosen so that the balancing condition
holds at $\alpha^\trop$ and $\beta^\trop$. Let
$
\widehat\Omega_{\bullet;*}
=
(\widehat\Omega_{v;*})_v
$
be the stable differential associated to
$\widehat\omega_*^\trop$. Thus, its residues at the old nodes are given
by
\[
\res_{n_h}\widehat\Omega_{v(h);*}
=
\widehat\omega_*^\trop(h),
\]
while at the two branches of the new node they are
\[
\res_{\beta}\widehat\Omega_{v_\beta;*}=1,
\qquad
\res_{\alpha}\widehat\Omega_{v_\alpha;*}=-1.
\]
On each component, $\widehat\Omega_{v;*}$ is normalized to have
vanishing periods along the component $A$-cycles.

By the multivariable version of \cref{thm:HuNorton:main1}, there is a
family of holomorphic differentials
$
\widehat\omega_*(q_*,t)
$
on $\widehat\mcurve(q_*,t)$ with the prescribed periods which converges to
$\widehat\Omega_{\bullet;*}$ on compact subsets away from the nodes.

Now fix $t$ and let $q_*\to0$. On the normalization $\mcurve(t)$, the
limiting differential has simple poles at $\alpha(t)$ and $\beta(t)$,
with residues $-1$ and $1$, respectively, and has vanishing periods
along $\bm A^{\mathrm{deg}}(t)$. Hence, by uniqueness,
\[
\lim_{q_*\to0}
\widehat\omega_*(q_*,t)
=
\omega_{\beta(t)-\alpha(t)}^{\mathrm{deg}}(t)
\]
uniformly on compact subsets of
$
\mcurve(t)\setminus\{\alpha(t),\beta(t)\}.
$

We now apply the multivariable version of
\cref{thm:tropical_integration}. Since $\gamma(t)$ is
degeneration-adapted and disjoint from $\alpha(t)$ and $\beta(t)$, the
coordinate disks may be chosen so that $\gamma(t)$ lies in the common
region~\eqref{eq:two-paramter-core}. In particular, $\gamma(t)$ does
not cross the new plumbing collar, so the asymptotic expansion
of
$
\int_{\gamma(t)}\widehat\omega_*(q_*,t)
$
has no logarithmic contribution from $q_*$. Letting $q_* \to 0$ therefore
gives
\[
\int_{\gamma(t)}
\omega_{\beta(t)-\alpha(t)}^{\mathrm{deg}}(t)
=
-
\left(
\int_{\gamma^\trop}
\widehat\omega_*^\trop
\right)
\log t
+
O(1).
\]
Finally, since $\gamma^\trop$ is contained in the original graph
$\tropmcurve$ and
$
\widehat\omega_*^\trop
$
restricts to
$
\omega_{\beta^\trop-\alpha^\trop}^\trop,
$
we have
\[
\int_{\gamma^\trop}
\widehat\omega_*^\trop
=
\int_{\gamma^\trop}
\omega_{\beta^\trop-\alpha^\trop}^\trop.
\]
This proves the proposition.
\end{proof}

\subsection{The vector of Riemann constants}
\label{sec:trop_riemann_constants}

In this section, we compute the asymptotics of the vector of Riemann
constants.

Recall that we fixed a spanning tree
$
T\subset\tropmcurve.
$
For each
$
e\notin\edges(T),
$
recall that the half-edge $h_e$ was chosen so that the seam cycle
$A_e^{\mathrm{seam}}(t)$ is positively oriented in the
$z_{h_e}(t)$-coordinate. With our orientation conventions,
$B_e^\trop$ traverses $e$ from $v(h_e)$ to $v(\bar h_e)$.

Let $s_e$ be the midpoint of $e$. We define the \emph{cut tropical curve}
$\tropmcurve_{\mathrm{cut}}$ by cutting every edge
$
e\notin\edges(T)
$
at $s_e$. Thus, $s_e$ is replaced by two points
$
s_e^-
$
and
$
s_e^+,
$
where $s_e^-$ belongs to the half-edge incident to $v(h_e)$ and
$s_e^+$ belongs to the half-edge incident to $v(\bar h_e)$. In
particular, the oriented cycle $B_e^\trop$ traverses the seam from the
$s_e^-$-side to the $s_e^+$-side. 

Since $T$ is a spanning tree,
$\tropmcurve_{\mathrm{cut}}$ is itself a tree. In particular, any two points of
$\tropmcurve_{\mathrm{cut}}$ are joined by a unique path. All tropical
integrals in this section are taken along these unique paths.

\begin{proposition}
\label{prop:riemann_constants_trop}
Let
$
\vecRC^{\mathrm{deg}}(t)
=
(\RC_j^{\mathrm{deg}}(t))_{j=1}^g
\in\C^g
$
denote the vector of Riemann constants of $\mcurve(t)$ with respect to
the symplectic basis
$
(
\bm A^{\mathrm{deg}}(t),
\bm B^{\mathrm{deg}}(t)
)
$
and basepoint $\basept(t)$. Then,
\[
\vecRC^{\mathrm{deg}}(t)
=
-
\begin{pmatrix}
0\\
\vecRC^{\mathrm{deg},\trop}
\end{pmatrix}
\log t
+
O(1),
\]
where
$
\vecRC^{\mathrm{deg},\trop}
=
(
\RC_e^{\mathrm{deg},\trop}
)_{e\notin\edges(T)}
\in
\R^{g^\trop}
$
is given by
\begin{equation}
\label{eq:RC_deg_trop}
\RC_e^{\mathrm{deg},\trop}
=
-\frac{1}{2}\Pi_{ee}^{\mathrm{deg},\trop}
+
\sum_{v\in\vertices(\tropmcurve)}
g_v
\int_{\baseptT}^{v}
\omega_e^{\mathrm{deg},\trop}
+
\sum_{\substack{
e'\neq e\\
e'\notin\edges(T)
}}
\int_{\baseptT}^{s_{e'}^+}
\omega_e^{\mathrm{deg},\trop}.
\end{equation}
\end{proposition}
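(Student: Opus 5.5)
We will use the classical formula for the vector of Riemann constants in terms of the normalized basis (Fay's convention, normalization $\int_{A_k}\omega_j = 2\pi\imunit\delta_{jk}$):
\[
\RC_j(t)
=
\pi\imunit-\frac12\Pi_{jj}(t)
+\frac{1}{2\pi\imunit}\sum_{k\neq j}\int_{A_k(t)}\Big(\omega_k(t)\int_{\basept(t)}^{\,\cdot}\omega_j(t)\Big),
\]
up to the sign conventions fixed in \cref{thm:Riemann}. Here the inner integral is taken along paths in the surface cut along the symplectic basis, i.e. a fundamental polygon. The proof then reduces to computing the asymptotics of each of the three terms with respect to $(\bm A^{\mathrm{deg}}(t),\bm B^{\mathrm{deg}}(t))$. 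The diagonal term is immediate from \cref{cor:periodtropical}. It contributes $\frac12\Pi^{\mathrm{deg},\trop}_{ee}\log t$ to the seam coordinates and $O(1)$ to the component coordinates, which matches the first term of \eqref{eq:RC_deg_trop} after the overall minus sign.

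For the double integrals, we split according to whether $A_k$ is a component cycle $A^{\mathrm{ov}}_{v,a}(t)$ or a seam cycle $A^{\mathrm{seam}}_{e'}(t)$. For a component cycle, $\omega_k=\omega^{\mathrm{deg}}_{v,a}(t)$ converges on the core of $\mcurve_v$ by \cref{thm:HuNorton:main1}, with $A$-period $2\pi\imunit$. The inner function $\int_{\basept}^{\cdot}\omega_j$ evaluated on $A_{v,a}(t)$ equals its value at one point of the core of $v$ plus a bounded oscillation. By \cref{cor:abelmap} applied to a degeneration-adapted path from $\basept$ to that core point, whose tropicalization is the path in the cut tree from $\baseptT$ to $v$, this value is $-\int_{\baseptT}^{v}\omega_j^{\mathrm{deg},\trop}\log t+O(1)$. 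Summing over $a=1,\dots,g_v$ produces the term $g_v\int_{\baseptT}^{v}\omega^{\mathrm{deg},\trop}_e$. Similarly, for a seam cycle $A^{\mathrm{seam}}_{e'}(t)$ with $e'\ne e$, the seam lies at $|z_{h}|=\sqrt{|q_{e'}|}$, so it tropicalizes to the midpoint $s_{e'}$. The cut along the $B$-cycles places the relevant side at $s^+_{e'}$, since the fundamental polygon is approached from the side reached by $B_{e'}$ after crossing the seam. On the seam, $\omega_{e'}^{\mathrm{deg}}(t)$ is approximately $dz/z$, so its integral is exactly $2\pi\imunit$. The inner integral is constant up to $O(1)$ along the seam, because $\omega_j$ restricted to the seam circle integrates to $2\pi\imunit\delta_{je'}$, which is bounded. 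Thus this contribution is $-\int_{\baseptT}^{s^+_{e'}}\omega^{\mathrm{deg},\trop}_e\log t+O(1)$. In the component coordinates $j=(v,a)$, every inner integral is $O(1)$ by \cref{cor:abelmap}, which gives the zero block.

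The main obstacle is bookkeeping rather than analysis, and it has two parts. The first is to check that the path from $\basept(t)$ to points on $A_k(t)$, taken inside the fundamental domain cut along the $B$-cycles, is degeneration-adapted and tropicalizes to the unique path in $\tropmcurve_{\mathrm{cut}}$. The second is to determine on which side, $s^\pm_{e'}$, the seam is seen. I would first fix a concrete representative of the fundamental polygon by cutting $\mcurve(t)$ along the ovals and the seams, and verify that crossing $B^{\mathrm{trop}}_{e'}$ does not happen along these paths. Because the component corrections in $B_e^{\mathrm{trop}}$ are ovals lying in cores, they only affect the computation by $O(1)$. One must also check that the oscillation of the inner integral along each $A$-cycle is uniformly $O(1)$; this follows from convergence on cores together with the explicit $dz/z$ form on collars. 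Collecting the terms and multiplying by the conventional overall sign gives the stated formula.
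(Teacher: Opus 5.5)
Your architecture is the paper's: Fay's formula in the degeneration-adapted basis, the diagonal term from \cref{cor:periodtropical}, and the off-diagonal double integrals split into component and seam cycles, evaluated via convergence on cores and the $dz/z$ behaviour on seams. The gap is in the step you call bookkeeping.

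Formula \eqref{eq:Fay_cRS} holds for a canonical system of loops based at $\basept(t)$. In it, $\int_{\basept(t)}^{\zeta}\omega_j$ is the branch fixed by the resulting $4g$-gon. Two branches differ by $B$-periods, which are of order $\log t$. So the choice of branch is exactly what produces the terms $\int_{\baseptT}^{v}\omega_e^{\mathrm{deg},\trop}$ and $\int_{\baseptT}^{s_{e'}^+}\omega_e^{\mathrm{deg},\trop}$; it is the content of the proposition, not a detail. Your setup does not fix this branch, for three reasons:
\begin{itemize}
\item the cycles $A_k^{\mathrm{deg}}(t)$ do not pass through $\basept(t)$;
\item the $B_e^{\mathrm{trop}}(t)$ are sums of ovals rather than simple loops;
\item cutting $\mcurve(t)$ along the compact ovals and the $A$-cycles does not in general give a polygon. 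For $g=g^\trop=2$ with both seams meeting the outer oval, it leaves an annulus.
\end{itemize}
So the formula you start from is not justified for your cycles, and ``the path inside the fundamental domain'' to a point of $A_k^{\mathrm{deg}}(t)$ is not yet defined.

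The missing step is \cref{lem:Fay_RC_deg}. Replace each $A_k^{\mathrm{deg}}(t)$ by $\lambda_k(t)\cdot A_k^{\mathrm{deg}}(t)\cdot\lambda_k(t)^{-1}$. Here $\lambda_k(t)$ is a degeneration-adapted lift of the unique path in $\tropmcurve_{\mathrm{cut}}$ from $\baseptT$ to $v$ (for $k=(v,a)$), or to $s_{e'}^+$ (for $k=e'$, approaching the seam from that side). Complete these loops to a canonical system at $\basept(t)$. On the polygon edge $(\widehat A_k^{\mathrm{deg}}(t))^+$, the two copies of $\lambda_k(t)$ are traversed in opposite directions. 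The inner integral differs between them by $\int_{A_k^{\mathrm{deg}}(t)}\omega_j^{\mathrm{deg}}(t)=0$ for $k\neq j$, so their contributions cancel exactly. This exact cancellation is needed: the tails cross collars, and their contributions can individually be of order $(\log t)^2$. After it, the branch on $(A_k^{\mathrm{deg}}(t))^+$ is by construction the integral along $\lambda_k(t)$ plus a short arc. That settles both your degeneration-adaptedness question and your $s^\pm$ question.

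Two smaller points.
\begin{itemize}
\item Bounded oscillation of the inner integral along a seam does not follow from the full seam period being bounded. It needs the pointwise form $\omega_j^{\mathrm{deg}}(t)=(c/z_{h_{e'}}+O(1))\,dz_{h_{e'}}$ of \cref{lem:estimate_on_seams}, which you do invoke at the end.
\item To move the uniform $O(1)$ error through the outer integral, you need $\int_{(A_k^{\mathrm{deg}}(t))^+}|\omega_k^{\mathrm{deg}}(t)|=O(1)$, not just $\int\omega_k^{\mathrm{deg}}(t)=2\pi\imunit$.
\end{itemize}
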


\begin{example}
\label{ex:riemann_constants_non_smooth}

Consider again the abstract tropical curve in
\cref{fig:embedded-abstract-dual-subdivision}(B), and choose the
spanning tree
\[
T=\{e_1,e_2,e_3\}.
\]
Thus, the edges not contained in $T$ are $e_4$ and $e_5$.
We orient the edges by
\[
e_1:v_4\to v_1,\qquad
e_2:v_3\to v_4,\qquad
e_3:v_2\to v_3,
\qquad
e_4,e_5:v_1\to v_2.
\]
With our clockwise orientation convention for the bounded faces,
\[
B_1^{\mathrm{ov},\trop}
=
e_5-e_4,
\qquad
B_2^{\mathrm{ov},\trop}
=
e_1+e_2+e_3+e_4.
\]
The degeneration-adapted tropical cycles are given by
\[
B_{e_4}^\trop
=
e_1+e_2+e_3+e_4=B_2^{\mathrm{ov},\trop},
\qquad
B_{e_5}^\trop
=
e_1+e_2+e_3+e_5=B_1^{\mathrm{ov},\trop}
+
B_2^{\mathrm{ov},\trop}.
\]
The degeneration-adapted period matrix is
\[
\Pi^{\mathrm{deg},\trop}
=
\begin{pmatrix}
21&18\\
18&21
\end{pmatrix}.
\]
We choose the tropical basepoint
$
\baseptT=v_1.
$
Since $B_{e_4}^\trop$ (resp. $B_{e_5}^\trop$) traverses $e_4$ (resp. $e_5$) from $v_1$ to $v_2$, the point
$
s_{e_4}^+
$
(resp.
$
s_{e_5}^+
$
) lies on the half-edge of $\tropmcurve_{\mathrm{cut}}$ incident to
$v_2$.

First consider $e=e_4$. Since
$\omega_{e_4}^{\mathrm{deg},\trop}$ vanishes on $e_5$, we have
\[
\int_{v_1}^{s_{e_5}^+}
\omega_{e_4}^{\mathrm{deg},\trop}
=
-(5+8+5)
=
-18.
\]
Using
$
\Pi_{e_4e_4}^{\mathrm{deg},\trop}=21$ and evaluating~\eqref{eq:RC_deg_trop}, we get
\[
\RC_{e_4}^{\mathrm{deg},\trop}
=
-\frac12\cdot21-18
=
-\frac{57}{2}.
\]
By symmetry under exhcanging $e_4$ and $e_5$, we get
\[
\vecRC^{\mathrm{deg},\trop}
=
\begin{pmatrix}
-{57}/{2}\\
-{57}/{2}
\end{pmatrix}.
\]
\end{example}

The rest of this section is devoted to the proof of
\cref{prop:riemann_constants_trop}.

Let $\mcurve$ be a compact Riemann surface of genus $g$, let
$\basept\in\mcurve$ be a basepoint, and let $(\bm A,\bm B)$ be a
symplectic basis represented by a canonical system of loops based at
$\basept$. Let $\bm\omega$ be the corresponding normalized basis of
holomorphic differentials, let $\Pi$ be the period matrix, and let
$\vecRC$ be the vector of Riemann constants.

Dissecting $\mcurve$ along $(\bm A,\bm B)$ gives a polygon
$\mathfrak P$. Each $A$-cycle $A_j$ has two lifts to $\mathfrak P$;
label them $A_j^-$ and $A_j^+$ so that the corresponding $B$-cycle
crosses from $A_j^-$ to $A_j^+$. Then, Fay~\cite[Eq.~(13)]{Fay} gives
\begin{equation}
\label{eq:Fay_cRS}
\RC_j
=
\pi\imunit
-\frac{1}{2}\Pi_{jj}
+
\frac{1}{2\pi\imunit}
\sum_{k\neq j}
\int_{\zeta\in A_k^+}
\omega_k(\zeta)
\int_{\basept}^{\zeta}
\omega_j,
\qquad
1\leq j\leq g,
\end{equation}
where the inner integral is taken along the unique path from $\basept$
to $\zeta$ contained in $\mathfrak P$.

In order to apply \eqref{eq:Fay_cRS} to $\mcurve(t)$, we first choose
representatives of the degeneration-adapted symplectic basis that pass
through the basepoint $\basept(t)$. We use the component $A$-cycles
$A_{v,a}^{\mathrm{ov}}(t)$ and the seam cycles
$A_e^{\mathrm{seam}}(t)$ themselves, without homotoping the latter out
of the plumbing collars.

For each $A_k^{\mathrm{deg}}(t)$, choose a point
$
\eta_k(t)\in A_k^{\mathrm{deg}}(t).
$
Define
\[
\tau_k^\trop
:=
\begin{cases}
v,
&
k=(v,a),
\\
s_e^+,
&
k=e\notin\edges(T).
\end{cases}
\]
Let $\lambda_k^\trop$ be the unique path in
$\tropmcurve_{\mathrm{cut}}$ from $\baseptT$ to
$\tau_k^\trop$, and choose a degeneration-adapted lift $\lambda_k(t)$
joining $\basept(t)$ to $\eta_k(t)$. In the case
$
k=e\notin\edges(T),
$
we choose $\lambda_k(t)$ to approach the seam from the side
corresponding to $s_e^+$. We may then replace
$A_k^{\mathrm{deg}}(t)$ by
\[
\widehat A_k^{\mathrm{deg}}(t)
=
\lambda_k(t)
\cdot
A_k^{\mathrm{deg}}(t)
\cdot
\lambda_k(t)^{-1}.
\]
After a small perturbation, we may replace
$\widehat A_k^{\mathrm{deg}}(t)$ by a simple loop based at
$\basept(t)$. Similarly, we choose representatives
$\widehat B_k^{\mathrm{deg}}(t)$ of the $B$-cycles passing through
$\basept(t)$ so that
\[
(
\widehat{\bm A}^{\mathrm{deg}}(t),
\widehat{\bm B}^{\mathrm{deg}}(t)
)
\]
forms a canonical system of loops based at $\basept(t)$. Dissecting
$\mcurve(t)$ along these loops, we obtain a polygon
$\mathfrak P_{\mcurve(t)}^{\mathrm{deg}}$. Under tropicalization, the
two copies of $A_e^{\mathrm{seam}}(t)$ on the boundary of this polygon
correspond to $s_e^-$ and $s_e^+$, with
$(A_e^{\mathrm{seam}}(t))^+$ corresponding to $s_e^+$.

Index the degeneration-adapted $A$-cycles by $\{1,\dots,g\}$ so that
the indices corresponding to
$
\{
(v,a)
\mid
v\in\vertices(\tropmcurve),
\ 1\leq a\leq g_v
\}
$
appear first and followed by the indices corresponding to
$
\edges(\tropmcurve)\setminus\edges(T)
$. Applying \eqref{eq:Fay_cRS}, we obtain
\[
\RC_j^{\mathrm{deg}}(t)
=
\pi\imunit
-
\frac{1}{2}\Pi_{jj}^{\mathrm{deg}}(t)
+
\frac{1}{2\pi\imunit}
\sum_{k\neq j}
\int_{\zeta\in
(\widehat A_k^{\mathrm{deg}}(t))^+}
\omega_k^{\mathrm{deg}}(t)(\zeta)
\int_{\basept(t)}^\zeta
\omega_j^{\mathrm{deg}}(t).
\]
We now show that we may replace
$\widehat A_k^{\mathrm{deg}}(t)$ by $A_k^{\mathrm{deg}}(t)$ in this
formula.

\begin{lemma}
\label{lem:Fay_RC_deg}
For $1\leq j\leq g$, we have
\begin{equation}
\label{eq:Fay_RC_deg}
\RC_j^{\mathrm{deg}}(t)
=
\pi\imunit
-
\frac{1}{2}\Pi_{jj}^{\mathrm{deg}}(t)
+
\frac{1}{2\pi\imunit}
\sum_{k\neq j}
\int_{\zeta\in(A_k^{\mathrm{deg}}(t))^+}
\omega_k^{\mathrm{deg}}(t)(\zeta)
\int_{\basept(t)}^\zeta
\omega_j^{\mathrm{deg}}(t).
\end{equation}
\end{lemma}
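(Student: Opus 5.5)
Starting from the formula obtained by applying \eqref{eq:Fay_cRS} to the canonical system $(\widehat{\bm A}^{\mathrm{deg}}(t),\widehat{\bm B}^{\mathrm{deg}}(t))$, I will compare each term
\[
I_{jk}:=\int_{\zeta\in(\widehat A_k^{\mathrm{deg}}(t))^+}\omega_k^{\mathrm{deg}}(t)(\zeta)\int_{\basept(t)}^\zeta\omega_j^{\mathrm{deg}}(t)
\]
with the corresponding term $I_{jk}^{\circ}$ in which $\widehat A_k^{\mathrm{deg}}(t)$ is replaced by $A_k^{\mathrm{deg}}(t)$. Here the inner integral in $I_{jk}^{\circ}$ runs from $\basept(t)$ along $\lambda_k(t)$ to $\eta_k(t)$ and then along the $+$ side of $A_k^{\mathrm{deg}}(t)$ to $\zeta$. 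The goal is to show $I_{jk}=I_{jk}^{\circ}$ for each $k\neq j$.

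\emph{Step 1: splitting the loop.} The boundary arc $(\widehat A_k^{\mathrm{deg}}(t))^+$ of the dissected polygon $\mathfrak P^{\mathrm{deg}}_{\mcurve(t)}$ decomposes into three pieces: the tail $\lambda_k(t)$, the $+$ copy of $A_k^{\mathrm{deg}}(t)$, and the reversed tail $\lambda_k(t)^{-1}$. This decomposition holds up to the small perturbation used to make the loop simple, and since the integrands are continuous, that perturbation does not change the integral.

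\emph{Step 2: the core piece.} On the middle piece, the inner integral $\int_{\basept}^\zeta\omega_j$ taken inside $\mathfrak P$ is exactly the one described above for $I_{jk}^{\circ}$. This is because the path inside the polygon runs along the tail and then along $A_k^+$. So this piece reproduces $I_{jk}^{\circ}$.

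\emph{Step 3: cancellation of the tails.} The key point is that $\lambda_k(t)$ and $\lambda_k(t)^{-1}$ are traversed in opposite directions. I still need to check that the inner integrals agree at corresponding points on the two tails.

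At a point $\zeta$ on the outgoing tail, the inner integral is $\int_{\lambda_k|_{[\basept,\zeta]}}\omega_j$.

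At the corresponding point $\zeta'$ on the returning tail, the path inside $\mathfrak P$ goes along the whole tail and around all of $A_k$, then back. Its inner integral therefore differs from the value at $\zeta$ by $\int_{A_k}\omega_j=2\pi\imunit\delta_{jk}$. This difference vanishes because $k\neq j$.

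Hence the two tail contributions cancel exactly. This uses that the inner path in the polygon is homotopic, relative to its endpoints, to the concatenation just described, and that $\omega_j$ is closed.

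\emph{Main obstacle.} I expect the delicate point to be the seam cycles $k=e\notin\edges(T)$. There one must make sure of two things:
\begin{itemize}
\item the $+$ copy of $A_e^{\mathrm{seam}}(t)$ in the polygon is the side approached by $\lambda_e(t)$, which was arranged by choosing $\lambda_e(t)$ to approach from the $s_e^+$ side;
\item the perturbation to simple loops based at $\basept(t)$ keeps the homotopy classes of the inner paths in $\mathfrak P$.
\end{itemize}
Once the polygon's boundary word is identified as the standard $\prod_k \widehat A_k\widehat B_k\widehat A_k^{-1}\widehat B_k^{-1}$, with each $\widehat A_k$ conjugated by its tail, the argument of Step 3 applies uniformly to all $k$. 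This gives \eqref{eq:Fay_RC_deg}.
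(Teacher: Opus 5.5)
Your proposal is correct and is the paper's argument written out in more detail: the two copies of $\lambda_k(t)$ in $(\widehat A_k^{\mathrm{deg}}(t))^+$ are traversed in opposite directions. The inner integral takes the same value at corresponding points on them, because it changes by $\int_{A_k^{\mathrm{deg}}(t)}\omega_j^{\mathrm{deg}}(t)=2\pi\imunit\delta_{jk}=0$, so the tail contributions cancel. One small correction: the perturbation to a simple loop leaves the outer integral exactly unchanged because $\omega_k^{\mathrm{deg}}(t)\int^{\zeta}\omega_j^{\mathrm{deg}}(t)$ is holomorphic on the universal cover, so only the homotopy class relative to endpoints matters; continuity of the integrand alone would not give this.
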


\begin{proof}
By $A$-normalization of $\bm\omega^{\mathrm{deg}}(t)$, for $k\neq j$,
after traversing $A_k^{\mathrm{deg}}(t)$ the function
\[
\zeta
\mapsto
\int_{\basept(t)}^\zeta
\omega_j^{\mathrm{deg}}(t)
\]
changes by
\[
\int_{A_k^{\mathrm{deg}}(t)}
\omega_j^{\mathrm{deg}}(t)
=
2\pi\imunit\,\delta_{kj}
=
0.
\]
Therefore, this function has the same value on the two copies of
$\lambda_k(t)$, and their contributions cancel.
\end{proof}

The following is obtained by summing \cite[Eq.~(3.11)]{HuNorton} over all $k$ and using \cite[Lemma~3.2]{HuNorton}.

\begin{lemma}
\label{lem:estimate_on_seams}
Let $\Omega_\bullet=(\Omega_v)_{v\in\vertices(\tropmcurve)}$ be a stable
differential on $\mcurve(\infty)$, with associated tropical $1$-form
$\Omega_\bullet^\trop$, and let $\Omega(t)$ be the family of holomorphic
differentials on $\mcurve(t)$ provided by
\cref{thm:HuNorton:main1}. For
$
e\in\edges(\tropmcurve)\setminus\edges(T),
$
we have
\[
\Omega(t)
=
\left(
\frac{\Omega_\bullet^\trop(h_e)}{z_{h_e}}
+
O(1)
\right)
dz_{h_e}
\]
uniformly on $A_e^{\mathrm{seam}}(t)$.
\end{lemma}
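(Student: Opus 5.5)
We will prove the estimate by Laurent-expanding $\Omega(t)$ on the plumbing collar of $e=\{h_e,\bar h_e\}$. The residue term will be pinned down exactly by the seam-period normalization of \cref{thm:HuNorton:main1}, and all other terms will be controlled by Cauchy estimates on the two boundary circles of the collar.

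\emph{Step 1 (Laurent splitting).} Write $z=z_{h_e}(t)$. The collar $K_{h_e}(t)=\{|q_e(t)|/\rho<|z|<\rho\}$ is an annulus on which $\Omega(t)$ is holomorphic, so it has a Laurent expansion
\[
\Omega(t)=\Bigl(\sum_{n\in\Z}a_n(t)z^n\Bigr)\,dz .
\]
Split this as $f_+(z)\,dz + a_{-1}(t)\,dz/z + f_-(z)\,dz$, where $f_+$ collects the powers $n\ge0$ and $f_-$ the powers $n\le-2$.

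\emph{Step 2 (the residue term).} Since $A_e^{\mathrm{seam}}(t)$ is positively oriented in the $z$-coordinate, $a_{-1}(t)=\frac{1}{2\pi\imunit}\int_{A_e^{\mathrm{seam}}(t)}\Omega(t)$. By the normalization in \cref{thm:HuNorton:main1}, this seam period equals $2\pi\imunit\,\res_{n_{h_e}}\Omega_{v(h_e)}$. Hence $a_{-1}(t)=\Omega_\bullet^\trop(h_e)$ exactly, for all $t\gg1$.

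\emph{Step 3 (the two tails).} The circle $\{|z_{h_e}(t)|=\rho\}$ is the image under $\Phi_{v(h_e)}(\cdot;t^{-1})$ of a fixed circle in the reference core, and the trivialization respects the boundary coordinates. By the uniform convergence in \cref{thm:HuNorton:main1}, the coefficient of $dz$ in $\Omega(t)$ is bounded on $|z|=\rho$ by a constant $M$ independent of $t$. If the circle is too close to the boundary for this, shrink $\rho$ slightly at the outset so that the circle lies in a compact subset of $\mcurve_{v(h_e)}\setminus\{n_h\}$.

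Cauchy's formula over $|z|=\rho$ then gives $|a_n|\le M\rho^{-n}$ for $n\ge0$. On $|z|=\sqrt{|q_e|}$, summing the geometric series yields $|f_+|\le 2M$ for $t\gg1$, since $\sqrt{|q_e|}\ll\rho$.

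For the negative tail we use the other end of the collar. In the coordinate $w=z_{\bar h_e}(t)=q_e/z$ we have $dz=-q_e\,w^{-2}\,dw$. The same argument on $|w|=\rho$, inside the core of $v(\bar h_e)$, bounds the coefficients of $\Omega(t)$ written as $\sum_m b_m w^m\,dw$ by $|b_m|\le M'\rho^{-m}$. Converting back, the term $b_m w^m\,dw$ equals $-b_m q_e^{m+1}z^{-m-2}\,dz$, which accounts for the coefficient $a_{-m-2}$. On the seam $|z|=\sqrt{|q_e|}$ its size is at most
\[
M'\rho^{-m}|q_e|^{(m+1)/2}|q_e|^{-(m+2)/2}=M'\rho^{-m}|q_e|^{-1/2}.
\]
This looks borderline, so we instead estimate the series directly: $\sum_m b_m w^m$ with $|w|=\sqrt{|q_e|}$ is bounded by $2M'$. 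Consequently
\[
|f_-(z)\,dz|=|g_-(w)|\,|q_e|\,|w|^{-2}\,|dw|
\]
is bounded in terms of $|dz|$. Indeed, on the seam $|dw|/|w|=|dz|/|z|$, so $|f_-(z)|\le 2M'|q_e|/(|w|\,|z|)=2M'$ there. This gives $f_-=O(1)$ uniformly on the seam, which completes the proof.

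The main obstacle is the uniformity in Step 3: that the bounds $M,M'$ on the boundary circles $\{|z_h(t)|=\rho\}$ really are independent of $t$. This relies on the moduli $\bm u(t)$ varying with $t$, which is covered by \cite[Remark~2.4]{HuNorton}, and on the compatibility of $\Phi_v$ with the boundary coordinates. This is exactly the content we would take from \cite[Eq.~(3.11) and Lemma~3.2]{HuNorton}.
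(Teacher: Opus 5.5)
Your argument is correct, but it takes a different route from the paper. The paper does not argue on the collar at all. It reads the estimate off from the internals of Hu--Norton's construction: it sums their collar bound \cite[Eq.~(3.11)]{HuNorton} over the terms of their series and uses \cite[Lemma~3.2]{HuNorton} for convergence.

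You instead derive the lemma from two facts already packaged in \cref{thm:HuNorton:main1}:
\begin{itemize}
\item The exact seam normalization, which pins the $z^{-1}$ Laurent coefficient to $\Omega_\bullet^\trop(h_e)$ for every $t\gg1$. This is precisely where $e\notin\edges(T)$ enters.
\item The uniform convergence on the cores, which bounds $\Omega(t)$ on the two boundary circles $|z_{h_e}(t)|=\rho$ and $|z_{\bar h_e}(t)|=\rho$.
\end{itemize}
Cauchy estimates at the two ends of the annulus then control the tails $n\geq 0$ and $n\leq -2$ on the seam. For the second tail you pass to $w=q_e/z$, where $|dw/dz|=1$ on $|z|=\sqrt{|q_e|}$.

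Your route is more self-contained: it uses nothing from Hu--Norton beyond the statement of \cref{thm:HuNorton:main1}. So your closing remark, that the uniformity is ``exactly the content'' of \cite[Eq.~(3.11), Lemma~3.2]{HuNorton}, undersells what you did. The paper's citation is shorter, but it depends on the details of how Hu--Norton build $\Omega(t)$.

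Two small points to clean up.

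First, $\Phi_v$ is only smooth, so justify the bound on the boundary circle by evaluating the pulled-back form on the tangent vector $\partial_\theta$ of that circle. There $\Phi_v$ preserves $z_h$, so $(\Phi_v^*\Omega(t))(\partial_\theta)=\imunit\rho e^{\imunit\theta}f(\rho e^{\imunit\theta})$, where $f$ is the coefficient of $dz$. Uniform convergence then bounds $|f|$ on $|z|=\rho$ independently of $t$.

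Second, your ``borderline'' detour comes from an arithmetic slip. The term $b_m w^m\,dw=-b_m q_e^{m+1}z^{-m-2}\,dz$ has coefficient of size
\[
|b_m|\,|q_e|^{m+1}|q_e|^{-(m+2)/2}=|b_m|\,|q_e|^{m/2}\leq M'\rho^{-m}|q_e|^{m/2}
\]
on the seam. This sums directly to at most $2M'$, so no detour is needed. Also, the function you call $g_-$ is the nonnegative-power part of the $w$-expansion.
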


Define
\[
\omega_j^{\mathrm{deg},\trop}
:=
\begin{cases}
0,
&
j=(v,a),
\\
\omega_e^{\mathrm{deg},\trop},
&
j=e\notin\edges(T).
\end{cases}
\]

\begin{lemma}
\label{lem:uniform_A_cycle_asymptotics}
Let $1\leq j,k\leq g$ with $j\neq k$. Then, uniformly for
$\zeta\in(A_k^{\mathrm{deg}}(t))^+$,
\begin{equation}
\label{eq:uniform_A_cycle_asymptotics_first}
\int_{\basept(t)}^\zeta
\omega_j^{\mathrm{deg}}(t)
=
-
\left(
\int_{\baseptT}^{\tau_k^\trop}
\omega_j^{\mathrm{deg},\trop}
\right)
\log t
+
O(1).
\end{equation}
Moreover,
\[
\int_{(A_k^{\mathrm{deg}}(t))^+}
\left|
\omega_k^{\mathrm{deg}}(t)
\right|
=
O(1).
\]
Consequently,
\[
\frac{1}{2\pi\imunit}
\int_{\zeta\in(A_k^{\mathrm{deg}}(t))^+}
\omega_k^{\mathrm{deg}}(t)(\zeta)
\int_{\basept(t)}^\zeta
\omega_j^{\mathrm{deg}}(t)
=
-
\left(
\int_{\baseptT}^{\tau_k^\trop}
\omega_j^{\mathrm{deg},\trop}
\right)
\log t
+
O(1).
\]
\end{lemma}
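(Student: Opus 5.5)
We prove the three assertions of \cref{lem:uniform_A_cycle_asymptotics} in turn. Fix $\zeta\in(A_k^{\mathrm{deg}}(t))^+$ and write the path from $\basept(t)$ to $\zeta$ inside $\mathfrak P_{\mcurve(t)}^{\mathrm{deg}}$ as $\lambda_k(t)$ followed by an arc $\kappa(t,\zeta)$ of $A_k^{\mathrm{deg}}(t)$ from $\eta_k(t)$ to $\zeta$. This decomposition is allowed because, by the argument in \cref{lem:Fay_RC_deg}, the value of $\int_{\basept(t)}^\zeta\omega_j^{\mathrm{deg}}(t)$ on the $+$ copy is obtained by travelling along $\lambda_k(t)$ and then along the cycle. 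Since $\lambda_k(t)$ is a degeneration-adapted family with tropicalization $\lambda_k^\trop$ (from $\baseptT$ to $\tau_k^\trop$ in $\tropmcurve_{\mathrm{cut}}$), \cref{cor:abelmap} gives
\[
\int_{\lambda_k(t)}\omega_j^{\mathrm{deg}}(t)=-\left(\int_{\baseptT}^{\tau_k^\trop}\omega_j^{\mathrm{deg},\trop}\right)\log t+O(1).
\]
For $j=(v,a)$ the coefficient is $0$, which matches the definition of $\omega_j^{\mathrm{deg},\trop}$. It therefore remains to show that $\int_{\kappa(t,\zeta)}\omega_j^{\mathrm{deg}}(t)=O(1)$ uniformly in $\zeta$.

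We bound the arc integral by cases. If $k=(v,a)$, then $A_k^{\mathrm{deg}}(t)=\Phi_v(A_{v,a}^{\mathrm{ov}};t^{-1})$ lies in the core. By \cref{thm:HuNorton:main1}, the pullbacks $\Phi_v^*\omega_j^{\mathrm{deg}}(t)$ converge uniformly on the fixed compact cycle $A^{\mathrm{ov}}_{v,a}$ to the stable limit. Hence $\int|\omega_j^{\mathrm{deg}}(t)|$ over the whole cycle is $O(1)$; the same holds with $j$ replaced by $k$. Since $\Phi_v$ is only smooth, we pull back the forms as smooth $1$-forms and compare them in a fixed metric.

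If $k=e\notin\edges(T)$, the cycle is the seam circle $|z_{h_e}|=\sqrt{|q_e|}$. By \cref{lem:estimate_on_seams},
\[
\omega_j^{\mathrm{deg}}(t)=\bigl(c_j/z_{h_e}+O(1)\bigr)\,dz_{h_e},
\]
where $c_j=\omega_j^{\mathrm{deg},\trop}(h_e)$ (and $c_j=0$ for component indices). Parametrize the seam by $z_{h_e}=\sqrt{|q_e|}\,e^{\imunit\phi}$. The term $c_j\,dz/z=\imunit c_j\,d\phi$ has total variation at most $2\pi|c_j|$. The remainder contributes at most $O(1)\cdot 2\pi\sqrt{|q_e|}\to0$. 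Both bounds are uniform in $\zeta$, so the arc integral is $O(1)$. The same estimate with $j$ replaced by $k$ gives $\int_{(A_k)^+}|\omega_k^{\mathrm{deg}}(t)|=O(1)$, which is the second claim.

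The final claim follows by splitting
\[
\int_{\basept(t)}^\zeta\omega_j^{\mathrm{deg}}(t)=L(t)+R(t,\zeta),
\]
where $L(t)$ is the $\zeta$-independent $\log t$ term and $\sup_\zeta|R(t,\zeta)|=O(1)$. The constant part integrates against $\omega_k^{\mathrm{deg}}(t)$ over $(A_k^{\mathrm{deg}}(t))^+$ to $2\pi\imunit L(t)$ by $A$-normalization. The remainder contributes at most $\sup|R|\cdot\int|\omega_k^{\mathrm{deg}}(t)|=O(1)$. Dividing by $2\pi\imunit$ gives the stated asymptotics.

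The main obstacle is the seam case. There the seam lies deep inside the collar, so \cref{thm:HuNorton:main1} gives no control and we must rely on \cref{lem:estimate_on_seams}. We also need to check that the side from which $\lambda_k(t)$ approaches the seam (the $s_e^+$ side) is the one whose tropical endpoint $\tau_k^\trop=s_e^+$ appears in \cref{cor:abelmap}. This requires the collar endpoint $\eta_k(t)$ to satisfy \eqref{eq:in_pt_trop} with limit $\tfrac12\ell(e)$ measured from the appropriate vertex, which holds because $|z_{h_e}|=\sqrt{|q_e|}$ and $\val(q_e)=-\ell(e)$.
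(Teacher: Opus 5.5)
Your proof is correct and takes essentially the same route as the paper. You split the path into a degeneration-adapted lift of $\lambda_k^\trop$, handled by \cref{cor:abelmap} (equivalently \cref{thm:tropical_integration}), plus an arc of $A_k^{\mathrm{deg}}(t)$ that is bounded via \cref{thm:HuNorton:main1} in the core case and via \cref{lem:estimate_on_seams} in the seam case; you then conclude with the same sup-times-total-variation estimate, using $\int_{A_k^{\mathrm{deg}}(t)}\omega_k^{\mathrm{deg}}(t)=2\pi\imunit$. Your explicit parametrization of the seam, giving the bound $2\pi|c_j|+O(\sqrt{|q_e|})$, simply spells out the paper's one-line appeal to \cref{lem:estimate_on_seams}.
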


\begin{proof}
Choose $\eta_k(t)\in(A_k^{\mathrm{deg}}(t))^+$ as above and choose a
path in $\mathfrak P_{\mcurve(t)}^{\mathrm{deg}}$ from $\basept(t)$ to
$\eta_k(t)$ that is a degeneration-adapted lift of the unique path in
$\tropmcurve_{\mathrm{cut}}$ from $\baseptT$ to
$\tau_k^\trop$. By
\cref{thm:tropical_integration},
\begin{equation}
\label{eq:prof_1}
\int_{\basept(t)}^{\eta_k(t)}
\omega_j^{\mathrm{deg}}(t)
=
-
\left(
\int_{\baseptT}^{\tau_k^\trop}
\omega_j^{\mathrm{deg},\trop}
\right)
\log t
+
O(1).
\end{equation}

Suppose first that $k=(v,a)$. Then
$A_k^{\mathrm{deg}}(t)=A_{v,a}^{\mathrm{ov}}(t)$ is contained in a
component core, so for
$\zeta\in(A_k^{\mathrm{deg}}(t))^+$, the path from $\eta_k(t)$ to
$\zeta$ may be chosen inside the same component core. By
\cref{thm:HuNorton:main1}, after identifying the cores using the
trivialization $\bm\Phi$, the differentials
$\omega_j^{\mathrm{deg}}(t)$ converge uniformly on a compact
neighborhood of $A_k^{\mathrm{deg}}(t)$. Therefore,
\begin{equation}
\label{eq:prof_2}
\int_{\eta_k(t)}^\zeta
\omega_j^{\mathrm{deg}}(t)
=
O(1)
\end{equation}
uniformly in $\zeta$.

Now suppose that $k=e\notin\edges(T)$. The points
$\eta_k(t)$ and $\zeta$ lie on the same copy
$(A_e^{\mathrm{seam}}(t))^+$ of the seam. By
\cref{lem:estimate_on_seams}, the integral of
$\omega_j^{\mathrm{deg}}(t)$ along any subarc of the seam is uniformly
bounded. Hence, again,
\begin{equation}
\int_{\eta_k(t)}^\zeta
\omega_j^{\mathrm{deg}}(t)
=
O(1)
\end{equation}
uniformly in $\zeta$. Combining this with \eqref{eq:prof_1} gives
\eqref{eq:uniform_A_cycle_asymptotics_first}.

For $k=(v,a)$, the same uniform convergence on the component core,
applied to $\omega_k^{\mathrm{deg}}(t)$, gives
\[
\int_{(A_k^{\mathrm{deg}}(t))^+}
\left|
\omega_k^{\mathrm{deg}}(t)
\right|
=
O(1).
\]
For $k=e\notin\edges(T)$, the same estimate follows from
\cref{lem:estimate_on_seams}. This proves the first two assertions.

By the first two assertions, there exist constants $M_1,M_2>0$ such
that, for all sufficiently large $t$,
\[
\frac{1}{2\pi}
\int_{(A_k^{\mathrm{deg}}(t))^+}
\left|
\omega_k^{\mathrm{deg}}(t)
\right|
\leq
M_1,
\qquad
\sup_{\zeta\in(A_k^{\mathrm{deg}}(t))^+}
\left|
\int_{\basept(t)}^\zeta
\omega_j^{\mathrm{deg}}(t)
-
c\log t
\right|
\leq
M_2,
\]
where
$
c
:=
-
\int_{\baseptT}^{\tau_k^\trop}
\omega_j^{\mathrm{deg},\trop}.
$
Using the normalization
\[
\int_{(A_k^{\mathrm{deg}}(t))^+}
\omega_k^{\mathrm{deg}}(t)
=
2\pi\imunit,
\]
we obtain
\begin{align*}
&
\left|
\frac{1}{2\pi\imunit}
\int_{\zeta\in(A_k^{\mathrm{deg}}(t))^+}
\omega_k^{\mathrm{deg}}(t)(\zeta)
\int_{\basept(t)}^\zeta
\omega_j^{\mathrm{deg}}(t)
-
c\log t
\right|
\\
&\qquad=
\left|
\frac{1}{2\pi\imunit}
\int_{\zeta\in(A_k^{\mathrm{deg}}(t))^+}
\omega_k^{\mathrm{deg}}(t)(\zeta)
\left(
\int_{\basept(t)}^\zeta
\omega_j^{\mathrm{deg}}(t)
-
c\log t
\right)
\right|
\\
&\qquad\leq
\frac{1}{2\pi}
\int_{(A_k^{\mathrm{deg}}(t))^+}
\left|
\omega_k^{\mathrm{deg}}(t)
\right|
\sup_{\zeta\in(A_k^{\mathrm{deg}}(t))^+}
\left|
\int_{\basept(t)}^\zeta
\omega_j^{\mathrm{deg}}(t)
-
c\log t
\right|
\\
&\qquad\leq
M_1M_2.
\end{align*}
Therefore,
\[
\frac{1}{2\pi\imunit}
\int_{\zeta\in(A_k^{\mathrm{deg}}(t))^+}
\omega_k^{\mathrm{deg}}(t)(\zeta)
\int_{\basept(t)}^\zeta
\omega_j^{\mathrm{deg}}(t)
=
-
\left(
\int_{\baseptT}^{\tau_k^\trop}
\omega_j^{\mathrm{deg},\trop}
\right)
\log t
+
O(1).
\]
\end{proof}

\begin{proof}[Proof of \cref{prop:riemann_constants_trop}]
Suppose first that
$
j=(v,a).
$
The stable differential associated to
$\omega_{v,a}^{\mathrm{deg}}(t)$ has zero associated tropical $1$-form.
Hence, by \cref{lem:uniform_A_cycle_asymptotics}, for every
$k\neq(v,a)$,
\[
\frac{1}{2\pi\imunit}
\int_{\zeta\in(A_k^{\mathrm{deg}}(t))^+}
\omega_k^{\mathrm{deg}}(t)(\zeta)
\int_{\basept(t)}^\zeta
\omega_{v,a}^{\mathrm{deg}}(t)
=
O(1).
\]
Moreover, by \cref{cor:periodtropical},
$
\Pi_{(v,a),(v,a)}^{\mathrm{deg}}(t)
=
O(1),
$
and therefore,
\[
\RC_{v,a}^{\mathrm{deg}}(t)
=
O(1).
\]
Now let
$
j=e\notin\edges(T).
$
By \cref{cor:periodtropical},
\[
\Pi_{ee}^{\mathrm{deg}}(t)
=
-
\Pi_{ee}^{\mathrm{deg},\trop}
\log t
+
O(1).
\]

For a component index $k=(v,a)$,
\cref{lem:uniform_A_cycle_asymptotics} gives
\[
\frac{1}{2\pi\imunit}
\int_{\zeta\in(A_{v,a}^{\mathrm{ov}}(t))^+}
\omega_{v,a}^{\mathrm{deg}}(t)(\zeta)
\int_{\basept(t)}^\zeta
\omega_e^{\mathrm{deg}}(t)
=
-
\left(
\int_{\baseptT}^{v}
\omega_e^{\mathrm{deg},\trop}
\right)
\log t
+
O(1).
\]
Summing over $1\leq a\leq g_v$ and then over
$v\in\vertices(\tropmcurve)$ gives
\[
-
\sum_{v\in\vertices(\tropmcurve)}
g_v
\left(
\int_{\baseptT}^{v}
\omega_e^{\mathrm{deg},\trop}
\right)
\log t
+
O(1).
\]

Next suppose that
$
k=e'\neq e$ with $e'\notin\edges(T).
$
Again by \cref{lem:uniform_A_cycle_asymptotics},
\[
\frac{1}{2\pi\imunit}
\int_{\zeta\in(A_{e'}^{\mathrm{seam}}(t))^+}
\omega_{e'}^{\mathrm{deg}}(t)(\zeta)
\int_{\basept(t)}^\zeta
\omega_e^{\mathrm{deg}}(t)
=
-
\left(
\int_{\baseptT}^{s_{e'}^+}
\omega_e^{\mathrm{deg},\trop}
\right)
\log t
+
O(1).
\]
Summing over $e'\neq e$ and substituting these asymptotics into
\eqref{eq:Fay_RC_deg}, we obtain
\[
\RC_e^{\mathrm{deg}}(t)
=
\left(
\frac12\Pi_{ee}^{\mathrm{deg},\trop}
-
\sum_{v\in\vertices(\tropmcurve)}
g_v
\int_{\baseptT}^{v}
\omega_e^{\mathrm{deg},\trop}
-
\sum_{\substack{
e'\neq e\\
e'\notin\edges(T)
}}
\int_{\baseptT}^{s_{e'}^+}
\omega_e^{\mathrm{deg},\trop}
\right)
\log t
+
O(1).
\]
Therefore,
\[
\vecRC^{\mathrm{deg}}(t)
=
-
\begin{pmatrix}
0\\
\vecRC^{\mathrm{deg},\trop}
\end{pmatrix}
\log t
+
O(1).
\]
\end{proof}

\subsection{The theta function}
\label{sec:break_divisor}

Following Mikhalkin--Zharkov~\cite[Section~5.2]{MZ}, but using the
max-plus convention, we define the \emph{tropical theta function}
associated to the tropical period matrix $\Pi^{\mathrm{deg},\trop}$ by
\[
\theta^\trop(\cdot \mid \Pi^{\mathrm{deg},\trop}):\R^{g^\trop}\to\R,\qquad
\theta^\trop
( \ptCg^\trop \mid\Pi^{\mathrm{deg},\trop} )
:=
\max_{\bm{l}\in\Z^{g^\trop}}
\left(
-\frac12
\<\bm{l},\Pi^{\mathrm{deg},\trop}\bm{l}\>
-
\<\bm{l},\ptCg^\trop \>
\right).
\]

\begin{proposition}
\label{lem:theta_tropicalization}
Suppose that $\ptCg(t)\in\R^g$ and that
\begin{equation} \label{eq:asymp_z_t}
\ptCg(t)
=
-
\begin{pmatrix}
0\\
\ptCg^\trop
\end{pmatrix}
\log t
+
O(1)
\end{equation}
for some $\ptCg^\trop\in\R^{g^\trop}$. Then,
\[
\log
\theta
(
\ptCg(t)\mid\Pi^{\mathrm{deg}}(t)
)
=
\theta^\trop
(
\ptCg^\trop\mid\Pi^{\mathrm{deg},\trop}
)
\log t
+
O(1).
\]
\end{proposition}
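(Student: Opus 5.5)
The plan is to compare the theta series term by term with the tropical maximum, using positivity to rule out cancellation. Write $\bm m=(\bm m_{\mathrm{c}},\bm l)\in\Z^{g^{\mathrm{comp}}}\times\Z^{g^\trop}$ according to the block decomposition of the degeneration-adapted basis. Write $\Pi^{\mathrm{deg}}(t)=-D\log t+Q(t)$ with $D:=\begin{pmatrix}0&0\\0&\Pi^{\mathrm{deg},\trop}\end{pmatrix}$ and $Q(t)=O(1)$, as given by \cref{cor:periodtropical}. Similarly write $\ptCg(t)=-(0,\ptCg^\trop)\log t+\bm b(t)$ with $\bm b(t)=O(1)$.

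\textbf{Step 1: reality and positivity.} First I will check that $\Pi^{\mathrm{deg}}(t)$ is real. In $(\bm A^{\mathrm{deg}}(t),\bm B^{\mathrm{deg}}(t))$ every $B$-cycle is an integral combination of ovals, hence invariant under $\sigma(t)$. The component $A$-cycles are anti-invariant by construction. I expect the seam cycles to be anti-invariant as well, since $z_h(t)$ is compatible with conjugation and the seam is a circle $|z_h|=\mathrm{const}$. The argument of \cite[Lemma~11]{BDdT} then applies verbatim: the normalized differentials satisfy $\sigma^*\bar\omega=\omega$, so their $B$-periods are real. Since $\ptCg(t)\in\R^g$ by hypothesis, every term $\exp(\tfrac12\langle\bm m,\Pi^{\mathrm{deg}}(t)\bm m\rangle+\langle\bm m,\ptCg(t)\rangle)$ is a positive real number. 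I expect this step to be the main point needing care; if the seam cycles fail to be exactly anti-invariant, I would compare with the oval-adapted basis via an integral symplectic change of basis and use \cref{prop:omega_and_delta_pos}\itemref{BDT2}.

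\textbf{Step 2: lower bound.} Let $\bm l^*$ attain the maximum defining $\theta^\trop(\ptCg^\trop\mid\Pi^{\mathrm{deg},\trop})$; it exists because $\Pi^{\mathrm{deg},\trop}$ is positive definite. By positivity, $\theta(\ptCg(t)\mid\Pi^{\mathrm{deg}}(t))$ is at least the single term with $\bm m=(0,\bm l^*)$. That term equals $\exp\bigl(\theta^\trop(\ptCg^\trop\mid\Pi^{\mathrm{deg},\trop})\log t+O(1)\bigr)$, which gives the lower bound.

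\textbf{Step 3: upper bound.} Set $T(\bm l):=-\tfrac12\langle\bm l,\Pi^{\mathrm{deg},\trop}\bm l\rangle-\langle\bm l,\ptCg^\trop\rangle\le\theta^\trop$. By \cref{thm:HuNorton:main1}, the component block of $Q(t)$ converges to the block-diagonal matrix of component period matrices, which is negative definite. A Schur complement argument therefore gives a constant $L>0$ and $\varepsilon>0$ such that $Q(t)-LD\le-2\varepsilon I$ for all $t\gg1$. Splitting $\log t=(\log t-L)+L$, the exponent of the $\bm m$-th term becomes
\[
(\log t-L)\,T(\bm l)-L\langle\bm l,\ptCg^\trop\rangle+\tfrac12\langle\bm m,(Q(t)-LD)\bm m\rangle+\langle\bm m,\bm b(t)\rangle .
\]
This is at most
\[
(\log t-L)\,\theta^\trop-\varepsilon|\bm m|^2+C|\bm m|
\]
for a constant $C$ independent of $t$. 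Summing over $\bm m\in\Z^g$ gives
\[
\theta(\ptCg(t)\mid\Pi^{\mathrm{deg}}(t))\le e^{\theta^\trop\log t+O(1)}\sum_{\bm m}e^{-\varepsilon|\bm m|^2+C|\bm m|}=e^{\theta^\trop\log t+O(1)} .
\]
Taking logarithms and combining with Step 2 proves the proposition. Apart from Step 1, the only delicate point is the uniform negative-definiteness of $Q(t)-LD$, which follows once the component block has a negative-definite limit and all blocks of $Q$ are bounded.
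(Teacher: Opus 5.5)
Your proposal is correct and follows the same route as the paper: you compare the theta series term by term with the tropical maximum, and reality of $\Pi^{\mathrm{deg}}(t)$ and of the argument makes every term positive, so no cancellation occurs. Your version is more complete than the paper's, which passes directly from the termwise asymptotics to $\log\theta=\max(\cdots)\log t+O(1)$ even though the $O(1)$ errors grow quadratically in $\bm m$. Your single-term lower bound, together with the uniform bound $Q(t)-LD\le-2\varepsilon I$ (from the negative-definite limit of the component block and a Schur complement), supplies exactly the tail control that step needs. Your direct check that the seam cycles are anti-invariant gives reality of $\Pi^{\mathrm{deg}}(t)$, which the paper instead obtains from \cref{prop:omega_and_delta_pos}\itemref{BDT2} via the integral change of basis $M_T$.
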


\begin{proof}
Write
\[
\ptZ
=
\begin{pmatrix}
\bm{k}\\
\bm{l}
\end{pmatrix}
\in
\Z^{g^\mathrm{comp}}\times\Z^{g^\trop}.
\]
By definition,
\begin{equation}\label{eq:theta_defn_trop_proof}
\theta
\left(
\ptCg(t)\mid\Pi^{\mathrm{deg}}(t)
\right)
=
\sum_{\ptZ\in\Z^g}
\exp\left(
\frac12
\<\ptZ,\Pi^{\mathrm{deg}}(t)\ptZ\>
+
\<\ptZ,\ptCg(t)\>
\right).
\end{equation}
Using~\eqref{eq:period_asympotics_ov} and~\eqref{eq:asymp_z_t}, we obtain
\begin{align*}
\frac12
\<\ptZ,\Pi^{\mathrm{deg}}(t)\ptZ\>
+
\<\ptZ,\ptCg(t)\>
&=
\left(
-\frac12
\<\bm{l},
\Pi^{\mathrm{deg},\trop}\bm{l}\>
-
\<\bm{l},\ptCg^\trop\>
\right)
\log t
+
O(1).
\end{align*}
Since $\vecptjac(t)$ and $\Pi^{\mathrm{deg}}(t)$ are real, all summands in~\eqref{eq:theta_defn_trop_proof} are positive.
In particular, there is no cancellation of leading terms. Therefore,
\begin{align*}
\log
\theta
\left(
\ptCg(t)\mid\Pi^{\mathrm{deg}}(t)
\right)
&=
\max_{\bm{l}\in\Z^{g^\trop}}
\left(
-\frac12
\<\bm{l},
\Pi^{\mathrm{deg},\trop}\bm{l}\>
-
\<\bm{l},\ptCg^\trop\>
\right)
\log t
+
O(1)
\\
&=
\theta^\trop
\left(
\ptCg^\trop\mid\Pi^{\mathrm{deg},\trop}
\right)
\log t
+
O(1).
\end{align*}
\end{proof}

\begin{figure}
\centering

\begin{tikzpicture}[
    x={(0.50cm,-0.29cm)},
    y={(-0.29cm,0.50cm)},
    line cap=round,
    line join=round,
    >=Stealth,
    every node/.style={font=\scriptsize}
]

\tikzset{
    thetadiv/.style={
        red!75!black,
        line width=1.15pt
    },
    abelimage/.style={
        black,
        line width=1.2pt
    },
    jacboundary/.style={
        black,
        line width=.9pt
    },
    intersection/.style={
        draw=brown,
        circle,
        fill=brown,
        minimum size=6pt,
        inner sep=0pt
    },
    thetavertex/.style={
        draw=red!75!black,
        circle,
        fill=red!75!black,
        minimum size=5pt,
        inner sep=0pt
    }
}


\coordinate (O)   at (-18,-18);
\coordinate (P1)  at (3,0);
\coordinate (P2)  at (0,3);
\coordinate (P12) at (21,21);

\fill[black!5]
    (O)--(P1)--(P12)--(P2)--cycle;

\draw[jacboundary]
    (O)--(P1)--(P12)--(P2)--cycle;


\coordinate (TL) at ({-12/7},1);
\coordinate (TB) at (1,{-12/7});

\coordinate (T1) at (1,1);
\coordinate (T2) at (19,19);

\coordinate (TT) at (19,{135/7});
\coordinate (TR) at ({135/7},19);

\draw[thetadiv] (TL)--(T1);
\draw[thetadiv] (TB)--(T1);
\draw[thetadiv] (T1)--(T2);
\draw[thetadiv] (T2)--(TT);
\draw[thetadiv] (T2)--(TR);


\node[thetavertex] at (T1) {};
\node[thetavertex] at (T2) {};

\node[
    text=red!75!black,
    above =3pt,
    font=\tiny
]
at (T1)
{$(1,1)$};

\node[
    text=red!75!black,
     left=3pt,
    font=\tiny
]
at (T2)
{$(19,19)$};


\node[
    text=red!75!black,
    left=3pt,
    font=\tiny
]
at (TL)
{$\left(-\frac{12}{7},1\right)$};

\node[
    text=red!75!black,
    below=3pt,
    font=\tiny
]
at (TB)
{$\left(1,-\frac{12}{7}\right)$};

\node[
    text=red!75!black,
    above=3pt,
    font=\tiny
]
at (TT)
{$\left(19,\frac{135}{7}\right)$};

\node[
    text=red!75!black,
    right=3pt,
    font=\tiny
]
at (TR)
{$\left(\frac{135}{7},19\right)$};


\coordinate (Vone)   at (0,0);
\coordinate (Vthree) at (-13,-13);
\coordinate (Vfour)  at (-5,-5);


\draw[abelimage]
    (O)
    -- node[midway,below right] {$e_3$}
    (Vthree);


\draw[abelimage]
    (Vthree)
    -- node[midway,below right] {$e_2$}
    (Vfour);


\draw[abelimage]
    (Vfour)
    -- node[midway,below right] {$e_1$}
    (Vone);


\draw[abelimage]
    (Vone)
    -- node[pos=.78,below left] {$e_4$}
    (P1);


\draw[abelimage]
    (Vone)
    -- node[pos=.78,right] {$e_5$}
    (P2);


\node[
    bvert,
    label={[font=\scriptsize]left:{$v_1=(0,0)$}}
]
at (Vone) {};

\node[
    bvert,
    label={[font=\scriptsize]left:{$v_3=(-13,-13)$}}
]
at (Vthree) {};

\node[
    bvert,
    label={[font=\scriptsize]left:{$v_4=(-5,-5)$}}
]
at (Vfour) {};


\node[
    bvert,
    label={[font=\scriptsize]below left:{$v_2=(-18,-18)$}}
]
at (O) {};

\node[
    bvert,
    label={[font=\scriptsize]below right:{$v_2=(3,0)$}}
]
at (P1) {};

\node[
    bvert,
    label={[font=\scriptsize]above left:{$v_2=(0,3)$}}
]
at (P2) {};


\node[intersection] at (1,0) {};
\node[intersection] at (0,1) {};

\node[
    right=5pt,
    text=brown,
    font=\scriptsize
]
at (1,0)
{$(1,0)$};

\node[
    left=5pt,
    text=brown,
    font=\scriptsize
]
at (0,1)
{$(0,1)$};

\end{tikzpicture}

\caption{
Tropical Riemann's theorem for the abstract tropical curve in
\cref{fig:embedded-abstract-dual-subdivision}(B).
We take $\baseptT=v_1$ and
$\vecptjac^\trop=(1,1)$.
The parallelogram is a translated fundamental domain for
$\jac(\tropmcurve)$ whose side vectors are the columns of
$\Pi^{\mathrm{deg},\trop}$. For clarity, the picture has been linearly rescaled. The black graph is the Abel image
$\mu_{v_1}^{\mathrm{deg},\trop}(\tropmcurve)$
and the red graph is the translated tropical theta divisor
$\Theta^\trop+\vecptjac^\trop-\vecRC^{\mathrm{deg},\trop}$.
Their intersection consists of the two brown points $(1,0)$ and $(0,1)$.
}
\label{fig:tropical-riemann-non-smooth}

\end{figure}

\begin{example}\label{ex:MZequalsours}

Let us verify for the abstract tropical curve in
\cref{fig:embedded-abstract-dual-subdivision}(B) that
$\vecRC^{\mathrm{deg},\trop}$ agrees with the tropical vector of
Riemann constants appearing in the following tropical version of
Riemann's theorem due to
Mikhalkin--Zharkov~\cite[Theorem~6.5]{MZ}, formulated as in \cref{remark:theta_divisor}.
Let
\[
\Theta^\trop
:=
\{
\ptR\in\jac(\tropmcurve)
\mid
\theta^\trop(\ptR\mid\Pi^{\mathrm{deg},\trop})
\text{ achieves its maximum at least twice}
\}
\]
be the tropical theta divisor. Then, there is a unique constant
$[\vecRC^{\mathrm{MZ}}] \in \jac(\tropmcurve)$ such that for any
$[\vecptjac^\trop]\in\jac(\tropmcurve)$,
\[
\Divtr
=
\sum_{v\in\vertices(\tropmcurve)}
g_v\cdot v
+
\mu^{\mathrm{deg},\trop}_{\baseptT}(\tropmcurve)
\cap_{\mathrm{stable}}
\left(
\Theta^\trop
+
[\vecptjac^\trop]
-
[\vecRC^{\mathrm{MZ}}]
\right)
\]
is a divisor such that
\[
\mu^{\mathrm{deg},\trop}_{\baseptT}(\Divtr)
=
[\vecptjac^\trop].
\]
Here, stable intersection means the limit of intersections with generic
perturbations. We check that
\[
[\vecRC^{\mathrm{deg},\trop}]
=
[\vecRC^{\mathrm{MZ}}].
\]

Since
\[
\Pi^{\mathrm{deg},\trop}
=
\begin{pmatrix}
21&18\\
18&21
\end{pmatrix},
\]
we represent the tropical Jacobian by drawing a translated fundamental
parallelogram whose sides are given by the columns of
$\Pi^{\mathrm{deg},\trop}$.
Take
$
\vecptjac^\trop=(1,1)
$
and recall from
\cref{ex:riemann_constants_non_smooth} that the fixed basepoint is
$\baseptT=v_1$ and
$
\vecRC^{\mathrm{deg},\trop}
=
\begin{pmatrix}
-{57}/{2}\\
-{57}/{2}
\end{pmatrix}.
$
The translate appearing in tropical Riemann's theorem is
\[
\Theta^\trop
+
\vecptjac^\trop
-
\vecRC^{\mathrm{deg},\trop},
\]
which is the red graph in
\cref{fig:tropical-riemann-non-smooth}.
The Abel image
$\mu_{v_1}^{\mathrm{deg},\trop}(\tropmcurve)$
is the black graph in
\cref{fig:tropical-riemann-non-smooth}, from which we get that the
intersection, which is transverse and thus agrees with the stable
intersection, is
\[
\mu_{v_1}^{\mathrm{deg},\trop}(\tropmcurve)
\cap
\left(
\Theta^\trop
+
\vecptjac^\trop
-
\vecRC^{\mathrm{deg},\trop}
\right)
=
\{
(1,0),(0,1)
\}
\]
Since all vertices have genus zero, $\Divtr$ is the preimage of these two points. Finally,
\[
\mu^{\mathrm{deg},\trop}_{v_1}(\Divtr)
=
(1,0)+(0,1)
=
(1,1)
=
\vecptjac^\trop.
\]
Therefore,
\[
[\vecRC^{\mathrm{deg},\trop}]
=
[\vecRC^{\mathrm{MZ}}].
\]

\end{example}

\section{Tropicalization of Fock's inverse spectral transform}
\label{sec:tropical_fock_weights}

\subsection{Tropical dimer model}

Let
\[
\wt^\trop:\edges(\Gtor)\rightarrow\Q
\]
be a \emph{tropical edge weight}.  Two tropical edge weights are
\emph{tropically gauge equivalent} if there is a function
\[
f:\blackvertices(\Gtor)\sqcup\whitevertices(\Gtor)\rightarrow\Q
\]
such that
\[
\wt_2^\trop(\bb\bw)
=
\wt_1^\trop(\bb\bw)-f(\bb)+f(\bw)
\qquad
\text{for every }\bb\bw\in\edges(\Gtor).
\]
We denote the tropical gauge-equivalence class of $\wt^\trop$ by
$[\wt^\trop]$.

If
$
\gamma
=
(\bb_1\xrightarrow{\be_1}\bw_1
\xrightarrow{\be_2}\bb_2
\xrightarrow{\be_3}\cdots
\xrightarrow{\be_{2k-1}}\bw_k
\xrightarrow{\be_{2k}}\bb_1)
$
is a cycle in $\Gtor$, define its \emph{tropical cycle weight} by
\[
[\wt^\trop](\gamma)
:=
\sum_{j=1}^k
\left(
\wt^\trop(\be_{2j-1})
-
\wt^\trop(\be_{2j})
\right).
\]
This depends only on the tropical gauge-equivalence class.

\subsection{The tropical inverse spectral transform}

We now define a tropical version of Fock's inverse spectral
transform. A \emph{tropical spectral datum associated to $\Gtor$} is a triple
$
(
\tropmcurve,\divisor^\trop,\nu^\trop
),
$
where:
\begin{enumerate}
\item
$(\tropmcurve,\ell,\bm g)$ is the abstract tropical curve
associated to an embedded tropical spectral curve
$\tropmcurve_{\mathrm{emb}}$ with Newton polygon $N$.

\item
$\divisor^\trop$ is a tropical effective
divisor of degree $g$.

\item
$\nu^\trop$ is the tropical angle map, assigning to each zig-zag path
$\alpha\in\zzG$ a vertex
$\alpha^\trop\in \vertices(\tropmcurve)$ such that $\alpha^\trop$ is the endpoint of an unbounded edge dual to the side $\Eside(\alpha)$ of $N$.
\end{enumerate}

Fix a spanning tree $T\subset\tropmcurve$ and a basepoint
$\baseptT\in\tropmcurve$, and use the corresponding
degeneration-adapted tropical period matrix
$\Pi^{\mathrm{deg},\trop}$, Abel map
$\mu_{\baseptT}^{\mathrm{deg},\trop}$, and vector of Riemann constants
$\vecRC^{\mathrm{deg},\trop}$. Extending $\nu^\trop$ linearly to divisors supported on $\zzG$ gives
the \emph{tropical discrete Abel map}
$
\dam^\trop.
$

For a wedge $\mathfrak w$ with associated
$\alpha,\beta_-,\beta_+$ and adjacent faces
$\bface_-,\bface_+$, define its \emph{tropical Fock wedge weight} by
\begin{equation}
\label{eq:tropical_fock_weight}
\begin{aligned}
\wt_{\mathrm{Fock}}^\trop(\mathfrak w)
:={}&
\theta^\trop\left(
\mu_{\baseptT}^{\mathrm{deg},\trop}
    (\dam^\trop(\bface_-))
-\mu_{\baseptT}^{\mathrm{deg},\trop}
    (\divisor^\trop)
+\vecRC^{\mathrm{deg},\trop}
\middle|
\Pi^{\mathrm{deg},\trop}
\right)
\\
&-
\theta^\trop\left(
\mu_{\baseptT}^{\mathrm{deg},\trop}
    (\dam^\trop(\bface_+))
-\mu_{\baseptT}^{\mathrm{deg},\trop}
    (\divisor^\trop)
+\vecRC^{\mathrm{deg},\trop}
\middle|
\Pi^{\mathrm{deg},\trop}
\right)
\\
&-
\int_{\baseptT}^{\alpha^\trop}
\omega_{\beta_+^\trop-\beta_-^\trop}^\trop.
\end{aligned}
\end{equation}
The integrals from $\baseptT$ to a point
of $\tropmcurve$ are taken along the unique path in the cut tropical $\tropmcurve_{\mathrm{cut}}$ joining the two
points.

If
$
\gamma
=
\mathfrak w_1+\cdots+\mathfrak w_r,
$
define the corresponding \emph{tropical Fock cycle weight} by
\begin{equation}
\label{eq:tropical_face_weight}
[\wt_{\mathrm{Fock}}^\trop](\gamma)
:=
\sum_{j=1}^r
\wt_{\mathrm{Fock}}^\trop(\mathfrak w_j).
\end{equation}

Thus, we have defined a map
\begin{align*}
 (\lambda^{-1}_{\mathrm{Fock}})^\trop:  \{\text{tropical spectral data associated to $\Gtor$}\}& \to \{\text{tropical edge weights on $\Gtor$}\}/\text{gauge}\\
((\tropmcurve,\ell,\bm g),\divisor^\trop,\nu^\trop)&\mapsto[\wt_{\mathrm{Fock}}^\trop]
\end{align*}

\subsection{Main theorem}

Let $[\wt](t)$ be $\Puispos$-valued edge weights modulo gauge
equivalence satisfying Assumption~\ref{ass:generic-leading-coefficients}, and let
\[
[\wt^\trop]:=\val([\wt](t))
\]
be its tropicalization. For $t\gg1$, apply the spectral transform:
\begin{equation}
\label{eq:spectral_transform_family}
\lambda([\wt](t))
=
(
\mcurve(t),\divisor(t),\nu(t)
).
\end{equation}

Recall that the family of spectral curves $\mcurve(t)$ degenerates to
a nodal curve $\mcurve(\infty)$, and that the corresponding embedded
tropical spectral curve $\tropmcurve_{\mathrm{emb}}$ determines an
abstract tropical curve
$
(\tropmcurve,\ell,\bm g).
$
Fix a spanning tree $T\subset\tropmcurve$ and a basepoint
$\basept(t)\in\mcurve(t)$ lying in the core of a component $\mcurve_v$,
with tropical limit $\baseptT=v$. The standard divisor
\[
\divisor(t)=\sum_{j=1}^g\divptgeom_j(t)
\]
tropicalizes to an embedded tropical divisor
\[
\divisor_{\mathrm{emb}}^\trop
:=
\sum_{j=1}^g \val(\divptgeom_j(t))
\]
on $\tropmcurve_{\mathrm{emb}}$. For each $j$, the component core or
plumbing collar containing $\divptgeom_j(t)$ determines the
corresponding vertex or edge of $\tropmcurve$. Thus, the degenerating
family $\divisor(t)$ determines a lift
\[
\divisor^\trop\in\sym^g(\tropmcurve)
\]
satisfying
$
\pi(\divisor^\trop)=\divisor_{\mathrm{emb}}^\trop.
$

For $\alpha\in\zzG$, write
$
\alpha(t):=\nu(t)(\alpha).
$
The tentacle of $\openspectralcurve(t)$ corresponding to
$\alpha(t)$ tropicalizes to the unbounded edge of
$\openspectralcurve^\trop_{\mathrm{emb}}$ contained in the line
\[
\langle[\alpha],\ptR\rangle+\Ctr_\alpha=0,
\]
where
\[
\Ctr_\alpha:=\val(\casimir_\alpha(t))
\]
is the \emph{tropical Casimir}. Let $\alpha^\trop$ be the vertex of
$\tropmcurve_{\mathrm{emb}}$ incident to this unbounded edge, which we
identify with the corresponding vertex of $\tropmcurve$. This defines
the \emph{tropical angle map}
\[
\nu^\trop:\zzG\rightarrow\vertices(\tropmcurve),
\qquad
\alpha\mapsto\alpha^\trop.
\]

We thus obtain a tropical spectral datum
$(
(\tropmcurve,\ell,\bm g),
\divisor^\trop,
\nu^\trop)
$ associated to the family
\eqref{eq:spectral_transform_family}.

\begin{theorem}
\label{thm:fock_weight_tropicalization}
Let $[\wt](t)$ be $\Puispos$-valued edge weights modulo gauge
equivalence satisfying Assumption~\ref{ass:generic-leading-coefficients}, and let
$
\lambda([\wt](t))
=
(
\mcurve(t),\divisor(t),\nu(t)
).
$
Let
$
(
(\tropmcurve,\ell,\bm g),
\divisor^\trop,
\nu^\trop
)
$
be the corresponding tropical spectral datum. Then,
\[
(\lambda^{-1}_{\mathrm{Fock}})^\trop
(
(\tropmcurve,\ell,\bm g),
\divisor^\trop,
\nu^\trop
)
=
[\wt^\trop].
\]
In other words, tropicalization commutes with Fock's inverse spectral
transform.
\end{theorem}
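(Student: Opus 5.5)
Since gauge classes are determined by cycle weights, it suffices to show that for every cycle $\gamma=\mathfrak w_1+\cdots+\mathfrak w_r$ in $\Gtor$ we have $\val([\wt](t)(\gamma))=[\wt^\trop_{\mathrm{Fock}}](\gamma)$. By \cref{thm:fock_inverse}, $[\wt](t)(\gamma)=[\kappa](\gamma)\prod_j\wt(\mathfrak w_j)$, where the Fock wedge weights are computed from $(\mcurve(t),\divisor(t),\nu(t))$. The plan is to compute $\log|\wt(\mathfrak w)|$ for each wedge up to $O(1)$ and identify the coefficient of $\log t$ with $\wt^\trop_{\mathrm{Fock}}(\mathfrak w)$. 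The sign $[\kappa](\gamma)$ does not affect the valuation. Because $[\wt](t)(\gamma)\in\Puispos$, its valuation equals $\lim_{t\to\infty}\log|[\wt](t)(\gamma)|/\log t$.

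The first step is a change of symplectic basis. Fock's formula is stated in the oval-adapted basis, but \cref{cor:periodtropical}, \cref{cor:abelmap}, \cref{prop:trop_differential_third_kind}, \cref{prop:riemann_constants_trop} and \cref{lem:theta_tropicalization} are all stated in the degeneration-adapted basis. I would argue that the cycle weight is independent of the choice of symplectic basis: $(\bm A^{\mathrm{deg}},\bm B^{\mathrm{deg}})$ is obtained from $(\bm A^{\mathrm{ov}},\bm B^{\mathrm{ov}})$ by an integral symplectic transformation that preserves the real structure. The transformation law of theta functions then changes each ratio by an exponential factor, and these factors cancel telescopically in the product over wedges around a closed cycle, using $\sum\dam$-differences along $\gamma$. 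Similarly, changing the normalization of $\omega_{\beta_+-\beta_-}$ adds holomorphic differentials whose contributions combine with the theta factors. I expect this bookkeeping to be the main obstacle. The cleanest route is to observe that Fock's construction, and hence the answer $[\wt](t)$, is basis independent; \cref{thm:fock_inverse} holds in any basis once $\vecRC$ and the normalizations are adapted. So I would simply run Fock's formula in the degeneration-adapted basis. One must still check that in this basis the theta arguments remain real up to the constant $\pi\imunit\onebf$ shift, so that positivity persists. I would verify this by noting that $\bm A^{\mathrm{seam}}$ and $\bm A^{\mathrm{comp}}$ are anti-invariant under conjugation and $\bm B^{\mathrm{deg}}$ are sums of ovals, so that the analogue of \cref{prop:omega_and_delta_pos} holds verbatim.

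The second step is the theta terms. Choose degeneration-adapted paths from $\basept(t)$ to each point $\alpha(t)$, lifting the paths in $\tropmcurve_{\mathrm{cut}}$, and to each $\divptgeom_j(t)$. The points at infinity lie in the cores of $\alpha^\trop$, so they are reached by core paths. The divisor points lie in cores or collars, and the endpoint condition \eqref{eq:in_pt_trop} holds because they are $\Puis$-valued. By \cref{cor:abelmap} and \cref{prop:riemann_constants_trop}, the vector $\ptCg(t)=\mu(\dam(\bface_\pm))-\mu(\divisor)+\vecRC$, after removing the $\pi\imunit\onebf$ shift (which turns the sum into one with terms of fixed sign pattern, handled by absorbing it), has exactly the form \eqref{eq:asymp_z_t}. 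Its tropical vector $\ptCg^\trop$ is the theta argument in \eqref{eq:tropical_fock_weight}, up to the sign convention $-\log t$. For the $\pi\imunit\onebf$ shift, I would use that $\theta(\ptCg+\pi\imunit\onebf)$ is a signed sum. The real positivity argument instead goes through the fact that standard divisors give real theta values without sign cancellation, as in \cite{BDdT}; equivalently, one can absorb the shift into a half-period characteristic, after which all terms are positive. \cref{lem:theta_tropicalization} then gives $\log|\theta|=\theta^\trop(\cdot)\log t+O(1)$.

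The third step is the third-kind term. Apply \cref{prop:trop_differential_third_kind} with $\beta_\pm(t)$ in the cores of $\beta_\pm^\trop$ and $\gamma(t)$ a lift of the $\tropmcurve_{\mathrm{cut}}$ path from $\baseptT$ to $\alpha^\trop$, perturbed to avoid the poles. This gives $\mathrm{Re}\int\omega_{\beta_+-\beta_-}=-(\int^{\alpha^\trop}_{\baseptT}\omega^\trop_{\beta_+^\trop-\beta_-^\trop})\log t+O(1)$. Summing the three contributions over the wedges of $\gamma$ yields $\log|[\wt](t)(\gamma)|=[\wt^\trop_{\mathrm{Fock}}](\gamma)\log t+O(1)$. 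This equals $\val([\wt](t)(\gamma))\log t+O(1)=[\wt^\trop](\gamma)\log t+O(1)$, which proves the theorem.
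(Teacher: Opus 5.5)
Your outline has the same structure as the paper's proof: reduce to cycle weights via \cref{thm:fock_inverse}, pass to the degeneration-adapted basis, and combine \cref{cor:abelmap}, \cref{prop:riemann_constants_trop}, \cref{lem:theta_tropicalization} and \cref{prop:trop_differential_third_kind} wedge by wedge. The gap is in the one step that genuinely needs the Harnack property: positivity of every term of the theta series.

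You treat the theta argument $\mu(\dam(\bface_\pm))-\mu(\divisor)+\vecRC$ as real up to a residual shift $\pi\imunit\onebf$. You then propose to absorb this shift into a half-period characteristic ``after which all terms are positive''. That is false. One has $\theta(\ptCg+\pi\imunit\onebf\mid\Pi)=\sum_{\ptZ}(-1)^{\langle\ptZ,\onebf\rangle}\exp\bigl(\tfrac12\langle\ptZ,\Pi\ptZ\rangle+\langle\ptZ,\ptCg\rangle\bigr)$, and rewriting it with characteristics does not remove the signs. With such signs, two maximizers of opposite parity in $\theta^\trop$ could cancel at leading order. For instance, $m=0$ and $m=1$ both attain the maximum at $x=-4$ in $\theta^\trop(x\mid 8)$. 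You would then only obtain the upper bound $\log|\theta|\le\theta^\trop\log t+O(1)$, and \cref{lem:theta_tropicalization}, which requires a real argument, would not apply. The appeal to \cite{BDdT} is too vague to repair this.

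The missing observation is that there is no residual shift at all:
\begin{itemize}
\item by \cref{prop:jacpos}, a standard divisor satisfies $\mu^{\mathrm{ov}}_{\basept}(\divisor)\in\pi\imunit\onebf+\R^g$;
\item by part (3) of \cref{prop:omega_and_delta_pos}, $\vecRC^{\mathrm{ov}}\in\pi\imunit\onebf+\R^g$;
\item these two enter the theta argument with opposite signs, so the shifts cancel;
\item by part (1), $\mu^{\mathrm{ov}}_{\basept}(\dam(\bface))\in\R^g$, since $\basept$ and all points at infinity lie on the outer oval.
\end{itemize}
So the argument lies in $\R^g$ exactly. So does its image under $M_T\in\operatorname{GL}_g(\Z)$ in the degeneration-adapted basis, and every term of the series is positive.

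Your basis-change step is also looser than it needs to be. The claim that \cref{thm:fock_inverse} ``holds in any basis'' is asserted, not proved. The claim that \cref{prop:omega_and_delta_pos} holds verbatim in the degeneration-adapted basis is not right: by the transformation rules, both $\mu^{\mathrm{deg}}(\divisor)$ and $\vecRC^{\mathrm{deg}}$ lie in $\pi\imunit M_T\onebf+\R^g$, and only their difference is real.

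The paper avoids this bookkeeping. Since $\bm B^{\mathrm{deg}}(t)=M_T\bm B^{\mathrm{ov}}(t)$, one may take $\bm A^{\mathrm{ov}}(t)=M_T^\top\bm A^{\mathrm{deg}}(t)$, which makes the change of basis block-diagonal. Then:
\begin{itemize}
\item $\theta$ is invariant under reindexing $\Z^g$;
\item the Abel map, period matrix and Riemann constants transform linearly;
\item the $A$-normalized third-kind differential is unchanged.
\end{itemize}
So no modular factors or telescoping arise, and reality is transported from the oval-adapted basis rather than re-proved.
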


\begin{proof}

We first rewrite Fock's inverse spectral transform in
degeneration-adapted coordinates.  Recall from \cref{sec:degen_adapted_basis} that
\[
\bm B^{\mathrm{deg}}(t)
=
M_T\bm B^{\mathrm{ov}}(t), \qquad M_T \in \operatorname{GL}_{g}(\Z).
\]
We may choose the oval-adapted $A$-cycles to be $\bm A^{\mathrm{ov}}(t)
=
M_T^\top\bm A^{\mathrm{deg}}(t)$. 

Standard change-of-basis formulas (see, for example, \cite{Fay}) give that the corresponding normalized holomorphic differentials and period
matrices satisfy
\[
\bm\omega^{\mathrm{ov}}(t)
=
M_T^{-1}\bm\omega^{\mathrm{deg}}(t),
\qquad
\Pi^{\mathrm{ov}}(t)
=
M_T^{-1}
\Pi^{\mathrm{deg}}(t)
(M_T^{-1})^{\top},
\]
and that
\[
\mu_{\basept(t)}^{\mathrm{ov}}
=
M_T^{-1}
\mu_{\basept(t)}^{\mathrm{deg}},
\qquad
\vecptjac^{\mathrm{ov}}(t)
=
M_T^{-1}
\vecptjac^{\mathrm{deg}}(t),
\qquad
\vecRC^{\mathrm{ov}}(t)
=
M_T^{-1}
\vecRC^{\mathrm{deg}}(t).
\]
Since $M_T\in\operatorname{GL}_g(\Z)$, reindexing the series expansion
of the theta function gives
\[
\theta
\left(
M_T^{-1}\ptCg
\middle|
M_T^{-1}\Pi^{\mathrm{deg}}(M_T^{-1})^\top
\right)
=
\theta
\left(
\ptCg
\middle|
\Pi^{\mathrm{deg}}
\right).
\]
Moreover, the normalized differential of the third kind is unchanged:
vanishing of its periods along $\bm A^{\mathrm{ov}}(t)$ is equivalent to vanishing of its periods along
$\bm A^{\mathrm{deg}}(t)$.

Let $\mathfrak w$ be a wedge with associated zig-zag paths
$\alpha,\beta_-,\beta_+$ and adjacent faces
$\bface_-,\bface_+$, as in \cref{sec:fock_weights}.  Fock's wedge
weight therefore takes the same form in degeneration-adapted
coordinates:
\begin{equation}
\label{eq:fock_wedge_deg}
\begin{aligned}
\wt_{\mathrm{Fock}}(t)(\mathfrak w)
={}&
-\frac{
\theta\left(
\mu_{\basept(t)}^{\mathrm{deg}}(\dam(\bface_-))
-\mu^{\mathrm{deg}}_{\basept(t)}(\divisor(t))
+\vecRC^{\mathrm{deg}}(t)
\middle|
\Pi^{\mathrm{deg}}(t)
\right)
}{
\theta\left(
\mu_{\basept(t)}^{\mathrm{deg}}(\dam(\bface_+))
-\mu^{\mathrm{deg}}_{\basept(t)}(\divisor(t))
+\vecRC^{\mathrm{deg}}(t)
\middle|
\Pi^{\mathrm{deg}}(t)
\right)
}
\\
&\qquad\times
\exp\left(
\int_{\basept(t)}^{\alpha(t)}
\omega^{\mathrm{deg}}_{\beta_+(t)-\beta_-(t)}(t)
\right).
\end{aligned}
\end{equation}
The integrals from $\basept(t)$ to any point $\ptSig(t)$ are taken along degeneration-adapted lifts of the unique path in $\tropmcurve_{\mathrm{cut}}$ from $\baseptT$ to $\ptSig^\trop$; in particular, these paths tropicalize to the corresponding paths in \eqref{eq:tropical_fock_weight}.

By \cref{cor:abelmap}, 
\begin{align*}
\mu_{\basept(t)}^{\mathrm{deg}}(\divisor(t))
&=
-
\begin{pmatrix}
0\\
\mu_{\baseptT}^{\mathrm{deg},\trop}(\divisor^\trop)
\end{pmatrix}
\log t
+
O(1),\\
\mu_{\basept(t)}^{\mathrm{deg}}(\dam(\bface))
&=
-
\begin{pmatrix}
0\\
\mu_{\baseptT}^{\mathrm{deg},\trop}
    (\dam^\trop(\bface))
\end{pmatrix}
\log t
+
O(1).
\end{align*}
Together with \cref{prop:riemann_constants_trop}, we obtain
\begin{equation}
\label{eq:fock_theta_argument_asymptotic}
\begin{aligned}
&\mu_{\basept(t)}^{\mathrm{deg}}(\dam(\bface))
-\mu_{\basept(t)}^{\mathrm{deg}}(\divisor(t))
+\vecRC^{\mathrm{deg}}(t)
\\
&\qquad=
-
\begin{pmatrix}
0\\
\mu_{\baseptT}^{\mathrm{deg},\trop}
    (\dam^\trop(\bface))
-\mu_{\baseptT}^{\mathrm{deg},\trop}
    (\divisor^\trop)
+\vecRC^{\mathrm{deg},\trop}
\end{pmatrix}
\log t
+
O(1).
\end{aligned}
\end{equation}

By \cref{prop:omega_and_delta_pos},
\[
\mu_{\basept(t)}^{\mathrm{ov}}(\dam(\bface))
\in\R^g,
\qquad
\mu_{\basept(t)}^{\mathrm{ov}}(\divisor(t))
\in
\pi\imunit\onebf+\R^g,
\qquad
\vecRC^{\mathrm{ov}}(t)
\in
\pi\imunit\onebf+\R^g,
\]and therefore,
\[
\mu_{\basept(t)}^{\mathrm{ov}}(\dam(\bface))
-
\mu_{\basept(t)}^{\mathrm{ov}}(\divisor(t))
+
\vecRC^{\mathrm{ov}}(t)
\in\R^g.
\]
Since the corresponding degeneration-adapted theta argument is obtained
by multiplication by $M_T\in\operatorname{GL}_g(\Z)$, it also lies in
$\R^g$. Therefore, \cref{lem:theta_tropicalization} and
\eqref{eq:fock_theta_argument_asymptotic} give
\begin{equation}
\label{eq:theta_fock_asymptotic}
\begin{aligned}
&\log\theta\left(
\mu_{\basept(t)}^{\mathrm{deg}}(\dam(\bface))
-\mu_{\basept(t)}^{\mathrm{deg}}(\divisor(t))
+\vecRC^{\mathrm{deg}}(t)
\middle|
\Pi^{\mathrm{deg}}(t)
\right)
\\
&\qquad=
\theta^\trop\left(
\mu_{\baseptT}^{\mathrm{deg},\trop}
    (\dam^\trop(\bface))
-\mu_{\baseptT}^{\mathrm{deg},\trop}
    (\divisor^\trop)
+\vecRC^{\mathrm{deg},\trop}
\middle|
\Pi^{\mathrm{deg},\trop}
\right)
\log t
+
O(1).
\end{aligned}
\end{equation}

On the other hand, \cref{prop:trop_differential_third_kind} gives
\begin{equation}
\label{eq:third_kind_fock_asymptotic}
\int_{\basept(t)}^{\alpha(t)}
\omega_{\beta_+(t)-\beta_-(t)}^{\mathrm{deg}}(t)
=
-
\left(
\int_{\baseptT}^{\alpha^\trop}
\omega_{\beta_+^\trop-\beta_-^\trop}^\trop
\right)
\log t
+
O(1),
\end{equation}
where the tropical integral is taken along the unique path in $\tropmcurve_{\mathrm{cut}}$ from
$\baseptT$ to $\alpha^\trop$.

Taking the logarithm of the absolute value of
\eqref{eq:fock_wedge_deg} and substituting
\eqref{eq:theta_fock_asymptotic} and
\eqref{eq:third_kind_fock_asymptotic}, we obtain
\begin{equation}
\label{eq:wedge_asymp}
\log
\left|
\wt_{\mathrm{Fock}}(t)(\mathfrak w)
\right|
=
\wt_{\mathrm{Fock}}^\trop(\mathfrak w)\log t
+
O(1).
\end{equation}

Now let
$
\gamma
=
\mathfrak w_1+\cdots+\mathfrak w_r
$
be a cycle in $\Gtor$. By \cref{thm:spectral_transform_birational},
\[
[\wt](t)(\gamma)
=
[\wt_{\mathrm{Fock}}](t)(\gamma)
=
[\kappa](\gamma)
\prod_{j=1}^r
\wt_{\mathrm{Fock}}(t)(\mathfrak w_j).
\]
Since $|[\kappa](\gamma)|=1$, taking logarithms of absolute values and
using \eqref{eq:wedge_asymp} gives
\[
\log|[\wt](t)(\gamma)|
=
\left(
\sum_{j=1}^r
\wt_{\mathrm{Fock}}^\trop(\mathfrak w_j)
\right)
\log t
+
O(1).
\]
Since the weights are positive for $t \gg 1$, the aboslute values may be dropped, and therefore
\[
[\wt^\trop]
=
[\wt_{\mathrm{Fock}}^\trop],
\]
which proves the theorem.
\end{proof}

\begin{example}
We verify \cref{thm:fock_weight_tropicalization} for our running example. Recall the positive Puiseux weights
\[
X_1(t)=t, \qquad X_2(t)=t,\qquad
X_3(t)=1, \qquad \rho_z(t)=t,
\qquad\rho_w(t)=1
\] 
from \cref{ex:running_graph_tropical_spectral_curve} with tropical weights 
\[
X_1^\trop=1, \qquad X_2^\trop=1,
\qquad
X_3^\trop=0,
\qquad
\rho_z^\trop=1,
\qquad
\rho_w^\trop=0.
\]
The tropical spectral curve is shown in \cref{fig:running-example-tropical-spectral-curve}. From \cref{ex:running_graph_spectral_transform_genus_one}, the standard divisor is
\[
\divisor(t)
=
\left(
-\frac{X_1X_2X_3}{\rho_z},
\frac{1+X_1X_2X_3}{X_1\rho_w}
\right)
=
(-t,t+t^{-1}),
\]
which tropicalizes to the point $D^\trop=(1,1)$. Next, we compute the tropical angle map. Again from
\cref{ex:running_graph_spectral_transform_genus_one}, we have
\[
C_{\alpha_1}=\rho_z=t,
\qquad
C_{\alpha_2}
=
\frac{\rho_z}{X_1X_2}
=
t^{-1}.
\]
Since
$
[\alpha_1]=[\alpha_2]=(-1,0),
$
the lines containing the unbounded edges are
\[
x+C_{\alpha_i}^\trop=0.
\]
Since 
$
C_{\alpha_1}^\trop=1,
C_{\alpha_2}^\trop=-1,
$
we get
\[
\alpha_1^\trop=(-1,0),
\qquad
\alpha_2^\trop=(1,0).
\]
Thus, the tropical angle map is
\[
\alpha_1^\trop=(-1,0),
\qquad
\alpha_2^\trop=(1,0),
\qquad
\beta^\trop=(-2,1),
\qquad
\gamma^\trop=(2,1).
\]

We choose as spanning tree $T$ the complement of the edge $\beta^\trop \gamma^\trop$, and take the basepoint to be
$
\baseptT=\beta^\trop.
$
The degeneration-adapted tropical $B$-cycle is the bounded face, and so the period matrix is
\[
\Pi^{\mathrm{deg},\trop}
=
4+1+2+1
=
8,
\]
and the tropical theta function is
\[
\theta^\trop(x\mid 8)
=
\max_{m\in\Z}
\left\{
-4m^2-mx
\right\}.
\]
The values needed below are
\[
\theta^\trop(-3\mid8)=0,
\qquad
\theta^\trop(-2\mid8)=0,
\qquad
\theta^\trop(2\mid8)=0,
\qquad
\theta^\trop(5\mid8)=1;
\]
see \cref{fig:tropical-theta-example}.

\begin{figure}
\centering

\begin{tikzpicture}[
    x=0.55cm,
    y=0.45cm,
    line cap=round,
    line join=round,
    >=Stealth
]


\draw[->,line width=0.8pt]
    (-7.3,0) -- (7.3,0)
    node[right] {$x$};

\draw[->,line width=0.8pt]
    (0,-0.8) -- (0,3.7)
    node[above] {$y=\theta^\trop(x\mid 8)$};

%

\draw[curveblue,line width=2pt]
    (-7,3)
    -- (-4,0)
    -- (4,0)
    -- (7,3);






\node[circle,fill=black,inner sep=1.7pt]
    at (-3,0) {};

\node[circle,fill=black,inner sep=1.7pt]
    at (-2,0) {};

\node[circle,fill=black,inner sep=1.7pt]
    at (2,0) {};

\node[circle,fill=black,inner sep=1.7pt]
    at (5,1) {};

\node[above] at (-3,0)
    {$(-3,0)$};

\node[below] at (-2,0)
    {$(-2,0)$};

\node[below] at (2,0)
    {$(2,0)$};

\node[above=3pt] at (5,1)
    {$(5,1)$};

\end{tikzpicture}

\caption{
The tropical theta function
$
\theta^\trop(x\mid 8)
=
\max_{m\in\Z}\{-4m^2-mx\}.
$
}
\label{fig:tropical-theta-example}

\end{figure}

We have
$
\mu^{\mathrm{deg},\trop}_\baseptT(\divisor^\trop)=-5,
$
and by \cref{prop:riemann_constants_trop},
\[
\vecRC^{\mathrm{deg},\trop}
=
-\frac12\Pi^{\mathrm{deg},\trop}
=
-4.
\]

The boundary of $\bface_1$ decomposes into three wedges
$
\partial\bface_1
=
\mathfrak w_1+\mathfrak w_2+\mathfrak w_3,
$
whose associated zig-zag paths are
\[
\begin{array}{c|ccc}
&\alpha&\beta_-&\beta_+\\ \hline
\mathfrak w_1&\gamma&\beta&\alpha_1\\
\mathfrak w_2&\beta&\alpha_1&\gamma\\
\mathfrak w_3&\alpha_2&\gamma&\beta
\end{array}
\]
We next compute the tropical discrete Abel map for the faces appearing
around $\bface_1$. Recall that we normalize by
$\dam^\trop(\bw_1)=0$. We have
\(
\dam^\trop(\bface_1)=\beta^\trop,
\)
and hence
\(
\mu_{\baseptT}^{\mathrm{deg},\trop}
(\dam^\trop(\bface_1))
=
0.
\)

For the wedges, the relevant values of the tropical discrete Abel map are as
follows:
\[
\begin{array}{c|c|c|c|c}
\text{wedge}
&
\bface_-
&
\dam^\trop(\bface_-)
&
\bface_+
&
\dam^\trop(\bface_+)
\\ \hline
\mathfrak w_1
&
\bface_2 + (-1,0)
&
\gamma^\trop
&
\bface_3 + (0,-1)
&
\beta^\trop+\gamma^\trop-\alpha_1^\trop
\\[2pt]
\mathfrak w_2
&
\bface_4+(-1,0)
&
2\beta^\trop-\alpha_1^\trop
&
\bface_2 
&
2\beta^\trop-\gamma^\trop
\\[2pt]
\mathfrak w_3
&
\bface_4
&
\beta^\trop+\alpha_2^\trop-\gamma^\trop
&
\bface_3
&
\alpha_2^\trop
\end{array}
\]
The pairs of integers denotes which $\Z^2$-translate of the face in the fundamental domain is being considered. Applying the tropical Abel map gives
\[
\mu_{\baseptT}^{\mathrm{deg},\trop}(\alpha_1^\trop) = -1, \qquad \mu_{\baseptT}^{\mathrm{deg},\trop}(\alpha_2^\trop) = -3, \qquad \mu_{\baseptT}^{\mathrm{deg},\trop}(\beta^\trop) = 0, \qquad \mu_{\baseptT}^{\mathrm{deg},\trop}(\gamma^\trop) = -4,
\]
and therefore,
\[
\begin{array}{c|c|c}
\text{wedge}
&
\mu_{\baseptT}^{\mathrm{deg},\trop}
  (\dam^\trop(\bface_-))
&
\mu_{\baseptT}^{\mathrm{deg},\trop}
  (\dam^\trop(\bface_+))
\\ \hline
\mathfrak w_1 & -4 & -3\\
\mathfrak w_2 &  1 &  4\\
\mathfrak w_3 &  1 & -3
\end{array}
\]
We now compute the three wedge weights. For $\mathfrak w_1$, the adjacent faces are
\(
\bface_-=\bface_2,
\bface_+=\bface_3,
\)
and the associated zig-zag paths are
\(
(\alpha,\beta_-,\beta_+)
=
(\gamma,\beta,\alpha_1).
\)
Hence, the arguments of the theta function are
\[
\begin{aligned}
\mu_{\baseptT}^{\mathrm{deg},\trop}
(\dam^\trop(\bface_2))
-\mu^{\mathrm{deg},\trop}_\baseptT(\divisor^\trop)
+\vecRC^{\mathrm{deg},\trop}
&=
-4-(-5)-4
=
-3,
\\
\mu_{\baseptT}^{\mathrm{deg},\trop}
(\dam^\trop(\bface_3))
-\mu^{\mathrm{deg},\trop}_\baseptT(\divisor^\trop)
+\vecRC^{\mathrm{deg},\trop}
&=
-3-(-5)-4
=
-2.
\end{aligned}
\]
The differential
$
\omega_{\alpha_1^\trop-\beta^\trop}^\trop
$
is the unit flow along the path
$
(\beta^\trop
\rightarrow
\alpha_1^\trop)
$
and the integral $\int_{\baseptT}^{\gamma^\trop}\omega_{\alpha_1^\trop-\beta^\trop}^\trop$ is along the path 
\(
(\beta^\trop
\rightarrow
\alpha_1^\trop
\rightarrow
\alpha_2^\trop
\rightarrow
\gamma^\trop),
\)
so
\[
\int_\baseptT^{\gamma^\trop}
\omega_{\alpha_1^\trop-\beta^\trop}^\trop
=
1.
\]
Therefore,
\[
\begin{aligned}
\wt_{\mathrm{Fock}}^\trop(\mathfrak w_1)
&=
\theta^\trop(-3\mid8)
-
\theta^\trop(-2\mid8)
-
\int_\baseptT^{\gamma^\trop}
\omega_{\alpha_1^\trop-\beta^\trop}^\trop
\\
&=
0-0-1
\\
&=
-1.
\end{aligned}
\]
For $\mathfrak w_2$, the adjacent faces are
\(
\bface_-=\bface_4,
\bface_+=\bface_2,
\)
and the associated zig-zag paths are
\(
(\alpha,\beta_-,\beta_+)
=
(\beta,\alpha_1,\gamma).
\)
Hence, the arguments of the theta function are
\[
\begin{aligned}
\mu_{\baseptT}^{\mathrm{deg},\trop}
(\dam^\trop(\bface_4))
-\mu^{\mathrm{deg},\trop}_\baseptT(\divisor^\trop)
+\vecRC^{\mathrm{deg},\trop}
&=
1-(-5)-4
=
2,
\\
\mu_{\baseptT}^{\mathrm{deg},\trop}
(\dam^\trop(\bface_2))
-\mu^{\mathrm{deg},\trop}_\baseptT(\divisor^\trop)
+\vecRC^{\mathrm{deg},\trop}
&=
4-(-5)-4
=
5.
\end{aligned}
\]
Since $\alpha^\trop=\beta^\trop=\baseptT$, the integration path is constant, and
therefore
\[
\int_{\baseptT}^{\beta^\trop}
\omega_{\gamma^\trop-\alpha_1^\trop}^\trop
=
0.
\]
Thus,
\[
\begin{aligned}
\wt_{\mathrm{Fock}}^\trop(\mathfrak w_2)
&=
\theta^\trop(2\mid8)
-
\theta^\trop(5\mid8)
-
\int_{\baseptT}^{\beta^\trop}
\omega_{\gamma^\trop-\alpha_1^\trop}^\trop
\\
&=
0-1-0
\\
&=
-1.
\end{aligned}
\]

For $\mathfrak w_3$, the adjacent faces are
\(
\bface_-=\bface_4,
\bface_+=\bface_3,
\)
and the associated zig-zag paths are
\(
(\alpha,\beta_-,\beta_+)
=
(\alpha_2,\gamma,\beta).
\)
Hence, the arguments of the theta function are
\[
\begin{aligned}
\mu_{\baseptT}^{\mathrm{deg},\trop}
(\dam^\trop(\bface_4))
-\mu^{\mathrm{deg},\trop}_\baseptT(\divisor^\trop)
+\vecRC^{\mathrm{deg},\trop}
&=
1-(-5)-4
=
2,
\\
\mu_{\baseptT}^{\mathrm{deg},\trop}
(\dam^\trop(\bface_3))
-\mu^{\mathrm{deg},\trop}_\baseptT(\divisor^\trop)
+\vecRC^{\mathrm{deg},\trop}
&=
-3-(-5)-4
=
-2.
\end{aligned}
\]
The differential
\(
\omega_{\beta^\trop-\gamma^\trop}^\trop
\)
is the unit flow along the path
\(
(\gamma^\trop
\rightarrow
\alpha_2^\trop
\rightarrow
\alpha_1^\trop
\rightarrow
\beta^\trop),
\)
while the integration path from $\baseptT=\beta^\trop$ to
$\alpha_2^\trop$ is the path
\(
(\beta^\trop
\rightarrow
\alpha_1^\trop
\rightarrow
\alpha_2^\trop).
\)
Hence,
\[
\int_{\baseptT}^{\alpha_2^\trop}
\omega_{\beta^\trop-\gamma^\trop}^\trop
=
-3.
\]
Therefore,
\[
\begin{aligned}
\wt_{\mathrm{Fock}}^\trop(\mathfrak w_3)
&=
\theta^\trop(2\mid8)
-
\theta^\trop(-2\mid8)
-
\int_{\baseptT}^{\alpha_2^\trop}
\omega_{\beta^\trop-\gamma^\trop}^\trop
\\
&=
0-0-(-3)
\\
&=
3.
\end{aligned}
\]
The sum of the wedge weights is
\[
[\wt_{\mathrm{Fock}}^\trop](\partial\bface_1)
=
-1-1+3
=
1
=
X_1^\trop.
\]
Thus, the tropical Fock formula recovers the original tropical face
weight around $\bface_1$.
\end{example}

\section{A tropical Fay's trisecant identity}
\label{sec:tropical_fay}

In this section, we apply our asymptotic results to prove a tropical
version of Fay's trisecant identity. Tropical versions of Fay's identity
were first obtained by Inoue--Takenawa~\cite{InoueTakenawaFay} for hyperelliptic tropical curves and
subsequently generalized by Inoue--Iwao~\cite{InoueIwao2011} to general smooth
planar tropical curves. We first rewrite the classical identity in a
form involving normalized differentials of the third kind.

\begin{proposition}[{Fay's trisecant identity~\cite[p.~34, Eq.~(45)]{Fay}}]
\label{prop:fay_third_kind}
Let
$
\ptSig_1,\ptSig_2,\ptSig_3,\ptSig_4
$
be four distinct points of a compact Riemann surface $\mcurve$, and choose
lifts of these points to the universal cover. Let $\vecptjac \in\C^g$ and let
\[
\begin{aligned}
F_1&:=
\theta(\vecptjac+\bm \eta_{13}\mid\Pi)
\theta(\vecptjac+\bm \eta_{24}\mid\Pi)
\exp\left(
\int_{\ptSig_1}^{\ptSig_3}
\omega_{\ptSig_2-\ptSig_4}
\right)
\\
F_2&:=
\theta(\vecptjac+\bm \eta_{23}\mid\Pi)
\theta(\vecptjac+\bm \eta_{14}\mid\Pi)
\exp\left(
\int_{\ptSig_2}^{\ptSig_3}
\omega_{\ptSig_1-\ptSig_4}
\right)
\\
F_3&:=
\theta(\vecptjac\mid\Pi)
\theta(
\vecptjac+\bm \eta_{13}+\bm \eta_{24}
\mid\Pi),
\end{aligned}
\]
where $\bm \eta_{ij}
:=
\mu_{\basept}(\ptSig_j-\ptSig_i).$ Then, \[F_3=F_1+F_2.\]
\end{proposition}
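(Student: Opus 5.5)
This statement is Fay's classical identity rewritten so that its correction factor is an integral of a normalized differential of the third kind. I would derive it from Fay's original form, which is written in prime forms, by expressing the prime-form cross-ratio through such an integral.

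Fay's identity \cite[Eq.~(45)]{Fay} reads
\[
\theta(\vecptjac)\,\theta(\vecptjac+\bm\eta_{13}+\bm\eta_{24})\,E(\ptSig_3,\ptSig_2)E(\ptSig_1,\ptSig_4)
=\theta(\vecptjac+\bm\eta_{13})\theta(\vecptjac+\bm\eta_{24})E(\ptSig_3,\ptSig_4)E(\ptSig_1,\ptSig_2)
+\theta(\vecptjac+\bm\eta_{23})\theta(\vecptjac+\bm\eta_{14})E(\ptSig_3,\ptSig_1)E(\ptSig_2,\ptSig_4),
\]
possibly up to an overall sign convention that I would check against the source. Here $E$ is the prime form, and all lifts to the universal cover are the ones fixed in the statement. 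Dividing both sides by $E(\ptSig_3,\ptSig_2)E(\ptSig_1,\ptSig_4)$ leaves $F_3$ on the left. On the right, each term is a product of two theta values times a cross-ratio of prime forms.

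The key input is the classical formula \cite[Ch.~II]{Fay} for the $A$-normalized third-kind differential:
\[
\exp\Bigl(\int_{x}^{y}\omega_{b-a}\Bigr)=\frac{E(y,b)\,E(x,a)}{E(y,a)\,E(x,b)} .
\]
It holds because the logarithmic derivative in $y$ of the right-hand side has simple poles at $b$ and $a$ with residues $+1$ and $-1$. It also has vanishing $A$-periods, since $E$ is single-valued along $A$-cycles up to sign. Uniqueness of $\omega_{b-a}$ then identifies the two sides, with the constant fixed by evaluating at $y=x$.

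Applying this with $(x,y,a,b)=(\ptSig_1,\ptSig_3,\ptSig_4,\ptSig_2)$ turns the first cross-ratio into $\exp\int_{\ptSig_1}^{\ptSig_3}\omega_{\ptSig_2-\ptSig_4}$. Applying it with $(\ptSig_2,\ptSig_3,\ptSig_4,\ptSig_1)$ turns the second into $\exp\int_{\ptSig_2}^{\ptSig_3}\omega_{\ptSig_1-\ptSig_4}$. This gives $F_3=F_1+F_2$ after using the antisymmetry $E(x,y)=-E(y,x)$ to match the factors.

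The main obstacle is bookkeeping rather than conceptual. I have to track the signs coming from this antisymmetry, and I have to make sure the multivaluedness is handled consistently. The prime forms and the integrals both depend on the chosen lifts and paths, so the lifts used for the Abel images $\bm\eta_{ij}$, for the prime forms, and for the integration paths must all coincide. I would fix one path system in the universal cover and check that each exponential factor transforms compatibly with the theta factors under deck transformations. Once that is done, any mismatch can only be an overall sign, and that sign would be pinned down by letting $\ptSig_3\to\ptSig_1$ (or $\ptSig_3\to\ptSig_2$), where one side reduces to a trivial identity.
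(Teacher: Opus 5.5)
Your strategy is the one the paper intends. The paper gives no argument beyond citing Fay's (45), and its introductory footnote says the point is to trade prime-form cross-ratios for integrals of third-kind differentials. That is exactly your formula $\exp\int_x^y\omega_{b-a}=\frac{E(y,b)E(x,a)}{E(y,a)E(x,b)}$, which is correct.

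The step that fails is the version of Fay's identity you start from. You attach $E(\ptSig_3,\ptSig_2)E(\ptSig_1,\ptSig_4)$ to $\theta(\vecptjac)\theta(\vecptjac+\bm\eta_{13}+\bm\eta_{24})$, and $E(\ptSig_3,\ptSig_4)E(\ptSig_1,\ptSig_2)$ to $\theta(\vecptjac+\bm\eta_{13})\theta(\vecptjac+\bm\eta_{24})$. This pairing is backwards, and the resulting identity is false for $g\geq 1$; it is a valid Pl\"ucker relation only when $\theta\equiv 1$. To see this, set $\ptSig_3=\ptSig_2$. Your left side vanishes, while your right side becomes $E(\ptSig_1,\ptSig_2)E(\ptSig_2,\ptSig_4)\bigl[\theta(\vecptjac+\bm\eta_{12})\theta(\vecptjac+\bm\eta_{24})-\theta(\vecptjac)\theta(\vecptjac+\bm\eta_{14})\bigr]$. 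This is not identically zero in $\vecptjac$, because the two products have different zero sets.

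Consistently, the matching you assert does not occur. After dividing by $E(\ptSig_3,\ptSig_2)E(\ptSig_1,\ptSig_4)$, your own formula shows the first cross-ratio is $\exp\bigl(-\int_{\ptSig_1}^{\ptSig_3}\omega_{\ptSig_2-\ptSig_4}\bigr)$, not its reciprocal. The second has denominator $E(\ptSig_3,\ptSig_2)E(\ptSig_1,\ptSig_4)$ where $E(\ptSig_3,\ptSig_4)E(\ptSig_2,\ptSig_1)$ is needed. No use of $E(x,y)=-E(y,x)$ can repair this, since it would change which points are paired. So the discrepancy is not an overall sign. Your proposed test $\ptSig_3\to\ptSig_1$ would not detect it, because your version survives that limit; the limit $\ptSig_3\to\ptSig_2$ does detect it.

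The collision limits determine the correct pairing. At $\ptSig_3=\ptSig_1$, two terms acquire the same theta factors $\theta(\vecptjac)\theta(\vecptjac+\bm\eta_{24})$, so the $\bm\eta_{23},\bm\eta_{14}$ term must carry $E(\ptSig_1,\ptSig_3)$. Likewise $\ptSig_3=\ptSig_2$ forces $E(\ptSig_3,\ptSig_2)$ onto the $\bm\eta_{13},\bm\eta_{24}$ term. Finally, $\ptSig_3=\ptSig_4$ forces $E(\ptSig_3,\ptSig_4)$ onto the $\theta(\vecptjac)$ term. This gives
\begin{align*}
&\theta(\vecptjac)\,\theta(\vecptjac+\bm\eta_{13}+\bm\eta_{24})\,E(\ptSig_3,\ptSig_4)E(\ptSig_1,\ptSig_2)\\
&\qquad=\theta(\vecptjac+\bm\eta_{13})\,\theta(\vecptjac+\bm\eta_{24})\,E(\ptSig_3,\ptSig_2)E(\ptSig_1,\ptSig_4)\\
&\qquad\quad+\theta(\vecptjac+\bm\eta_{23})\,\theta(\vecptjac+\bm\eta_{14})\,E(\ptSig_1,\ptSig_3)E(\ptSig_2,\ptSig_4).
\end{align*}
This satisfies all three limits exactly and reduces to the correct Pl\"ucker relation in genus $0$. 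Now divide by $E(\ptSig_3,\ptSig_4)E(\ptSig_1,\ptSig_2)$ and apply your formula with $(x,y,a,b)=(\ptSig_1,\ptSig_3,\ptSig_4,\ptSig_2)$ and with $(\ptSig_2,\ptSig_3,\ptSig_4,\ptSig_1)$. This yields $F_3=F_1+F_2$ directly, with no leftover sign. For the second term you only need the rewriting $E(\ptSig_1,\ptSig_3)/E(\ptSig_1,\ptSig_2)=E(\ptSig_3,\ptSig_1)/E(\ptSig_2,\ptSig_1)$.

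A smaller point concerns the $A$-periods. Trivial $A$-multipliers of $E(\cdot,b)/E(\cdot,a)$ only place the $A$-periods of its logarithmic derivative in $2\pi\imunit\Z$; the integer jumps when the loop crosses $a$ or $b$. So their vanishing is part of the lift and path bookkeeping you describe, not automatic.
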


We now tropicalize \cref{prop:fay_third_kind}. 

\begin{theorem}[Tropical Fay's trisecant identity]
\label{thm:tropical_fay}
Let $(\tropmcurve,\ell,\bm g)$ be an abstract tropical curve arising from
a degeneration as in \cref{sec:abst_trop_curve}, and let
$
\ptSig^\trop_1,\ptSig^\trop_2,\ptSig^\trop_3,\ptSig^\trop_4
\in
\vertices(\tropmcurve)\cap\partial f_0
$
be four vertices on the boundary of the outer face, labeled so that they occur weakly in the cyclic order
\[
\ptSig^\trop_1 \leq \ptSig^\trop_3 \leq \ptSig^\trop_2 \leq \ptSig^\trop_4 \leq \ptSig^\trop_1
\]
along $\partial f_0$. By weakly, we mean that the points are allowed to coincide. Choose an edge $e_0$ on the arc of $\partial f_0$ between $\ptSig_4^\trop$ and $\ptSig_1^\trop$ containing none of the other points, and choose the spanning tree $T$ so that it
contains every edge of $\partial f_0$ except $e_0$. Let $\vecptjac^\trop \in \R^{g^\trop}$ and let 
\[
\begin{aligned}
F_1^\trop
:={}&
\theta^\trop(
\vecptjac^\trop+\bm \eta_{13}^\trop
\mid
\Pi^{\mathrm{deg},\trop}
)
+
\theta^\trop(
\vecptjac^\trop+\bm \eta_{24}^\trop
\mid
\Pi^{\mathrm{deg},\trop}
)
-
\int_{\ptSig_1^\trop}^{\ptSig_3^\trop}
\omega_{\ptSig_2^\trop-\ptSig_4^\trop}^{\mathrm{deg},\trop},
\\
F_2^\trop
:={}&
\theta^\trop(
\vecptjac^\trop+\bm \eta_{23}^\trop
\mid
\Pi^{\mathrm{deg},\trop}
)
+
\theta^\trop(
\vecptjac^\trop+\bm \eta_{14}^\trop
\mid
\Pi^{\mathrm{deg},\trop}
)
-
\int_{\ptSig_2^\trop}^{\ptSig_3^\trop}
\omega_{\ptSig_1^\trop-\ptSig_4^\trop}^{\mathrm{deg},\trop},
\\
F_3^\trop
:={}&
\theta^\trop(
\vecptjac^\trop
\mid
\Pi^{\mathrm{deg},\trop}
)
+
\theta^\trop(
\vecptjac^\trop
+
\bm \eta_{13}^\trop
+
\bm \eta_{24}^\trop
\mid
\Pi^{\mathrm{deg},\trop}
),
\end{aligned}
\]
where $\bm \eta_{ij}^\trop
:=
\mu_{\baseptT}^{\trop}(\ptSig_j^\trop-\ptSig_i^\trop)$, and all tropical integrals are taken along the corresponding unique paths in the cut tropical curve $\tropmcurve_{\mathrm{cut}}$. Then,
\[
F_3^\trop
=
\max
\{
F_1^\trop,
F_2^\trop
\}.
\]
\end{theorem}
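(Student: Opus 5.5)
We realize the tropical data classically and then tropicalize Fay's identity term by term. The point is that the real structure of Harnack curves fixes the signs of $F_1,F_2,F_3$.

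\textbf{Realization.} Since $(\tropmcurve,\ell,\bm g)$ arises from a degeneration, there is a family $\mcurve(t)$ of Harnack curves degenerating to $\mcurve(\infty)$ as in \cref{sec:plumbing}, with degeneration-adapted symplectic basis built from $T$. For each $i$, the vertex $\ptSig_i^\trop$ lies on $\partial f_0$, so its component $\mcurve_{\ptSig_i^\trop}$ meets the outer oval. We choose distinct points $\ptSig_i\in\mcurve^{\mathrm{core}}_{\ptSig_i^\trop}(\infty)$ on the real locus corresponding to the outer oval, and set $\ptSig_i(t)=\Phi(\ptSig_i;t^{-1})\in B_0^{\mathrm{ov}}(t)$. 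We order them along $B_0^{\mathrm{ov}}(t)$ cyclically as $\ptSig_1,\ptSig_3,\ptSig_2,\ptSig_4$. When two tropical points coincide, we choose their lifts in the same core in the required order. We take the basepoint $\basept(t)$ on $B_0^{\mathrm{ov}}(t)$ as well, in a core over $\baseptT$ (which we may assume lies on $\partial f_0$, since the statement is basepoint-independent up to shifting $\vecptjac^\trop$). We choose $\vecptjac(t)=-(0,\vecptjac^\trop)^\top\log t$, which is real in degeneration-adapted coordinates. All integration paths are degeneration-adapted lifts of the unique paths in $\tropmcurve_{\mathrm{cut}}$. Because $e_0\notin T$ and $T$ contains the rest of $\partial f_0$, these paths run along the outer arc avoiding $e_0$, so each lift can be taken inside $B_0^{\mathrm{ov}}(t)$.

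\textbf{Asymptotics.} By \cref{prop:omega_and_delta_pos}, for points on $B_0^{\mathrm{ov}}$ the Abel map is real, so $\bm\eta_{ij}(t)\in\R^g$. By \cref{cor:abelmap}, $\bm\eta_{ij}(t)=-(0,\bm\eta^\trop_{ij})^\top\log t+O(1)$. Every theta argument is therefore real with the asymptotics required by \cref{lem:theta_tropicalization}, so $\log\theta(\cdots)=\theta^\trop(\cdots)\log t+O(1)$ with $\theta(\cdots)>0$. For the third-kind factors, \cref{prop:trop_differential_third_kind} gives
\[\int_{\ptSig_1}^{\ptSig_3}\omega_{\ptSig_2-\ptSig_4}=-\Big(\int_{\ptSig_1^\trop}^{\ptSig_3^\trop}\omega^{\mathrm{deg},\trop}_{\ptSig_2^\trop-\ptSig_4^\trop}\Big)\log t+O(1),\]
and similarly for the $F_2$ factor. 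This requires the integration path to avoid the poles, which the cyclic order ensures: the arc from $\ptSig_1$ to $\ptSig_3$ avoids $\ptSig_2,\ptSig_4$, and the arc from $\ptSig_2$ to $\ptSig_3$ avoids $\ptSig_1,\ptSig_4$.

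\textbf{Signs (the main obstacle).} We must show that the exponentials in $F_1,F_2$ are positive reals, so that $F_1,F_2,F_3>0$. The differential $\omega_{\ptSig_2-\ptSig_4}$ has real residues at real points and vanishing $A^{\mathrm{ov}}$-periods on anti-invariant cycles, so $\sigma^*\overline{\omega}=\omega$. Its integral along a real arc of $B_0^{\mathrm{ov}}$ avoiding the poles is therefore real, and the exponential is positive. The subtle point is consistency of lifts and branches: the path choice for the third-kind integral must be the one compatible with the lifts used in Fay's identity (\cref{prop:fay_third_kind}) and with the $\tropmcurve_{\mathrm{cut}}$ paths. We would check that the real-arc path is homotopic, in the polygon cut along the canonical loops, to the chosen lifts, so that no $2\pi\imunit$ or $\Pi$-period ambiguity arises. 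This is where the condition on $e_0$ and $T$ is used. Once $F_1,F_2>0$, we have $\log F_3=\log(F_1+F_2)=\max(\log F_1,\log F_2)+O(1)$. Dividing by $\log t$ and letting $t\to\infty$ gives $F_3^\trop=\max\{F_1^\trop,F_2^\trop\}$, since each $\log F_i=F_i^\trop\log t+O(1)$.
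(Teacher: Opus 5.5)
Your proposal is correct and follows essentially the same route as the paper. You place real points $\zeta_i(t)$ on the outer oval inside the cores, use the Harnack reality statements (real Abel images on $B_0^{\mathrm{ov}}$, real period matrix, real third-kind differentials integrated along real arcs) to get $F_1,F_2,F_3>0$, apply the Abel-map, theta and third-kind asymptotics, and read off $F_3^\dagger=\max\{F_1^\dagger,F_2^\dagger\}$ from $F_3=F_1+F_2$ with no cancellation. The lift-consistency check you defer is immediate: the paper takes every integration path to be a sub-arc of the single arc of $B_0^{\mathrm{ov}}(t)$ that avoids the collar of $e_0$, and since this arc is contractible, lifting it once to the universal cover gives compatible lifts of all four points for every term of Fay's identity.
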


\begin{proof}

Let
$
\mcurve(t)\rightarrow\mcurve(\infty)
$
be the degeneration as in \cref{sec:degenerate_curve}. For $1\leq i\leq4$,
choose distinct real points
\[
\ptSig_i\in\mcurve_{v_i}^{\mathrm{core}}(\infty)
\]
such that the points
$
\ptSig_i(t)
=
\Phi_{v_i}(\ptSig_i;t^{-1})
$
lie on the outer oval $B_0^{\mathrm{ov}}(t)$ in the same cyclic order, and set
$
\ptSig_i^\trop:=v_i.
$
Set
\[
\bm \eta_{ij}(t)
:=
\int_{\ptSig_i(t)}^{\ptSig_j(t)}
\bm\omega^{\mathrm{deg}}(t).
\]

After cutting at $e_0$, the unique path in $T$ between any two of the
$\ptSig_i^\trop$ is the corresponding arc of
$\partial f_0\setminus e_0$. We choose its degeneration-adapted lift to
be the corresponding arc of $B_0^{\mathrm{ov}}(t)$, and take the
classical integral along this arc. By the cyclic ordering of the marked
points, the arc from $\ptSig_1(t)$ to $\ptSig_3(t)$ is disjoint from
$\ptSig_2(t)$ and $\ptSig_4(t)$, while the arc from $\ptSig_2(t)$ to
$\ptSig_3(t)$ is disjoint from $\ptSig_1(t)$ and $\ptSig_4(t)$.

By
Part~\itemref{BDT1} of \cref{prop:omega_and_delta_pos}, together with the
change-of-basis from the oval-adapted to the degeneration-adapted basis, we have
\[
\bm \eta_{ij}(t)\in\R^g.
\]

Let
$
\vecptjac(t)\in\R^g
$
be a family satisfying
\[
\vecptjac(t)
=
-
\begin{pmatrix}
0\\
\vecptjac^\trop
\end{pmatrix}
\log t
+
O(1)
\]
for some
$
\vecptjac^\trop\in\R^{g^\trop}.
$
Then all the arguments of the theta functions appearing in
\cref{prop:fay_third_kind} are real for $t\gg1$. By \cref{cor:abelmap},
\[
\bm \eta_{ij}(t)
=
-
\begin{pmatrix}
0\\
\bm \eta_{ij}^\trop
\end{pmatrix}
\log t
+
O(1).
\]
Therefore, by \cref{lem:theta_tropicalization},
\[
\begin{aligned}
\log
\theta(
\vecptjac(t)+\bm \eta_{ij}(t)
\mid
\Pi^{\mathrm{deg}}(t)
)
&=
\theta^\trop(
\vecptjac^\trop+\bm \eta_{ij}^\trop
\mid
\Pi^{\mathrm{deg},\trop}
)
\log t
+
O(1),\\
\log
\theta(
\vecptjac(t)
\mid
\Pi^{\mathrm{deg}}(t)
)
&=
\theta^\trop(
\vecptjac^\trop
\mid
\Pi^{\mathrm{deg},\trop}
)
\log t
+
O(1),\\
\log
\theta(
\vecptjac(t)+\bm \eta_{13}(t)+\bm \eta_{24}(t)
\mid
\Pi^{\mathrm{deg}}(t)
)
&=
\theta^\trop(
\vecptjac^\trop
+
\bm \eta_{13}^\trop
+
\bm \eta_{24}^\trop
\mid
\Pi^{\mathrm{deg},\trop}
)
\log t
+
O(1).
\end{aligned}
\]
By \cref{prop:trop_differential_third_kind},
\begin{align*}
\int_{\ptSig_1(t)}^{\ptSig_3(t)}
\omega_{\ptSig_2(t)-\ptSig_4(t)}^{\mathrm{deg}}(t)
&=
-
\left(
\int_{\ptSig_1^\trop}^{\ptSig_3^\trop}
\omega_{\ptSig_2^\trop-\ptSig_4^\trop}^{\mathrm{deg},\trop}
\right)
\log t
+
O(1),\\
\int_{\ptSig_2(t)}^{\ptSig_3(t)}
\omega_{\ptSig_1(t)-\ptSig_4(t)}^{\mathrm{deg}}(t)
&=
-
\left(
\int_{\ptSig_2^\trop}^{\ptSig_3^\trop}
\omega_{\ptSig_1^\trop-\ptSig_4^\trop}^{\mathrm{deg},\trop}
\right)
\log t
+
O(1).
\end{align*}

Let $F_i(t)$, $1\leq i\leq3$, denote the three terms in
\cref{prop:fay_third_kind}. Since the arguments of the theta functions are real and the period matrix is real by Part~\itemref{BDT2} of \cref{prop:omega_and_delta_pos} together with change-of-basis, every term in the series expansions of the theta functions are positive, and therefore the theta functions are also positive. Moreover, the exponential factors are also positive since the third kind differentials are real and we are integrating along real paths. Therefore, $F_1(t),F_2(t),F_3(t)>0$. Combining the asymptotics above gives
\[
\log F_i(t)
=
F_i^\trop\log t
+
O(1),
\qquad
1\leq i\leq3.
\]
Since each $F_i(t)$ is positive, there is no cancellation of leading terms and so
\[
F_3^\trop
=
\max
\{
F_1^\trop,
F_2^\trop
\}.
\]
\end{proof}

	\bibliographystyle{alpha}
	\bibliography{biblio}

@misc{IchikawaDegenerating,
  author = {Ichikawa, Takashi},
  title  = {Dimers for degenerating families of {M}-curves},
  year   = {2026},
  note   = {\arxiv{2601.18093}}
}

@misc{IchikawaHarmonicAmoebas,
  author = {Ichikawa, Takashi},
  title  = {General tropical convergence of harmonic amoebas},
  year   = {2026},
  note   = {\arxiv{2601.18180}}
}

@article {AFMS,
    AUTHOR = {Agostini, Daniele and Fevola, Claudia and Mandelshtam, Yelena
              and Sturmfels, Bernd},
     TITLE = {K{P} solitons from tropical limits},
   JOURNAL = {J. Symbolic Comput.},
  FJOURNAL = {Journal of Symbolic Computation},
    VOLUME = {114},
      YEAR = {2023},
     PAGES = {282--301},
      ISSN = {0747-7171,1095-855X},
   MRCLASS = {37K10 (14H42 35C08)},
  MRNUMBER = {4421050},
MRREVIEWER = {Arthemy\ V.\ Kiselev},
       DOI = {10.1016/j.jsc.2022.04.009},
       URL = {https://doi.org/10.1016/j.jsc.2022.04.009},
}

@article {thetasurfaces,
    AUTHOR = {Agostini, Daniele and \c{C}elik, T\"{u}rk\"{u} \"{O}zl\"{u}m
              and Struwe, Julia and Sturmfels, Bernd},
     TITLE = {Theta surfaces},
   JOURNAL = {Vietnam J. Math.},
  FJOURNAL = {Vietnam Journal of Mathematics},
    VOLUME = {49},
      YEAR = {2021},
    NUMBER = {2},
     PAGES = {319--347},
      ISSN = {2305-221X,2305-2228},
   MRCLASS = {14H42 (14K25 30F99)},
  MRNUMBER = {4287917},
MRREVIEWER = {Sanjay\ Kumar\ Singh},
       DOI = {10.1007/s10013-020-00443-x},
       URL = {https://doi.org/10.1007/s10013-020-00443-x},
}

@article {ichikwaCMP,
    AUTHOR = {Ichikawa, Takashi},
     TITLE = {Periods of tropical curves and associated {KP} solutions},
   JOURNAL = {Comm. Math. Phys.},
  FJOURNAL = {Communications in Mathematical Physics},
    VOLUME = {402},
      YEAR = {2023},
    NUMBER = {2},
     PAGES = {1707--1723},
      ISSN = {0010-3616,1432-0916},
   MRCLASS = {14T20 (14H42 30F99 37K10)},
  MRNUMBER = {4627329},
MRREVIEWER = {Simonetta\ Abenda},
       DOI = {10.1007/s00220-023-04757-y},
       URL = {https://doi.org/10.1007/s00220-023-04757-y},
}

@article {HubbardKoch,
    AUTHOR = {Hubbard, John H. and Koch, Sarah},
     TITLE = {An analytic construction of the {D}eligne-{M}umford
              compactification of the moduli space of curves},
   JOURNAL = {J. Differential Geom.},
  FJOURNAL = {Journal of Differential Geometry},
    VOLUME = {98},
      YEAR = {2014},
    NUMBER = {2},
     PAGES = {261--313},
      ISSN = {0022-040X,1945-743X},
   MRCLASS = {32G15 (14H10)},
  MRNUMBER = {3263519},
MRREVIEWER = {Milagros\ Izquierdo},
       URL = {http://projecteuclid.org/euclid.jdg/1406552251},
}

@article {ACS,
    AUTHOR = {Abramovich, Dan and Caporaso, Lucia and Payne, Sam},
     TITLE = {The tropicalization of the moduli space of curves},
   JOURNAL = {Ann. Sci. \'{E}c. Norm. Sup\'{e}r. (4)},
  FJOURNAL = {Annales Scientifiques de l'\'{E}cole Normale Sup\'{e}rieure.
              Quatri\`eme S\'{e}rie},
    VOLUME = {48},
      YEAR = {2015},
    NUMBER = {4},
     PAGES = {765--809},
      ISSN = {0012-9593,1873-2151},
   MRCLASS = {14T05 (14D20 14G22 14H10)},
  MRNUMBER = {3377065},
MRREVIEWER = {Simon\ Hampe},
       DOI = {10.24033/asens.2258},
       URL = {https://doi.org/10.24033/asens.2258},
}

@book {MaclaganSturmfels,
    AUTHOR = {Maclagan, Diane and Sturmfels, Bernd},
     TITLE = {Introduction to tropical geometry},
    SERIES = {Graduate Studies in Mathematics},
    VOLUME = {161},
 PUBLISHER = {American Mathematical Society, Providence, RI},
      YEAR = {2015},
     PAGES = {xii+363},
      ISBN = {978-0-8218-5198-2},
   MRCLASS = {14T05 (05B35 14M25 15A80 52B70)},
  MRNUMBER = {3287221},
MRREVIEWER = {Patrick Popescu-Pampu},
       DOI = {10.1090/gsm/161},
       URL = {https://doi.org/10.1090/gsm/161},
}

@book {GKZ,
    AUTHOR = {Gelfand, I. M. and Kapranov, M. M. and Zelevinsky, A. V.},
     TITLE = {Discriminants, resultants and multidimensional determinants},
    SERIES = {Modern Birkh\"{a}user Classics},
      NOTE = {Reprint of the 1994 edition},
 PUBLISHER = {Birkh\"{a}user Boston, Inc., Boston, MA},
      YEAR = {2008},
     PAGES = {x+523},
      ISBN = {978-0-8176-4770-4},
   MRCLASS = {14N05 (13D25 14M25 15A15 52B20)},
  MRNUMBER = {2394437},
}

@incollection {MikhalkinAmoebas,
    AUTHOR = {Mikhalkin, Grigory},
     TITLE = {Amoebas of algebraic varieties and tropical geometry},
 BOOKTITLE = {Different faces of geometry},
    SERIES = {Int. Math. Ser. (N. Y.)},
    VOLUME = {3},
     PAGES = {257--300},
 PUBLISHER = {Kluwer/Plenum, New York},
      YEAR = {2004},
   MRCLASS = {14P25 (14N10 14N35)},
  MRNUMBER = {2102998},
MRREVIEWER = {Jean-Yves Welschinger},
       DOI = {10.1007/0-306-48658-X\_6},
       URL = {https://doi.org/10.1007/0-306-48658-X_6},
}

@article {MikhalkinReal,
    AUTHOR = {Mikhalkin, G.},
     TITLE = {Real algebraic curves, the moment map and amoebas},
   JOURNAL = {Ann. of Math. (2)},
  FJOURNAL = {Annals of Mathematics. Second Series},
    VOLUME = {151},
      YEAR = {2000},
    NUMBER = {1},
     PAGES = {309--326},
      ISSN = {0003-486X},
   MRCLASS = {14P05},
  MRNUMBER = {1745011},
MRREVIEWER = {A. Tognoli},
       DOI = {10.2307/121119},
       URL = {https://doi.org/10.2307/121119},
}

@article {MikhalkinRullgaard,
    AUTHOR = {Mikhalkin, Grigory and Rullg\aa rd, Hans},
     TITLE = {Amoebas of maximal area},
   JOURNAL = {Internat. Math. Res. Notices},
  FJOURNAL = {International Mathematics Research Notices},
      YEAR = {2001},
    NUMBER = {9},
     PAGES = {441--451},
      ISSN = {1073-7928},
   MRCLASS = {14P99 (52A40)},
  MRNUMBER = {1829380},
MRREVIEWER = {Alexander Zvonkin},
       DOI = {10.1155/S107379280100023X},
       URL = {https://doi.org/10.1155/S107379280100023X},
}

@article {KenyonOkounkovHarnack,
    AUTHOR = {Kenyon, Richard and Okounkov, Andrei},
     TITLE = {Planar dimers and {H}arnack curves},
   JOURNAL = {Duke Math. J.},
  FJOURNAL = {Duke Mathematical Journal},
    VOLUME = {131},
      YEAR = {2006},
    NUMBER = {3},
     PAGES = {499--524},
      ISSN = {0012-7094},
   MRCLASS = {14H81 (14H50 82B23)},
  MRNUMBER = {2219249},
MRREVIEWER = {Scott Sheffield},
       DOI = {10.1215/S0012-7094-06-13134-4},
       URL = {https://doi.org/10.1215/S0012-7094-06-13134-4},
}

@article {HuNorton,
    AUTHOR = {Hu, Xuntao and Norton, Chaya},
     TITLE = {General variational formulas for {A}belian differentials},
   JOURNAL = {Int. Math. Res. Not. IMRN},
  FJOURNAL = {International Mathematics Research Notices. IMRN},
      YEAR = {2020},
    NUMBER = {12},
     PAGES = {3540--3581},
      ISSN = {1073-7928},
   MRCLASS = {32G20 (14H15 30F30)},
  MRNUMBER = {4120304},
       DOI = {10.1093/imrn/rny106},
       URL = {https://doi.org/10.1093/imrn/rny106},
}

@book {Tata1,
    AUTHOR = {Mumford, David},
     TITLE = {Tata lectures on theta. {I}},
    SERIES = {Modern Birkh\"{a}user Classics},
      NOTE = {With the collaboration of C. Musili, M. Nori, E. Previato and
              M. Stillman,
              Reprint of the 1983 edition},
 PUBLISHER = {Birkh\"{a}user Boston, Inc., Boston, MA},
      YEAR = {2007},
     PAGES = {xiv+235},
      ISBN = {978-0-8176-4572-4; 0-8176-4572-1},
   MRCLASS = {14K25 (11E45 11G10 14C30)},
  MRNUMBER = {2352717},
       DOI = {10.1007/978-0-8176-4578-6},
       URL = {https://doi.org/10.1007/978-0-8176-4578-6},
}

@article {KOS,
    AUTHOR = {Kenyon, Richard and Okounkov, Andrei and Sheffield, Scott},
     TITLE = {Dimers and amoebae},
   JOURNAL = {Ann. of Math. (2)},
  FJOURNAL = {Annals of Mathematics. Second Series},
    VOLUME = {163},
      YEAR = {2006},
    NUMBER = {3},
     PAGES = {1019--1056},
      ISSN = {0003-486X},
   MRCLASS = {60D05 (82B26 82B41)},
  MRNUMBER = {2215138},
MRREVIEWER = {Michael Pr\"{a}hofer},
       DOI = {10.4007/annals.2006.163.1019},
       URL = {https://doi.org/10.4007/annals.2006.163.1019},
}

@article {KO07b,
    AUTHOR = {Kenyon, Richard and Okounkov, Andrei},
     TITLE = {Limit shapes and the complex {B}urgers equation},
   JOURNAL = {Acta Math.},
  FJOURNAL = {Acta Mathematica},
    VOLUME = {199},
      YEAR = {2007},
    NUMBER = {2},
     PAGES = {263--302},
      ISSN = {0001-5962},
   MRCLASS = {60D05 (35Q53 52B99 82B23 82B41)},
  MRNUMBER = {2358053},
MRREVIEWER = {Eugenii Shustin},
       DOI = {10.1007/s11511-007-0021-0},
       URL = {https://doi.org/10.1007/s11511-007-0021-0},
}

@phdthesis{Ma2015,
  author       = {Ningning Ma},
  title        = {Tropicalization of the Dimer Model and Cluster Integrable Systems},
  school       = {Brown University},
  year         = {2015},
  advisor      = {Richard Kenyon},
  type         = {PhD thesis},
  url          = {https://repository.library.brown.edu/}
}

@article {GalashinGeorge2024,
    AUTHOR = {Galashin, Pavel and George, Terrence},
     TITLE = {Move-reduced graphs on a torus},
   JOURNAL = {Trans. Amer. Math. Soc.},
  FJOURNAL = {Transactions of the American Mathematical Society},
    VOLUME = {377},
      YEAR = {2024},
    NUMBER = {6},
     PAGES = {4055--4099},
      ISSN = {0002-9947},
   MRCLASS = {05C10 (13F60)},
  MRNUMBER = {4748614},
       DOI = {10.1090/tran/9168},
       URL = {https://doi.org/10.1090/tran/9168},
}

@article {bocklandt2015dimerabc,
    AUTHOR = {Bocklandt, Raf},
     TITLE = {A dimer {ABC}},
   JOURNAL = {Bull. Lond. Math. Soc.},
  FJOURNAL = {Bulletin of the London Mathematical Society},
    VOLUME = {48},
      YEAR = {2016},
    NUMBER = {3},
     PAGES = {387--451},
      ISSN = {0024-6093},
   MRCLASS = {82B20 (13F60 14J33 37B99 82B26 82D60)},
  MRNUMBER = {3509904},
       DOI = {10.1112/blms/bdv101},
       URL = {https://doi.org/10.1112/blms/bdv101},
}

@article {InoueTakenawaSpectral,
    AUTHOR = {Inoue, Rei and Takenawa, Tomoyuki},
     TITLE = {Tropical spectral curves and integrable cellular automata},
   JOURNAL = {Int. Math. Res. Not. IMRN},
  FJOURNAL = {International Mathematics Research Notices. IMRN},
      YEAR = {2008},
    NUMBER = {9},
     PAGES = {Art ID. rnn019, 27},
      ISSN = {1073-7928,1687-0247},
   MRCLASS = {37K20 (14H70 37B15 37J35 37K60 68Q80)},
  MRNUMBER = {2429250},
MRREVIEWER = {Bruno\ Fabre},
       DOI = {10.1093/imrn/rnn019},
       URL = {https://doi.org/10.1093/imrn/rnn019},
}

@article{GoodwillieIgusaMalkiewichMerling,
  author  = {Goodwillie, Thomas and Igusa, Kiyoshi and
             Malkiewich, Cary and Merling, Mona},
  title   = {On the functoriality of the space of equivariant smooth
             {$h$}-cobordisms},
  eprint  = {2303.14892},
  archivePrefix = {arXiv},
  primaryClass = {math.AT},
  year    = {2023}
}

@article{BocklandtStrebel,
  author = {Bocklandt, Raf},
  title  = {Strebel Differentials and stable Matrix Factorizations},
  year   = {2016},
  eprint = {1612.06800},
  archivePrefix = {arXiv},
  primaryClass = {math.RT}
}

@article {Olarte,
    AUTHOR = {Olarte, Jorge Alberto},
     TITLE = {The moduli space of {H}arnack curves in toric surfaces},
   JOURNAL = {Forum Math. Sigma},
  FJOURNAL = {Forum of Mathematics. Sigma},
    VOLUME = {9},
      YEAR = {2021},
     PAGES = {Paper No. e43, 26},
      ISSN = {2050-5094},
   MRCLASS = {14M25 (14H10)},
  MRNUMBER = {4266669},
MRREVIEWER = {Boulos\ El Hilany},
       DOI = {10.1017/fms.2021.37},
       URL = {https://doi.org/10.1017/fms.2021.37},
}

@incollection {InoueTakenawaJacobian,
    AUTHOR = {Inoue, Rei and Takenawa, Tomoyuki},
     TITLE = {Tropical {J}acobian and the generic fiber of the
              ultra-discrete periodic {T}oda lattice are isomorphic},
 BOOKTITLE = {Expansion of integrable systems},
    SERIES = {RIMS K\^{o}ky\^{u}roku Bessatsu, B13},
     PAGES = {175--190},
 PUBLISHER = {Res. Inst. Math. Sci. (RIMS), Kyoto},
      YEAR = {2009},
   MRCLASS = {37K20 (14T05 37J35)},
  MRNUMBER = {2642635},
MRREVIEWER = {Andrey\ E.\ Mironov},
}

@article {InoueTakenawaFay,
    AUTHOR = {Inoue, Rei and Takenawa, Tomoyuki},
     TITLE = {A tropical analogue of {F}ay's trisecant identity and the
              ultra-discrete periodic {T}oda lattice},
   JOURNAL = {Comm. Math. Phys.},
  FJOURNAL = {Communications in Mathematical Physics},
    VOLUME = {289},
      YEAR = {2009},
    NUMBER = {3},
     PAGES = {995--1021},
      ISSN = {0010-3616,1432-0916},
   MRCLASS = {14H70 (14H40 14K25 14T05 37K20 37K60)},
  MRNUMBER = {2511658},
MRREVIEWER = {Eugenii\ Shustin},
       DOI = {10.1007/s00220-009-0815-3},
       URL = {https://doi.org/10.1007/s00220-009-0815-3},
}

@article {Iwao,
    AUTHOR = {Iwao, Shinsuke},
     TITLE = {Integration over tropical plane curves and
              ultradiscretization},
   JOURNAL = {Int. Math. Res. Not. IMRN},
  FJOURNAL = {International Mathematics Research Notices. IMRN},
      YEAR = {2010},
    NUMBER = {1},
     PAGES = {112--148},
      ISSN = {1073-7928},
   MRCLASS = {14T05 (14H50)},
  MRNUMBER = {2576286},
MRREVIEWER = {Hannah Markwig},
       DOI = {10.1093/imrn/rnp129},
       URL = {https://doi.org/10.1093/imrn/rnp129},
}

@incollection {InoueIwao2011,
    AUTHOR = {Inoue, Rei and Iwao, Shinsuke},
     TITLE = {Tropical spectral curves, {F}ay's trisecant identity, and
              generalized ultradiscrete {T}oda lattice},
 BOOKTITLE = {New trends in quantum integrable systems},
     PAGES = {101--116},
 PUBLISHER = {World Sci. Publ., Hackensack, NJ},
      YEAR = {2011},
   MRCLASS = {14H70 (14T05 37K20 37K60)},
  MRNUMBER = {2766982},
MRREVIEWER = {Andrey E. Mironov},
       DOI = {10.1142/9789814324373\_0006},
       URL = {https://doi.org/10.1142/9789814324373_0006},
}

@incollection {MZ,
    AUTHOR = {Mikhalkin, Grigory and Zharkov, Ilia},
     TITLE = {Tropical curves, their {J}acobians and theta functions},
 BOOKTITLE = {Curves and abelian varieties},
    SERIES = {Contemp. Math.},
    VOLUME = {465},
     PAGES = {203--230},
 PUBLISHER = {Amer. Math. Soc., Providence, RI},
      YEAR = {2008},
   MRCLASS = {14T05 (05C38 14H40 14H42)},
  MRNUMBER = {2457739},
       DOI = {10.1090/conm/465/09104},
       URL = {https://doi.org/10.1090/conm/465/09104},
}

@article {KS,
    AUTHOR = {Kenyon, Richard W. and Sheffield, Scott},
     TITLE = {Dimers, tilings and trees},
   JOURNAL = {J. Combin. Theory Ser. B},
  FJOURNAL = {Journal of Combinatorial Theory. Series B},
    VOLUME = {92},
      YEAR = {2004},
    NUMBER = {2},
     PAGES = {295--317},
      ISSN = {0095-8956},
   MRCLASS = {60C05 (05B45 05C10 60J10 60J20)},
  MRNUMBER = {2099145},
MRREVIEWER = {Ralph Neininger},
       DOI = {10.1016/j.jctb.2004.07.001},
       URL = {https://doi.org/10.1016/j.jctb.2004.07.001},
}

@article {BDdT,
    AUTHOR = {Boutillier, C\'{e}dric and Cimasoni, David and de Tili\`ere,
              B\'{e}atrice},
     TITLE = {Minimal bipartite dimers and higher genus {H}arnack curves},
   JOURNAL = {Probab. Math. Phys.},
  FJOURNAL = {Probability and Mathematical Physics},
    VOLUME = {4},
      YEAR = {2023},
    NUMBER = {1},
     PAGES = {151--208},
      ISSN = {2690-0998},
   MRCLASS = {82B20 (05C10 30F99)},
  MRNUMBER = {4567400},
MRREVIEWER = {N. N. Ganikhodjaev},
       DOI = {10.2140/pmp.2023.4.151},
       URL = {https://doi.org/10.2140/pmp.2023.4.151},
}

@book {Fay,
    AUTHOR = {Fay, John D.},
     TITLE = {Theta functions on {R}iemann surfaces},
    SERIES = {Lecture Notes in Mathematics, Vol. 352},
 PUBLISHER = {Springer-Verlag, Berlin-New York},
      YEAR = {1973},
     PAGES = {iv+137},
   MRCLASS = {30A48 (14H15)},
  MRNUMBER = {335789},
MRREVIEWER = {H. M. Farkas},
}

@article {GGK,
    AUTHOR = {George, T. and Goncharov, A. B. and Kenyon, R.},
     TITLE = {The inverse spectral map for dimers},
   JOURNAL = {Math. Phys. Anal. Geom.},
  FJOURNAL = {Mathematical Physics, Analysis and Geometry. An International
              Journal Devoted to the Theory and Applications of Analysis and
              Geometry to Physics},
    VOLUME = {26},
      YEAR = {2023},
    NUMBER = {3},
     PAGES = {Paper No. 24, 51},
      ISSN = {1385-0172},
   MRCLASS = {82B20 (13F60 14M25 37J38)},
  MRNUMBER = {4640328},
       DOI = {10.1007/s11040-023-09466-5},
       URL = {https://doi.org/10.1007/s11040-023-09466-5},
}

@article {GK,
    AUTHOR = {Goncharov, Alexander B. and Kenyon, Richard},
     TITLE = {Dimers and cluster integrable systems},
   JOURNAL = {Ann. Sci. \'{E}c. Norm. Sup\'{e}r. (4)},
  FJOURNAL = {Annales Scientifiques de l'\'{E}cole Normale Sup\'{e}rieure. Quatri\`eme
              S\'{e}rie},
    VOLUME = {46},
      YEAR = {2013},
    NUMBER = {5},
     PAGES = {747--813},
      ISSN = {0012-9593},
   MRCLASS = {37K30 (05C10 30F60 53D17 82B23)},
  MRNUMBER = {3185352},
MRREVIEWER = {Guizhang Tu},
       DOI = {10.24033/asens.2201},
       URL = {https://doi.org/10.24033/asens.2201},
}

@article {FoGo_cluster,
    AUTHOR = {Fock, Vladimir V. and Goncharov, Alexander B.},
     TITLE = {Cluster ensembles, quantization and the dilogarithm},
   JOURNAL = {Ann. Sci. \'{E}c. Norm. Sup\'{e}r. (4)},
  FJOURNAL = {Annales Scientifiques de l'\'{E}cole Normale Sup\'{e}rieure. Quatri\`eme
              S\'{e}rie},
    VOLUME = {42},
      YEAR = {2009},
    NUMBER = {6},
     PAGES = {865--930},
      ISSN = {0012-9593},
   MRCLASS = {53D30 (13F60 30F60 32G15 33B30 53D55)},
  MRNUMBER = {2567745},
MRREVIEWER = {Athanase Papadopoulos},
       DOI = {10.24033/asens.2112},
       URL = {https://doi.org/10.24033/asens.2112},
}

@article {FZ,
    AUTHOR = {Fomin, Sergey and Zelevinsky, Andrei},
     TITLE = {Cluster algebras. {I}. {F}oundations},
   JOURNAL = {J. Amer. Math. Soc.},
  FJOURNAL = {Journal of the American Mathematical Society},
    VOLUME = {15},
      YEAR = {2002},
    NUMBER = {2},
     PAGES = {497--529 (electronic)},
      ISSN = {0894-0347},
   MRCLASS = {16S99 (14M99 17B99)},
  MRNUMBER = {1887642},
MRREVIEWER = {Eric N. Sommers},
       DOI = {10.1090/S0894-0347-01-00385-X},
       URL = {http://dx.doi.org/10.1090/S0894-0347-01-00385-X},
}

@Article{postnikov2006total,
  author       = {Alexander Postnikov},
  title        = {{Total positivity, Grassmannians, and networks}},
  journal      = {\arxiv{math/0609764v1}},
  year         = 2006
}

@Article{Fock,
  author       = {V. V. Fock},
  title        = {{Inverse spectral problem for GK integrable system}},
  journal      = {\arxiv{1503.00289v1}},
  year         = 2015
}

@Article{BerggrenBorodinGeorge,
  author       = {Tomas Berggren and Alexei Borodin and Terrence George},
  title        = {{Dimer models on astroidal zig-zag graphs}},
  journal      = {\arxiv{2605.03896}},
  year         = 2026
}

@article {Berggren-Borodin-tropical,
    AUTHOR = {Berggren, Tomas and Borodin, Alexei},
     TITLE = {Crystallization of the {A}ztec diamond},
   JOURNAL = {Adv. Math.},
  FJOURNAL = {Advances in Mathematics},
    VOLUME = {500},
      YEAR = {2026},
     PAGES = {Paper No. 111100, 61},
      ISSN = {0001-8708,1090-2082},
   MRCLASS = {82B20 (14T90)},
  MRNUMBER = {5086408},
       DOI = {10.1016/j.aim.2026.111100},
       URL = {https://doi.org/10.1016/j.aim.2026.111100},
}

@article {Berggren-Borodin-Aztec,
    AUTHOR = {Berggren, Tomas and Borodin, Alexei},
     TITLE = {Geometry of the doubly periodic {A}ztec dimer model},
   JOURNAL = {Commun. Am. Math. Soc.},
  FJOURNAL = {Communications of the American Mathematical Society},
    VOLUME = {5},
      YEAR = {2025},
     PAGES = {475--570},
      ISSN = {2692-3688},
   MRCLASS = {60G55 (14H70 41A60 60B10 60D05 60K35 82B20)},
  MRNUMBER = {4954940},
MRREVIEWER = {Thomas\ Polaski},
       DOI = {10.1090/cams/52},
       URL = {https://doi.org/10.1090/cams/52},
}

@Article{BoutillierTiliereAztec,
  author       = {C{\'e}dric Boutillier and B{\'e}atrice de Tili{\`e}re},
  title        = {{Fock's dimer model on the Aztec diamond}},
  journal      = {Ann. Inst. Henri Poincar{\'e} Comb. Phys. Interact.},
  year         = {2026},
  doi          = {10.4171/AIHPD/223}
}

@incollection {InoueIwao,
    AUTHOR = {Inoue, Rei and Iwao, Shinsuke},
     TITLE = {Tropical curves and integrable piecewise linear maps},
 BOOKTITLE = {Tropical geometry and integrable systems},
    SERIES = {Contemp. Math.},
    VOLUME = {580},
     PAGES = {21--39},
 PUBLISHER = {Amer. Math. Soc., Providence, RI},
      YEAR = {2012},
      ISBN = {978-0-8218-7553-7},
   MRCLASS = {14T05 (14H99)},
  MRNUMBER = {2985385},
MRREVIEWER = {Hsian-Hua\ Tseng},
       DOI = {10.1090/conm/580/11489},
       URL = {https://doi.org/10.1090/conm/580/11489},
}

@article {InoueKunibaTakagi,
    AUTHOR = {Inoue, Rei and Kuniba, Atsuo and Takagi, Taichiro},
     TITLE = {Integrable structure of box-ball systems: crystal, {B}ethe
              ansatz, ultradiscretization and tropical geometry},
   JOURNAL = {J. Phys. A},
  FJOURNAL = {Journal of Physics. A. Mathematical and Theoretical},
    VOLUME = {45},
      YEAR = {2012},
    NUMBER = {7},
     PAGES = {073001, 64},
      ISSN = {1751-8113,1751-8121},
   MRCLASS = {37K60 (14H70 14T05 37B15 37J35 37K20 82B23)},
  MRNUMBER = {2892302},
MRREVIEWER = {Eugenii\ Shustin},
       DOI = {10.1088/1751-8113/45/7/073001},
       URL = {https://doi.org/10.1088/1751-8113/45/7/073001},
}

@article {InoueLamPylyavskyy,
    AUTHOR = {Inoue, Rei and Lam, Thomas and Pylyavskyy, Pavlo},
     TITLE = {Toric networks, geometric {$R$}-matrices and generalized
              discrete {T}oda lattices},
   JOURNAL = {Comm. Math. Phys.},
  FJOURNAL = {Communications in Mathematical Physics},
    VOLUME = {347},
      YEAR = {2016},
    NUMBER = {3},
     PAGES = {799--855},
      ISSN = {0010-3616,1432-0916},
   MRCLASS = {37K10 (14H40 17B37 17B80)},
  MRNUMBER = {3551255},
       DOI = {10.1007/s00220-016-2739-z},
       URL = {https://doi.org/10.1007/s00220-016-2739-z},
}

@article {SpeyerWilliams1,
    AUTHOR = {Speyer, David and Williams, Lauren},
     TITLE = {The tropical totally positive {G}rassmannian},
   JOURNAL = {J. Algebraic Combin.},
  FJOURNAL = {Journal of Algebraic Combinatorics. An International Journal},
    VOLUME = {22},
      YEAR = {2005},
    NUMBER = {2},
     PAGES = {189--210},
      ISSN = {0925-9899,1572-9192},
   MRCLASS = {14P99 (05E25 14M15)},
  MRNUMBER = {2164397},
MRREVIEWER = {Meirav\ Amram},
       DOI = {10.1007/s10801-005-2513-3},
       URL = {https://doi.org/10.1007/s10801-005-2513-3},
}

@article {SpeyerWilliams2,
    AUTHOR = {Speyer, David and Williams, Lauren K.},
     TITLE = {The positive {D}ressian equals the positive tropical
              {G}rassmannian},
   JOURNAL = {Trans. Amer. Math. Soc. Ser. B},
  FJOURNAL = {Transactions of the American Mathematical Society. Series B},
    VOLUME = {8},
      YEAR = {2021},
     PAGES = {330--353},
      ISSN = {2330-0000},
   MRCLASS = {14T15 (05E14 14M15 52C40)},
  MRNUMBER = {4241765},
MRREVIEWER = {Margherita\ Barile},
       DOI = {10.1090/btran/67},
       URL = {https://doi.org/10.1090/btran/67},
}

@article {ArkaniHamedLamSpradlin,
    AUTHOR = {Arkani-Hamed, Nima and Lam, Thomas and Spradlin, Marcus},
     TITLE = {Positive configuration space},
   JOURNAL = {Comm. Math. Phys.},
  FJOURNAL = {Communications in Mathematical Physics},
    VOLUME = {384},
      YEAR = {2021},
    NUMBER = {2},
     PAGES = {909--954},
      ISSN = {0010-3616,1432-0916},
   MRCLASS = {14M15 (05E14 14T15 52B20 52B40 55R80)},
  MRNUMBER = {4259378},
MRREVIEWER = {Felipe\ Zald\'{\i}var},
       DOI = {10.1007/s00220-021-04041-x},
       URL = {https://doi.org/10.1007/s00220-021-04041-x},
}

@unpublished{GalashinGeorge2,
  author = {Galashin, Pavel and George, Terrence},
  note   = {In preparation},
}

@article {Tourkine,
    AUTHOR = {Tourkine, Piotr},
     TITLE = {Tropical amplitudes},
   JOURNAL = {Ann. Henri Poincar\'{e}},
  FJOURNAL = {Annales Henri Poincar\'{e}. A Journal of Theoretical and
              Mathematical Physics},
    VOLUME = {18},
      YEAR = {2017},
    NUMBER = {6},
     PAGES = {2199--2249},
      ISSN = {1424-0637,1424-0661},
   MRCLASS = {81T30},
  MRNUMBER = {3649455},
       DOI = {10.1007/s00023-017-0560-7},
       URL = {https://doi.org/10.1007/s00023-017-0560-7},
}

@article {Bob2,
    AUTHOR = {Bobenko, Alexander I. and Bobenko, Nikolai},
     TITLE = {Dimers and {M}-curves: {L}imit shapes from {R}iemann surfaces},
   JOURNAL = {Duke Math. J.},
  FJOURNAL = {Duke Mathematical Journal},
    VOLUME = {175},
      YEAR = {2026},
    NUMBER = {10},
     PAGES = {1861--1930},
      ISSN = {0012-7094,1547-7398},
   MRCLASS = {82B20 (14H81 30F30 35 49Q10 82B26)},
  MRNUMBER = {5109905},
       DOI = {10.1215/00127094-2025-0068},
       URL = {https://doi.org/10.1215/00127094-2025-0068},
}
\end{document}